\tikzset{%
  highlight/.style={rectangle,rounded corners,fill=red!15,draw,
    fill opacity=0.5,thick,inner sep=0pt}
}
\newcommand{\tikzmark}[2]{\tikz[overlay,remember picture,
  baseline=(#1.base)] \node (#1) {#2};}
\newcommand{\Highlight}[1][submatrix]{%
    \tikz[overlay,remember picture]{
    \node[highlight,fit=(left.north west) (right.south east)] (#1) {};}
}
\newcommand{\bE}{\mathbf{E}}
\newcommand{\bI}{\mathbf{I}}
\newcommand{\bc}{\mathbf{c}}
\newcommand{\uc}{\underline{c}}
\newcommand{\ue}{\underline{e}}
\newcommand{\um}{\underline{m}}
\newcommand{\un}{\underline{n}}
\newcommand{\ut}{\underline{t}}
\newcommand{\umu}{\underline{\mu}}
\newcommand{\unu}{\underline{\nu}}
\newcommand{\NN}{\mathbb{N}}
\newcommand{\ZZ}{\mathbb{Z}}
\newcommand{\RR}{\mathbb{R}}
\newcommand{\CC}{\mathbb{C}}
\newcommand{\PP}{\mathbb{P}}
\newcommand{\TT}{\mathbb{T}}
\newcommand{\cA}{\mathcal{A}}
\newcommand{\cC}{\mathcal{C}}
\newcommand{\cO}{\mathcal{O}}
\newcommand{\cE}{\mathcal{E}}
\newcommand{\cK}{\mathcal{K}}
\newcommand{\cH}{\mathcal{H}}
\newcommand{\cT}{\mathcal{T}}
\renewcommand{\l}{\ell}
\DeclareMathOperator{\Cl}{Cl}
\DeclareMathOperator{\Div}{div}
\DeclareMathOperator{\coker}{coker}
\DeclareMathOperator{\codim}{codim}
\DeclareMathOperator{\cone}{cone}
\DeclareMathOperator{\Hom}{Hom}
\DeclareMathOperator{\im}{Im}
\DeclareMathOperator{\Spec}{Spec}
\DeclareMathOperator{\rk}{rk}
\DeclareMathOperator{\reg}{reg}
\DeclareMathOperator{\ri}{r.i.}
\DeclareMathOperator{\supp}{supp}
\theoremstyle{definition}
\newtheorem{definition}{Definition}[section]
\newtheorem{example}[definition]{Example}
\newtheorem{remark}[definition]{Remark}
\newtheorem{notation}[definition]{Notation}
\theoremstyle{plain}
\newtheorem{proposition}[definition]{Proposition}
\newtheorem{corollary}[definition]{Corollary}
\newtheorem{theorem}[definition]{Theorem}
\newtheorem{lemma}[definition]{Lemma}
\newcommand{\red}[1]{{\color{red} \sf $\clubsuit$  [#1]}}
\begin{document}

\title{Multigraded Castelnuovo-Mumford regularity via Klyachko filtrations}

\author[Rosa M. Mir\'o-Roig]{Rosa M. Mir\'o-Roig}
\address{Department de matem\`{a}tiques i Inform\`{a}tica, Universitat de Barcelona, Gran Via de les Corts Catalanes 585, 08007 Barcelona,
Spain}
\email{miro@ub.edu}

\author[Marti Salat-Molt\'o]{Mart\'i Salat-Molt\'o}
\address{Department de matem\`{a}tiques i Inform\`{a}tica, Universitat de Barcelona, Gran Via de les Corts Catalanes 585, 08007 Barcelona,
Spain}
\email{marti.salat@ub.edu}

\begin{abstract}
In this paper, we consider $\ZZ^{r}-$graded modules on the $\Cl(X)$ $-$graded Cox ring $\CC[x_{1},\dotsc,x_{r}]$ of a smooth complete toric variety $X$. Using the theory of Klyachko filtrations in the reflexive case, we construct a collection of lattice polytopes codifying the multigraded Hilbert function of the module. We apply this approach to reflexive $\ZZ^{s+r+2}$-graded modules over non-standard bigraded polynomial rings $\CC[x_{0},\dotsc,$ $x_{s},y_{0},\dotsc,y_{r}]$. In this case, we give sharp bounds for the multigraded regularity index of their multigraded Hilbert function, and a method to compute their Hilbert polynomial.
\end{abstract}

\subjclass[2020]{14M25, 13D40, 14F06, 13A02}

\thanks{Acknowledgements: The first author was partially supported by MTM2016-78623-P. The second author is partially supported by  MDM-2014-0445-18-2.}

\maketitle

\tableofcontents

\markboth{}{}


\section{Introduction}

Let $G$ be a finitely generated abelian group and $\alpha_{1},\dotsc,\alpha_{r}\in G$. The polynomial ring $R=\CC[x_1,\dotsc,x_{r}]$ is naturally endowed with a $G-$grading setting $\deg(x_{i})=\alpha_{i}$ for $1\leq i\leq r$, $\deg(a)=0$ for any $a\in k$ and extending algebraically. When $G=\ZZ$ and $\alpha_{1}=\dotsb=\alpha_{r}=1$, we recover the standard $\ZZ-$graded polynomial ring. In the past few decades, there has been an increasing interest on general $G-$graded rings and their homogeneous modules (see for instance \cite{Ara-Cro-Neg, Big, Bot-Cha, Cro, Hoa-Hyry, Hoa-Tru, Mac-Smi1, Mac-Smi2, Tru-Ver}). Many of them focus on either of the following two points of view: (1) the Cox ring of a toric variety $X$, which is a $\Cl(X)-$graded polynomial ring $R$ where $\Cl(X)$ denotes the class group of $X$; and (2) the $\ZZ^{r}-$graded (or {\em multigraded}) polynomial ring, which is the finest way to grade $R$. There are multiple connections between both standpoints, for instance a multigraded module is automatically homogeneous with respect to any $G-$grading of $R$.

In this paper, we focus on $\Cl(X)-$graded modules which are also $\ZZ^{r}$--homogeneous. The advantage of this approach is twofold. First, it allows us to study $\ZZ^{r}-$graded modules using combinatorial tools arising from toric geometry. On the other way round, the whole class of $\Cl(X)-$graded modules is in general too wide, even in the rank $1$ case. So very often further conditions must be imposed to obtain information on invariants like the Hilbert function, the regularity or the minimal free resolution. In this note, we provide sharp bounds for the multigraded index regularity and a method to compute explicitly the multigraded Hilbert function and Hilbert polynomial for $\Cl(X)-$graded reflexive modules of any rank $\l\geq 1$ which are $\ZZ^{r}-$homogeneous. 


Let $X$ be an $n-$dimensional toric variety with fan $\Sigma$, and we set $r=|\Sigma(1)|$ the number of rays of $\Sigma$. The Cox ring of $X$ is a polynomial ring $R=\CC[x_{1},\dotsc,x_{r}]$ graded by the class group $\Cl(X)$ of $X$ and it allows to establish a close bond between $G-$graded commutative algebra and toric geometry. For instance, any $\Cl(X)-$graded module $E$ corresponds to a quasi-coherent sheaf $\widetilde{E}$ on $X$ related to $E$ by the following exact sequence:
\begin{equation}
0\rightarrow H^{0}_{B}(E)\rightarrow E\rightarrow H^{0}_{\ast}(X,\widetilde{E})\rightarrow H^{1}_{B}(E)\rightarrow 0,
\end{equation}
where $H^{0}_{\ast}(X,\widetilde{E}):=\bigoplus_{\alpha\in\Cl(X)}H^{0}(X,\widetilde{E}(\alpha))=:\Gamma E$ is the {\em $B-$saturation} of $E$ and $B\subset R$ the so-called {\em irrelevant ideal}. In particular, we can compute the multigraded Hilbert function $h_{\Gamma E}$ of $\Gamma E$, and the multigraded Hilbert polynomial of $E$, $P_{E}$, using the sheaf cohomology of $\widetilde{E}$: for any $\alpha\in\Cl(X)$, $h_{\Gamma E}(\alpha)=H^{0}(X,\widetilde{E})$, and $P_{E}(\alpha)=\chi(\widetilde{E}(\alpha))=\sum_{i=0}^{n}(-1)^{i}H^{i}(X,\widetilde{E}(\alpha))$.
Later on \cite{Batyrev-Cox}, it was proved that when $E$ is also $\ZZ^{r}-$homogeneous, the sheaf $\widetilde{E}$ is {\em equivariant}. Equivariant sheaves were previously studied in \cite{Kly89} and \cite{Kly91}. In these papers, Klyachko introduced a classification of torsion-free equivariant sheaves in terms of {\em multifiltrations} of vector spaces parametrized by the cones in $\Sigma$. In the past few years, the strategy of Klyachko has been used to study equivariant quasi-coherent sheaves from diverse geometrical standpoints (see for instance \cite{Her-Mus-Pay, Pay, Perling}). However, it has not been applied in the context of $G-$graded commutative algebra. From this perspective, we tackle two main problems in the context of $G-$graded commutative algebra: to bound the multigraded Castelnuovo-Mumford regularity and the study of the multigraded Hilbert function. Introduced in \cite{Mac-Smi1}, Multigraded Castelnuovo-Mumford regularity of $\Cl(X)-$graded $R-$modules has received a lot of attention in the last decades, particularly regarding to $\Cl(X)-$graded ideals and their associated coordinate rings. See for instance \cite{Bot-Cha, Cha-Nem, Mac-Smi2, Sah-Sop}.

In \cite{Perling}, Perling formalized and generalized Klyachko's classification introducing the theory of $\Sigma-$families to describe more general quasi-coherent equivariant sheaves. We observe that this construction induces a decomposition of $\Gamma E$, which is particularly well behaved when $E$ is reflexive. By means of this decomposition, we define a collection of lattice polytopes parametrized by $\Cl(X)$ which codifies the multigraded Hilbert function $h_{\Gamma E}$ of $\Gamma E$. Asymptotically, this collection of polytopes
is used to compute  the Hilbert polynomial $P_{E}$ of $E$. We call $\ri(E)=\{\alpha\in \Cl(X)\;|\;h_{E}(\alpha)=P_{E}(\alpha)\}$ the multigraded regularity index of the Hilbert function of $E$. The multigraded regularity index $\ri(\Gamma E)$ is related to the multigraded Castelnuovo--Mumford regularity. In Section \ref{Section:Applications}, we use this methods to find sharp bounds for the multigraded regularity index.

\vspace{2mm}
Next we explain how this paper is organized. Section \ref{Section:Preliminaries} contains all the preliminary results we use in this work. In subsection \ref{Section:Preliminaries graded modules},  we collect the basic definitions and results of $G-$graded rings and modules, and toric varieties. In Subsection \ref{Section:Preliminaries Klyachko filtrations for modules}, we summarize Klyachko's classification following \cite{Perling}, making particular emphasis on the classification of equivariant reflexive sheaves. 

The main body of the article is gathered in the remaining two sections. In Section \ref{Section:Multigraded regularity}, we use the theory of $\Sigma-$families to introduce the collection of polytopes $\{\Omega_{\um}(\ut)\}_{\ut\in\Cl(X)}$. The main result of this section is Proposition \ref{Proposition:Hilbert regularity}, showing that the lattice points of $\{\Omega_{\um}(\ut)\}_{\ut\in\Cl(X)}$ codify the multigraded Hilbert function of $\Gamma E$, as well as bound its multigraded support. In Section \ref{Section:Applications}, we apply this construction to multigraded reflexive modules over the Cox ring of a toric variety with $1-$splitting fan (see Definition \ref{Definition:Splitting}). Namely, let $0\leq a_{1}\leq\dotsb\leq a_{r}$ be integers and $R=\CC[x_{0},\dotsc,x_{s}, y_{0},\dotsc,y_{r}]$ with $\deg(x_{i})=(1,0)$ for $0\leq i\leq s$, $\deg(y_{0})=(0,1)$ and $\deg(y_{j})=(-a_{j},1)$ for $1\leq j\leq r$. We provide upper and lower bounds for the support of a rank $\l\geq1$ multigraded reflexive $R-$module $E$. In Theorem \ref{Theorem:Bound hilbert polynomial}, we apply the previously described tools to give sharp bounds of $\ri(\Gamma E)$. The proof of this result is constructive, thus describing an explicit way to compute $P_{E}$, the Hilbert polynomial of $E$. We end the paper with an example which shows that the bounds given in Theorem \ref{Theorem:Bound hilbert polynomial} are sharp.

\section{Preliminaries}\label{Section:Preliminaries}
\subsection{Modules graded by abelian groups}
\label{Section:Preliminaries graded modules}

Let us fix a ring $R$ and an abelian group $G$. A $R-$algebra $A$ is {\em $G-$graded} if it can be decomposed as a direct sum of $R-$submodules $A=\bigoplus_{g\in G}A_{g}$ such that $A_{g}\cdot A_{h}\subset A_{g+h}$ for any $g,h\in G$. A morphism of $G-$graded $R-$algebras $\psi:A\rightarrow B$ is $G-$graded if $\psi(A_{g}) \subset B_{g}$ for any $g\in G$. Similarly, an $A-$module $E$ is $G-$graded if it decomposes as a direct sum of submodules $E=\bigoplus_{g\in G} E_{g}$ such that $A_{g}\cdot E_{h}\subset E_{g+h}$.
Given $\pi:G\rightarrow H$ a surjective morphism of abelian groups, any $G-$graded $R-$algebra $A$ is also endowed with an $H-$grading:
\[
A=\bigoplus_{h\in H}A_{h},\quad\text{with}\quad A_{h}:=\bigoplus_{\substack{g\in G\\ \pi(g)=h}}A_{g}\quad\text{for any}\quad h\in H.
\]
Moreover, we have the following lemma:

\begin{lemma}\label{Lemma:Grading and exact sequence}
Let $0\rightarrow K \xrightarrow{\phi} G\xrightarrow{\pi} H\rightarrow 0$ be an exact sequence of abelian groups, and $A$ a $G-$graded $R-$algebra. Then, for any $h\in H$, $A_{h}$ is a $K-$graded $R-$module.
\end{lemma}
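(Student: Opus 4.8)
The plan is to use the exact sequence $0 \to K \xrightarrow{\phi} G \xrightarrow{\pi} H \to 0$ to refine the given $H$-grading on $A_h$. First I would fix $h \in H$ and recall that, by the construction of the induced $H$-grading described just before the lemma, $A_h = \bigoplus_{\pi(g) = h} A_g$, the sum ranging over the fiber $\pi^{-1}(h) \subset G$. The key observation is that this fiber is a coset of $K$ in $G$: since $\pi$ is surjective, pick any $g_0 \in G$ with $\pi(g_0) = h$; then $\pi(g) = h$ if and only if $g - g_0 \in \ker\pi = \im\phi$, so $\pi^{-1}(h) = g_0 + \phi(K)$, and because $\phi$ is injective this coset is in natural bijection with $K$ via $k \mapsto g_0 + \phi(k)$.

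Using this bijection, I would define, for each $k \in K$, the $R$-submodule $(A_h)_k := A_{g_0 + \phi(k)}$, and check that $A_h = \bigoplus_{k \in K} (A_h)_k$ as $R$-modules — this is immediate from the decomposition $A_h = \bigoplus_{g \in \pi^{-1}(h)} A_g$ together with the bijection of index sets. It remains to verify the compatibility condition making $A_h$ a $K$-graded module over the $K$-graded ring $A_0$ (the degree-$0$ part of the $H$-grading on $A$, which by the same argument applied with $h = 0$, $g_0 = 0$ is $K$-graded with $(A_0)_k = A_{\phi(k)}$): for $k, k' \in K$ we need $(A_0)_k \cdot (A_h)_{k'} \subseteq (A_h)_{k+k'}$, i.e. $A_{\phi(k)} \cdot A_{g_0 + \phi(k')} \subseteq A_{g_0 + \phi(k+k')}$. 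This follows directly from the $G$-graded multiplication $A_g \cdot A_{g'} \subseteq A_{g+g'}$, since $\phi(k) + (g_0 + \phi(k')) = g_0 + \phi(k+k')$ as $\phi$ is a homomorphism.

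There is essentially no hard step here: the lemma is a bookkeeping statement, and the only mild subtlety is that the $K$-grading depends a priori on the choice of the base point $g_0$ in the fiber — different choices differ by an element of $\phi(K)$, hence produce $K$-gradings that are canonically isomorphic via a shift, so the statement "$A_h$ is a $K$-graded $R$-module" is well-posed (one may also simply declare $(A_h)_k := A_{g_0+\phi(k)}$ for a fixed chosen $g_0$). I would close by remarking that taking $h = 0$ recovers the standard fact that the degree-zero part of a $G$-graded algebra under a surjection $\pi$ is $\ker\pi$-graded, which is the case used implicitly in the sheaf-theoretic constructions later in the paper.
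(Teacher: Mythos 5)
Your proposal is correct and follows essentially the same route as the paper: decompose $A_{h}=\bigoplus_{k\in K}A_{g_{0}+\phi(k)}$ using a chosen preimage $g_{0}\in\pi^{-1}(h)$ and note that the grading is independent of this choice up to shift. The extra verification of the $A_{0}$-module compatibility is a welcome (if routine) addition that the paper leaves implicit.
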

\begin{proof}
We fix $h\in H$ and let $x, y\in G$ be such that $\pi(x)=\pi(y)=h$. By exactness $x-y\in \im(\phi)$, and $\pi^{-1}(h)=\{x + \phi(k)\;\mid\;k\in K\}=\{y + \phi(k)\;\mid\;k\in K\}$. On the other hand, we consider the $R-$module $A_{h}$ as before and we have
\[
A_{h}=\bigoplus_{g\in \pi^{-1}(h)}A_{g}=\bigoplus_{k\in K}A_{x+\phi(k)}.
\]
Notice that this decomposition does not depend (up to permutation) on the chosen preimage $x\in G$, and it structures $A_{h}$ as a $K-$graded $R-$module.
\end{proof}

\begin{example}\label{Example:Graded polynomial rings}
\begin{itemize}
\item[(i)] The polynomial ring $A=R[x_{1},\dotsc,x_{n}]$ is $\ZZ-$graded as well as it is $\ZZ^{n}-$graded.
\item[(ii)] Let us consider $G=\ZZ/2$, and set $A_{\overline{0}}:=R\langle x_{1}^{\alpha_{1}}\dotsb x_{n}^{\alpha_{n}}\;\mid\;\alpha_{1}+\dotsb+\alpha_{n}\equiv 0 \mod 2\rangle$ and $A_{\overline{1}}:=R\langle x_{1}^{\alpha_{1}}\dotsb x_{n}^{\alpha_{n}}\;\mid\;\alpha_{1}+\dotsb+\alpha_{n}\equiv 1 \mod 2\rangle$. Then, $A=A_{\overline{0}}\oplus A_{\overline{1}}$ is a $\ZZ/2-$graded $R-$algebra, such that $\deg(x_{i})=\overline{1}$.
\item[(iii)] More in general, for any abelian group $G$ and $w_{1},\dotsc, w_{n}\in G$, we set
$A_{g}:=R\langle x_{1}^{\alpha_{1}}\dotsb x_{n}^{\alpha_{n}}\;\mid\;
\alpha_{1} w_{1}+\dotsb+\alpha_{n} w_{n} = g\rangle.$
This structures $A$ as a $G-$graded $R-$algebra. Notice that this $G-$grading is determined by setting $\deg(x_{i})=w_{i}$. In particular, for any element $g\in G$, we have the following decomposition
\[
A_{g}
\hspace{3mm}
=
\hspace{-7mm}
\bigoplus_{
\substack{
	(\alpha_{1},\dotsc,\alpha_{n})\in\ZZ^{n}\\
	\alpha_{1} w_{1}+\dotsb+\alpha_{n} w_{n} =g
	}
}\hspace{-8mm}
A_{(\alpha_{1},\dotsc,\alpha_{n})}.
\]
This shows that the $\ZZ^{n}-$graduation is the finest grading for the polynomial algebra $R[x_{1},\dotsc,x_{n}]$.
\end{itemize}
\end{example}

We say that a $R-$algebra $A$ (respectively an $A-$module) is $(G,H)-$graded if it is both a $G-$graded and $H-$graded. Finally, the following lemma shows that the localization by homogeneous elements respects the graduation
\begin{lemma}\label{Lemma:localization}
Let $A$ be a $G-$graded $R-$algebra, and $f\in A$ a homogeneous element of degree $d$. Then, the localized $R-$algebra $A_{f}$ at $f$ is $G-$graded.
\end{lemma}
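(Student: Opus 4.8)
The plan is to write down the candidate $G$-grading on $A_f$ explicitly and check the axioms in the sense of the definition in Subsection~\ref{Section:Preliminaries graded modules}. For each $g\in G$ set
\[
(A_f)_g := \{\, a/f^n \mid n\in\NN,\ a\in A_{g+nd} \,\}\subset A_f .
\]
First I would verify that $(A_f)_g$ is an $R$-submodule of $A_f$: given $a/f^n$ and $b/f^m$ in $(A_f)_g$, their sum over the common denominator is $(f^{m}a+f^{n}b)/f^{n+m}$, and since $f$ is homogeneous of degree $d$ both $f^{m}a$ and $f^{n}b$ lie in $A_{g+(n+m)d}$, so the sum is again in $(A_f)_g$; closure under the $R$-action is immediate because each $A_{g+nd}$ is already an $R$-submodule of $A$, so multiplying a numerator by an element of $R$ does not change its degree. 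Next, $A_f=\sum_{g\in G}(A_f)_g$: any element of $A_f$ has the form $a/f^n$, and decomposing $a=\sum_g a_g$ into its $G$-homogeneous components in $A$ gives $a/f^n=\sum_g a_g/f^n$ with $a_g/f^n\in(A_f)_{\deg(a_g)-nd}$.

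The step that actually requires care — and the main obstacle — is that this sum is direct, because of the equivalence relation defining the localization. Suppose $\sum_{g\in S}a_g/f^{n_g}=0$ in $A_f$ with $S$ finite and $a_g\in A_{g+n_g d}$. Putting everything over $f^{N}$ with $N=\max_{g\in S}n_g$, the numerator $b:=\sum_{g\in S}f^{\,N-n_g}a_g$ satisfies $b/f^N=0$, hence $f^{k}b=0$ in $A$ for some $k\in\NN$. Now $f^{\,k+N-n_g}a_g$ is homogeneous of degree $g+(k+N)d$, and these degrees are pairwise distinct for distinct $g\in S$; since $f^{k}b=\sum_{g\in S}f^{\,k+N-n_g}a_g=0$ and $A$ is $G$-graded, each summand vanishes, i.e. $f^{\,k+N-n_g}a_g=0$ in $A$. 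Multiplying numerator and denominator of $a_g/f^{n_g}$ by $f^{\,k+N-n_g}$ then shows $a_g/f^{n_g}=0$ in $A_f$ for every $g\in S$. Thus the only representation of $0$ is the trivial one, and $A_f=\bigoplus_{g\in G}(A_f)_g$.

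Finally I would check the multiplicative compatibility: for $a/f^n\in(A_f)_g$ and $b/f^m\in(A_f)_h$ the product is $ab/f^{n+m}$ with $ab$ homogeneous of degree $(g+nd)+(h+md)=(g+h)+(n+m)d$, so $(A_f)_g\cdot(A_f)_h\subset(A_f)_{g+h}$; and $1\in(A_f)_0$ (write $1=f^n/f^n$). Together these show that $A_f$, with the decomposition above, is a $G$-graded $R$-algebra. I expect no difficulty beyond the bookkeeping with denominators in the directness argument; everything else is a direct degree count using only that $f$ is homogeneous and that the $A_g$ are $R$-submodules closed under the stated multiplication rule.
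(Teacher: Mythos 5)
Your proposal is correct and takes essentially the same approach as the paper: you define the identical grading $(A_f)_g=\{a/f^{n}\mid \deg(a)-nd=g\}$, which the paper introduces and then declares straightforward to verify. You have simply filled in the verification the paper omits, including the only slightly delicate point (directness of the sum via the localization relation $f^{k}b=0$), and that argument is sound.
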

\begin{proof}
For any $g\in G$, we define the $R-$submodule $(A_{f})_{g}:=\{\frac{a}{f^{n}}\;\mid\;\deg(a)-nd=g\}\subset A_{f}$. It is straightforward that $A_{f}=\bigoplus_{g\in G} (A_{f})_{g}$ and it structures $A_{f}$ as a $G-$graded $R-$algebra.
\end{proof}

One important family of graded algebras by abelian groups come from the theory of toric varieties. Many results stated in the sequel hold more generally for simplicial toric varieties. However, for our purposes we focus on the smooth case. For a general reference of toric varieties we refer to \cite{CLS}. Let $X$ be an $n-$dimensional smooth complete toric variety with torus $\TT_{N}\cong(\CC^{\ast})^{n}$, associated to a fan $\Sigma\subset N\otimes\RR\cong\RR^{n}$, where $N\cong\ZZ^{n}$ is a lattice. We denote by $\Sigma(k)$ the set of $k-$dimensional cones. We call {\em rays} the cones $\rho\in\Sigma(1)$ and we set $n(\rho)\in N$ the first lattice point of $\rho\cap N$. We denote by $M=\Hom(N,\ZZ)\cong\ZZ^{n}$ its {\em character lattice} and for $m\in M$, we set $\chi^{m}:\TT_{N}\rightarrow\CC^{\ast}$ the corresponding rational function. For any cone $\sigma\in\Sigma$, let $\sigma^{\vee}$ be its {\em dual cone}, let $S_{\sigma}:=\sigma^{\vee}\cap M$ be the associated semigroup of characters, $\CC[S_{\sigma}]$ the corresponding $\CC-$algebra, and $U_{\sigma}=\Spec(\CC[S_{\sigma}])\subset X$. The affine subvarieties $U_{\sigma}$ are related as follows: if $\sigma\in \Sigma$ and $\tau\prec\sigma$ is a face, then there is a character $m\in M$ such that $S_{\tau}=S_{\sigma}+\ZZ\langle m\rangle$. As a consequence, there is an inclusion $U_{\tau}\hookrightarrow U_{\sigma}$ given by the natural morphism of $\CC-$algebras $\CC[S_{\sigma}]\hookrightarrow \CC[S_{\sigma}]_{\chi^{m}}=\CC[S_{\tau}]$. On the other hand, there is a bijection between $\TT_{N}-$orbits $O(\sigma)$ of $X$ and cones $\sigma\in\Sigma$ such that $\dim(O(\sigma))=\codim(\sigma)$, and we set $V(\sigma)=\overline{O(\sigma)}$. For the particular case of a ray $\rho\in\Sigma(1)$, $V(\rho)$ is an invariant Weil divisor, which we denote by $D_{\rho}$. The class group $\Cl(X)$ of $X$ is determined by the invariant Weil divisors.
\begin{proposition}\label{Proposition:Class group exact sequence}
We have the exact sequence
\begin{equation}\label{Eq:Class group exact sequence}
0\rightarrow M\xrightarrow{\phi}\bigoplus_{\rho\in\Sigma(1)}\ZZ D_{\rho}\xrightarrow{\pi} \Cl(X)\rightarrow 0,
\end{equation}
where $\phi(m)=\Div(\chi^{m})=\sum_{\rho\in\Sigma(1)}\langle m, n(\rho)\rangle D_{\rho}$, for any character $m\in M$; and for any invariant Weil divisor $D$, $\pi(D)=[D]$ is its class in $\Cl(X)$. In particular, $\Cl(X)$ is a finitely generated abelian group, and we set $c:=\rk\Cl(X)$.
\end{proposition}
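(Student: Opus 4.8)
The plan is to prove the three exactness statements plus injectivity of $\phi$ along the classical lines for divisor class groups of normal toric varieties, using smoothness only to make the local order-of-vanishing computation transparent. First I would check that $\phi$ is well-defined, i.e. that for a character $m\in M$ one has $\Div(\chi^{m})=\sum_{\rho\in\Sigma(1)}\langle m,n(\rho)\rangle D_{\rho}$. Since $\chi^{m}$ is a unit on the torus $\TT_{N}\subseteq X$, its divisor is supported on $X\setminus\TT_{N}=\bigcup_{\rho}D_{\rho}$, so it suffices to compute $\mathrm{ord}_{D_{\rho}}(\chi^{m})$. Fixing a cone $\sigma$ with $\rho\prec\sigma$, smoothness lets me pick a basis of $M$ dual to the ray generators of $\sigma$; in the resulting coordinates on $U_{\sigma}$ the divisor $D_{\rho}\cap U_{\sigma}$ is a coordinate hyperplane and $\chi^{m}$ a Laurent monomial whose exponent along that coordinate equals $\langle m,n(\rho)\rangle$, giving the formula. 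Injectivity of $\phi$ is then immediate: if $\Div(\chi^{m})=0$ then $\langle m,n(\rho)\rangle=0$ for every $\rho\in\Sigma(1)$, and since $\Sigma$ is complete the rays span $N\otimes\RR$, forcing $m=0$.

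Next I would prove surjectivity of $\pi$. The group $\Cl(X)$ is generated by classes of prime divisors; applying the localization (excision) exact sequence for divisor class groups to the open immersion $\TT_{N}\hookrightarrow X$ with closed complement $\bigcup_{\rho}D_{\rho}$, and using that $\Cl(\TT_{N})=0$ because $\CC[M]=\CC[x_{1}^{\pm},\dotsc,x_{n}^{\pm}]$ is a UFD, one sees that every divisor class on $X$ is already represented by a $\ZZ$-linear combination of the $D_{\rho}$. Hence $\pi$ is surjective.

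For exactness in the middle, the inclusion $\im\phi\subseteq\ker\pi$ is clear since principal divisors are zero in $\Cl(X)$. Conversely, if $D=\sum_{\rho}a_{\rho}D_{\rho}$ satisfies $[D]=0$, then $D=\Div(f)$ for some $f\in\CC(X)^{\ast}$. Since $D$ is supported away from $\TT_{N}$, $f$ has neither zeros nor poles on $\TT_{N}$, hence is a unit in $\CC[M]$; the units of $\CC[x_{1}^{\pm},\dotsc,x_{n}^{\pm}]$ are exactly $\CC^{\ast}\cdot\{\chi^{m}\mid m\in M\}$, so $f=c\,\chi^{m}$ and $D=\Div(\chi^{m})=\phi(m)$. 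Finally, since $\Sigma$ has finitely many rays, $\bigoplus_{\rho}\ZZ D_{\rho}\cong\ZZ^{r}$ is finitely generated, so its quotient $\Cl(X)$ is too; and because $\phi$ is injective with domain of rank $n$, the exact sequence gives $c=\rk\Cl(X)=r-n$.

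The genuinely geometric input — and the step I expect to require the most care — is the identification $\ker\pi=\{\Div(\chi^{m})\}$, which rests on the order-of-vanishing formula $\mathrm{ord}_{D_{\rho}}(\chi^{m})=\langle m,n(\rho)\rangle$ and on the description of the units of the Laurent polynomial ring; the surjectivity of $\pi$ likewise relies on $\Cl(\TT_{N})=0$ together with the excision sequence. Everything else in the argument is formal. Since \cite{CLS} is available as a reference, in the write-up these inputs can simply be quoted rather than reproved.
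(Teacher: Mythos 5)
Your argument is correct: it is precisely the standard proof of this fact (order-of-vanishing formula for $\chi^{m}$, excision with $\Cl(\TT_{N})=0$, and the description of units of $\CC[M]$ for exactness in the middle), which is the content of \cite[Theorem 4.1.3]{CLS} that the paper simply cites for its proof. Nothing is missing, so there is no substantive difference to report.
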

\begin{proof}
See, for instance, \cite[Theorem 4.1.3]{CLS}.
\end{proof}
 
Let $R=\CC[x_{\rho}\;|\;\rho\in\Sigma(1)]$ be a polynomial ring in $|\Sigma(1)|$ variables. The {\em Cox ring} of $X$ is the $\CC-$algebra $R$ endowed with a grading given by the class group $\Cl(X)$ of $X$. More precisely,  we set $\deg(x_{\rho}):=[D_{\rho}]\in\Cl(X)$, for each ray $\rho\in\Sigma(1)$ and extend the grading as in Example \ref{Example:Graded polynomial rings} (iii). Abusing notation we write $R=\CC[x_{1},\dotsc,x_{r}]$ whenever $\Sigma(1)=\{\rho_{1},\dotsc,\rho_{r}\}$ is the (ordered) set of rays of $\Sigma$.  On the other hand, for any cone $\sigma\in\Sigma$, set 
\[
x^{\hat{\sigma}}:=\prod_{\rho_{i}\in \Sigma(1)\setminus \sigma(1)}x_{i},\quad\text{and}\quad B:=\langle x^{\hat{\sigma}}\;\mid\;\sigma\in\Sigma\rangle.
\]
$B$ is called the {\em irrelevant ideal} and it holds that $B=\langle x^{\hat{\sigma}}\;\mid\;\sigma\in\Sigma_{\max}\rangle$. 
\begin{example}\label{Example:Projective space}
$\PP^{n}$ is a toric variety of dimension $n$. Let $\{e_{1},\dotsc,e_{n}\}$ be a basis of $N=\ZZ^{n}$. The fan $\Sigma$ associated to $\PP^{n}$ has $n+1$ rays: $\rho_{0}=\cone(-e_{1}-\dotsb-e_{n})$ and $\rho_{i}=\cone(e_{i})$ for $1\leq i\leq n$; and $n+1$ maximal cones $\sigma_{0}:=\cone(e_{1},\dotsc,e_{n})$ and $\sigma(i):=\cone(\{e_{j}\;\mid\;j\neq i\}\cup\{-e_{1}-\dotsb-e_{n}\})$ for $1\leq i\leq n$. Its  associated Cox ring is $\CC[x_{0},\dotsc,x_{n}]$ with $\deg(x_{i})=1$ for $0\leq i\leq n$, and the irrelevant ideal is $B(\Sigma)=\langle x_{0},\dotsc,x_{n}\rangle$.
\end{example}

\begin{example}\label{Example:Hirzebruch} For $a\geq0$, the Hirzebruch surface $\cH_{a}=\PP(\cO_{\PP^{1}}\oplus\cO_{\PP^{1}}(a))$ is a toric surface.
Let $N=\ZZ^{2}$ be a lattice, denote $\{e,f\}$ its standard basis and set $u_{0}:=-e+af$, $u_{1}:=e$, $v_{0}:=-f$ and $v_{1}:=f$. The fan $\Sigma$ associated to $\cH_{a}$ has four rays $\rho_{0}=\cone(u_{0})$, $\rho_{1}=\cone(u_{1})$, $\eta_{0}=\cone(v_{0})$ and $\eta_{1}=\cone(v_{1})$; and four maximal cones cones $\sigma_{00}=\cone(u_{1},v_{1})$, $\sigma_{01}=\cone(u_{1},v_{0})$, $\sigma_{10}=\cone(u_{0},v_{1})$ and $\sigma_{11}=\cone(u_{0},v_{0})$. Its Cox ring is $\CC[x_{0},x_{1},y_{0},y_{1}]$ with $\deg(x_{0})=\deg(x_{1})=(1,0)$, $\deg(y_{0})=(0,1)$ and $\deg(y_{1})=(-a,1)$; and its irrelevant ideal is $B(\Sigma)=\langle x_{1}y_{1}, x_{1}y_{0}, x_{0}y_{1}, x_{0}y_{0} \rangle$.
\end{example}

For any cone $\sigma\in \Sigma$, the localization of $R$ at $x^{\hat{\sigma}}$ is by Lemma \ref{Lemma:localization} a $\Cl(X)-$graded algebra $R_{x^{\hat{\sigma}}}$. Furthermore, there is an isomorphism $\CC[S_{\sigma}]\cong (R_{x^{\hat{\sigma}}})_{0}$, sending $\chi^{m}$ to the monomial $x_{1}^{\langle m, \rho_{1}\rangle}\dotsb x_{r}^{\langle m, \rho_{r}\rangle}$ for any $m\in S_{\sigma}$ and extending algebraically. The following results summarize the relation between $\Cl(X)-$graded $R-$modules and quasi-coherent sheaves on $X$.

\begin{proposition}\label{Proposition:line bundles}
 Fix $\alpha \in \Cl(X)$. Then, there is a natural isomorphism $R_{\alpha} \cong \Gamma(X,\mathcal{O}_{X}(D))$ for any Weil divisor $D = \sum_{\rho} a_{\rho}D_{\rho}$ such that $\alpha = [D]$
\end{proposition}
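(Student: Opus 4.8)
The statement to prove is Proposition \ref{Proposition:line bundles}: for a fixed $\alpha \in \Cl(X)$ and any Weil divisor $D = \sum_\rho a_\rho D_\rho$ with $[D] = \alpha$, there is a natural isomorphism $R_\alpha \cong \Gamma(X, \mathcal{O}_X(D))$. This is a standard fact in toric geometry (it appears, e.g., in \cite{CLS}), so my proof would essentially be to recall the argument identifying the graded piece of the Cox ring with global sections of the corresponding divisorial sheaf. The key point is to use the description of global sections of $\mathcal{O}_X(D)$ via lattice points of a polytope together with the class-group exact sequence of Proposition \ref{Proposition:Class group exact sequence}.

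First I would recall that, by the theory of divisors on normal varieties,
\[
\Gamma(X,\mathcal{O}_X(D)) = \{\, f \in \CC(X)^\ast \mid \Div(f) + D \geq 0 \,\} \cup \{0\}.
\]
Since $X$ is toric and $D$ is $\TT_N$-invariant, the space $\Gamma(X,\mathcal{O}_X(D))$ is a $\TT_N$-representation and decomposes into weight spaces indexed by characters $m \in M$; concretely $f$ is a $\CC$-linear combination of those $\chi^m$ with $\Div(\chi^m) + D \geq 0$, i.e.\ $\langle m, n(\rho)\rangle + a_\rho \geq 0$ for every ray $\rho \in \Sigma(1)$. Thus $\Gamma(X,\mathcal{O}_X(D))$ has a basis $\{\chi^m\}$ indexed by the lattice points $m \in M$ of the polyhedron $P_D := \{m \in M_\RR \mid \langle m, n(\rho)\rangle \geq -a_\rho \text{ for all }\rho\}$.

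Next I would identify the degree-$\alpha$ piece $R_\alpha$ of the Cox ring in parallel terms. By Example \ref{Example:Graded polynomial rings}(iii), $R_\alpha$ has a $\CC$-basis consisting of the monomials $x^b = \prod_\rho x_\rho^{b_\rho}$ with $b = (b_\rho) \in \NN^{\Sigma(1)}$ and $\pi(b) = \sum_\rho b_\rho [D_\rho] = \alpha = \pi(D)$. By exactness of \eqref{Eq:Class group exact sequence}, $\pi(b) = \pi(D)$ means $b - D \in \im(\phi)$, i.e.\ $b = D + \Div(\chi^m)$ for a unique $m \in M$ (uniqueness since $\phi$ is injective). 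The condition $b \in \NN^{\Sigma(1)}$ then translates precisely into $a_\rho + \langle m, n(\rho)\rangle \geq 0$ for all $\rho$, i.e.\ $m \in P_D \cap M$. So the map $x^b \mapsto \chi^m$ (where $b = D + \Div(\chi^m)$), extended $\CC$-linearly, is a bijection between the two bases, giving the isomorphism $R_\alpha \cong \Gamma(X,\mathcal{O}_X(D))$; one checks it is independent of the choice of $D$ representing $\alpha$ (a different choice $D'$ differs by $\Div(\chi^{m_0})$ and the identification shifts $m \mapsto m + m_0$ compatibly), which is what "natural" encodes here.

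**Main obstacle.** There is no serious obstacle — the content is bookkeeping with the exact sequence \eqref{Eq:Class group exact sequence}. The one point requiring a little care is the word \emph{natural}: one should either (i) simply cite \cite[Proposition 4.3.3]{CLS} / the relevant statement there and note that the isomorphism is the canonical one induced by sending $\chi^m$ to the monomial $x_1^{\langle m,\rho_1\rangle}\cdots x_r^{\langle m,\rho_r\rangle}$ as in the isomorphism $\CC[S_\sigma]\cong (R_{x^{\hat\sigma}})_0$ recalled above, or (ii) spell out that for $\alpha = 0$ and $D = 0$ this recovers $R_0 = \CC = \Gamma(X,\mathcal{O}_X)$ and that for varying $\alpha$ the isomorphisms are compatible with multiplication $R_\alpha \otimes R_\beta \to R_{\alpha+\beta}$, so that $\bigoplus_\alpha R_\alpha \cong \bigoplus_\alpha \Gamma(X,\mathcal{O}_X(D_\alpha))$ as graded rings. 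Given the expository nature of the section, I expect the cleanest write-up is a short argument along the lines above followed by a reference to \cite{CLS}.
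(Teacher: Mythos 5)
Your proposal is correct: the argument matching monomials $x^{b}$ with $b=D+\Div(\chi^{m})$, $b\in\NN^{\Sigma(1)}$, to the characters $\chi^{m}$ with $m$ in the polyhedron $P_{D}$ is exactly the standard proof behind this statement. The paper itself gives no argument and simply cites \cite[Proposition 5.3.7]{CLS}, which is the same route you sketch (and also offer as the citation), so there is nothing to reconcile.
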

\begin{proof}
See for instance \cite[Proposition 5.3.7]{CLS}.
\end{proof}

\begin{proposition} \label{Proposition:quasi-coherent sheaves}
\begin{itemize}
\item[(i)] If $E$ is a $\Cl(X)-$graded $R$-module, there is a quasi-coherent sheaf $\widetilde{E}$ on $X$ such that $\Gamma(U_{\sigma},\widetilde{E}) = (E_{x^{\hat{\sigma}}})_{0}$, for any $\sigma \in \Sigma$.
\item[(ii)] Conversely, if $\cE$ is a quasi-coherent sheaf on $X$, there is a $\Cl(X)-$gra\-ded $R-$module such that $\widetilde{E}=\cE$. In particular, $\widetilde{E}$ is coherent if and only if $E$ is finitely generated. 
\item[(iii)] $\widetilde{E}=0$ if and only if $B^{l}E=0$ for all $l\gg 0$.

\item[(iv)] There is an exact sequence of $\Cl(X)-$graded modules
\[
0\rightarrow H^{0}_{B}(E)\rightarrow E \rightarrow H^{0}_{\ast}(X,\widetilde{E})\rightarrow H^{1}_{B}(E)\rightarrow0
\]
\item[(v)] $H^{i+1}_{B}(E)\cong H^{i}_{\ast}(X,\widetilde{E}(\alpha))=\bigoplus_{\alpha\in\Cl(X)}H^{i}(X,\widetilde{E}(\alpha))$.
\end{itemize}
\end{proposition}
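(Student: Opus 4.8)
The statement collects the standard dictionary between $\Cl(X)$-graded $R$-modules and quasi-coherent sheaves on $X$, so the plan is to follow the template of \cite[Chapters 5--6 and 9]{CLS}, taking care that the $\Cl(X)$-grading behaves as needed throughout.

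For (i) and (ii), I would first invoke Lemma \ref{Lemma:localization} so that each $E_{x^{\hat{\sigma}}}$ inherits a $\Cl(X)$-grading, and attach to $\sigma$ the quasi-coherent $\cO_{U_{\sigma}}$-module associated to the $(R_{x^{\hat{\sigma}}})_{0}\cong\CC[S_{\sigma}]$-module $(E_{x^{\hat{\sigma}}})_{0}$. The one thing to check is gluing: if $\tau\prec\sigma$ then $x^{\hat{\sigma}}$ divides $x^{\hat{\tau}}$, so $R_{x^{\hat{\tau}}}$ is a further localization of $R_{x^{\hat{\sigma}}}$ and $(E_{x^{\hat{\tau}}})_{0}$ is the corresponding localization of $(E_{x^{\hat{\sigma}}})_{0}$, compatibly with $U_{\tau}\hookrightarrow U_{\sigma}$; since $U_{\sigma}\cap U_{\sigma'}=U_{\sigma\cap\sigma'}$ these charts patch to a quasi-coherent sheaf $\widetilde{E}$ with $\Gamma(U_{\sigma},\widetilde{E})=(E_{x^{\hat{\sigma}}})_{0}$. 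For the converse I would fix a compatible family of invertible sheaves $\cO_{X}(\alpha)$ representing the classes of $\Cl(X)$ (possible since $X$ is smooth), put $E=\bigoplus_{\alpha\in\Cl(X)}\Gamma(X,\cE\otimes\cO_{X}(\alpha))$ with its $R$-module structure coming from Proposition \ref{Proposition:line bundles}, and restrict to each chart $U_{\sigma}$, where all the $\cO_{X}(\alpha)$ trivialize; this identifies $(E_{x^{\hat{\sigma}}})_{0}$ with $\Gamma(U_{\sigma},\cE)$, hence $\widetilde{E}\cong\cE$. The coherence equivalence then follows because $R_{x^{\hat{\sigma}}}$ is a finitely generated algebra over the Noetherian ring $\CC[S_{\sigma}]$, so $(E_{x^{\hat{\sigma}}})_{0}$ is a finitely generated $\CC[S_{\sigma}]$-module for every $\sigma\in\Sigma_{\max}$ exactly when $E$ is finitely generated over $R$.

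For (iii), using $B=\langle x^{\hat{\sigma}}\mid\sigma\in\Sigma_{\max}\rangle$ I would reduce to maximal cones: since vanishing of the sheaf is local and tensoring with the line bundles $\cO_{X}(\alpha)$ recovers all twists, $\widetilde{E}=0$ is equivalent to $E_{x^{\hat{\sigma}}}=0$ for all $\sigma\in\Sigma_{\max}$. If $B^{l}E=0$ then each $x^{\hat{\sigma}}$ acts nilpotently on $E$, whence $E_{x^{\hat{\sigma}}}=0$ and $\widetilde{E}=0$. Conversely, if $\widetilde{E}=0$ then for $e\in E$ there is $l$ with $(x^{\hat{\sigma}})^{l}e=0$ for every $\sigma\in\Sigma_{\max}$; a pigeonhole argument shows $B^{N}\subseteq\langle(x^{\hat{\sigma}})^{l}\mid\sigma\in\Sigma_{\max}\rangle$ for $N=l\cdot\#\Sigma_{\max}$, so $B^{N}e=0$, and taking a finite generating set of $E$ gives the uniform $B^{l}E=0$ for $l\gg0$.

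The cohomological statements (iv) and (v) are where I expect the real work. Since $X$ is smooth, each $U_{\sigma}$ with $\sigma\in\Sigma_{\max}$ is affine and so is every intersection $U_{\sigma_{i_{0}}}\cap\dots\cap U_{\sigma_{i_{p}}}=U_{\sigma_{i_{0}}\cap\dots\cap\sigma_{i_{p}}}$, so the \v{C}ech complex of the cover $\{U_{\sigma}\}_{\sigma\in\Sigma_{\max}}$ computes $H^{i}(X,\widetilde{E}(\alpha))$ for each $\alpha$. The plan is to identify the sections of $\widetilde{E}(\alpha)$ over such an intersection with the degree-$\alpha$ component of $E_{x^{\hat{\sigma}_{i_{0}}}\cdots x^{\hat{\sigma}_{i_{p}}}}$, so that this \v{C}ech complex becomes the degree-$\alpha$ strand of the extended (stable Koszul) \v{C}ech complex on the generators $x^{\hat{\sigma}}$ of $B$, which computes $H^{\bullet}_{B}(E)$. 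Reading off its first terms gives the four-term exact sequence of (iv) involving $H^{0}_{B}(E)$, $E$, $H^{0}_{\ast}(X,\widetilde{E})$ and $H^{1}_{B}(E)$, and its higher cohomology gives $H^{i+1}_{B}(E)\cong H^{i}_{\ast}(X,\widetilde{E})=\bigoplus_{\alpha\in\Cl(X)}H^{i}(X,\widetilde{E}(\alpha))$ for $i\geq1$, all maps being $\Cl(X)$-graded. The main obstacle is precisely this degree-by-degree matching of the \v{C}ech and stable Koszul complexes --- lining up the twists $\widetilde{E}(\alpha)$ with the localization degrees --- together with the verification that $\{U_{\sigma}\}_{\sigma\in\Sigma_{\max}}$ is an acyclic cover for quasi-coherent sheaves; both are classical, but the grading bookkeeping has to be done carefully.
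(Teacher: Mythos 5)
The paper does not prove this proposition at all: it simply cites \cite[Propositions 5.3.3, 5.3.6, 5.3.10]{CLS} for (i)--(iii) and \cite[Proposition 2.3]{Eis-Mus-Sti} for (iv)--(v). Your sketch reconstructs exactly the arguments behind those citations --- gluing the affine pieces $(E_{x^{\hat{\sigma}}})_{0}$ over the $U_{\sigma}$, taking $E=\bigoplus_{\alpha}\Gamma(X,\cE\otimes\cO_{X}(\alpha))$ for the converse, nilpotence of the $x^{\hat{\sigma}}$ for (iii), and the degree-by-degree comparison of the \v{C}ech complex of $\{U_{\sigma}\}_{\sigma\in\Sigma_{\max}}$ with the stable Koszul (\v{C}ech) complex on the generators of $B$ for (iv)--(v), which is precisely the Eisenbud--Musta\c{t}\u{a}--Stillman argument. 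So in spirit you are on the same route as the paper, only with the details filled in rather than cited.

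One step, however, does not hold as you wrote it: the coherence claim in (ii). From ``$R_{x^{\hat{\sigma}}}$ is a finitely generated algebra over $\CC[S_{\sigma}]$'' you conclude that $(E_{x^{\hat{\sigma}}})_{0}$ is finitely generated over $\CC[S_{\sigma}]$ for all maximal $\sigma$ \emph{exactly when} $E$ is finitely generated over $R$. The forward implication is fine (for a maximal smooth cone the classes $[D_{\rho}]$, $\rho\notin\sigma(1)$, generate $\Cl(X)$, so $R_{x^{\hat{\sigma}}}$ has units in every degree and the degree-$0$ part of a finitely generated module is finitely generated), but the converse is false: a module such as an infinite direct sum of $B$-torsion modules has all $(E_{x^{\hat{\sigma}}})_{0}=0$, hence $\widetilde{E}$ coherent, without $E$ being finitely generated. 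The intended statement (and the content of \cite[Proposition 5.3.10]{CLS}) is that a coherent $\cE$ can be written as $\widetilde{E'}$ for \emph{some} finitely generated $E'$; the standard fix in your setup is to replace $E=\bigoplus_{\alpha}\Gamma(X,\cE(\alpha))$ by a finitely generated graded submodule $E'\subset E$ containing enough sections to generate $\cE$ on each of the finitely many charts $U_{\sigma}$, $\sigma\in\Sigma_{\max}$, and to check $\widetilde{E'}=\cE$. Relatedly, note that in (iii) your uniform bound $B^{l}E=0$ genuinely uses a finite generating set of $E$, i.e.\ finite generation of $E$, which is also the hypothesis under which the cited CLS statement is proved; with that hypothesis your argument there is correct.
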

\begin{proof}
See for instance \cite[Proposition 5.3.3, Proposition 5.3.6 and Proposition 5.3.10]{CLS} for (i)-(iii); and \cite[Proposition 2.3]{Eis-Mus-Sti} for (iv)-(v).
\end{proof}

The module $\Gamma E = H^{0}_{\ast}(X,\widetilde{E})$ is called the {\em $B-$saturation} of $E$. We say that $E$ is {\em $B-$saturated} if $E\cong \Gamma E$, or equivalently if $H^{0}_{B}(E)=H^{1}_{B}(E)=0$. If $H^{0}_{B}(E)=(0:_{E}B^{\infty})=\bigcup_{i\geq0}(0:_{E}B^{i})=\bigcup_{i\geq0}\{f\in E\mid f B^{i}=0\}=0$, we say that $E$ is $B-$torsion free.


\subsection{Multigraded modules over Cox rings and Klyachko filtrations}
\label{Section:Preliminaries Klyachko filtrations for modules}

Hereafter, we fix a smooth complete toric variety $X$. Let $R=\CC[x_{1},\dotsc,x_{r}]$ be its associated $\Cl(X)-$graded Cox ring, where $r=|\Sigma(1)|$, and $B$ the corresponding irrelevant ideal. In this subsection we focus on $(\Cl(X), \ZZ^{r})-$graded $R-$modules and their associated quasi-coherent sheaves which are equivariant. 
We start recalling the notion of $\Sigma-$family used to describe equivariant sheaves and later we pay attention to {\em reflexive} sheaves, which will play a central role in the forthcoming sections.
\begin{definition}
For any $t\in\TT_{N}$, let $\mu_{t}:X\rightarrow X$ be the morphism given by the action of $\TT_{N}$ on $X$. A quasi-coherent sheaf $\cE$ on $X$ is {\em equivariant} if there is a family of isomorphisms $\{\phi_{t}:\mu_{t}^{\ast}\cE\cong\cE\}_{t\in\TT_{N}}$ such that $\phi_{t_{1}\cdot t_{2}}=\phi_{t_{2}}\circ\mu_{t_{2}}^{\ast}\phi_{t_{1}}$ for any $t_{1},t_{2}\in\TT_{N}$.
\end{definition}
In \cite{Batyrev-Cox}, Batyrev and Cox proved the following result:
\begin{proposition}\label{Proposition:Multigraded-Equivariant}
Let $E$ be a $\Cl(X)-$homogeneous $R-$module. The quasi-coherent sheaf $\widetilde{E}$ is equivariant if and only if $E$ is also $\ZZ^{r}-$graded.
\end{proposition}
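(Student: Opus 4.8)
The plan is to prove both implications by working locally on the affine toric charts $U_\sigma$ and translating the grading data into the equivariant structure. Recall from Proposition \ref{Proposition:quasi-coherent sheaves}(i) that $\Gamma(U_\sigma,\widetilde{E})=(E_{x^{\hat\sigma}})_0$, the degree-zero part of the $\Cl(X)$-grading on the localization. The key observation is that, by Lemma \ref{Lemma:Grading and exact sequence} applied to the exact sequence \eqref{Eq:Class group exact sequence}, a $(\Cl(X),\ZZ^r)$-grading on $E$ is exactly the data of a refinement of the $\Cl(X)$-graded piece $E_\alpha$ into an $M$-graded $\CC$-vector space for each $\alpha$ (the kernel $M$ acting on each fiber of $\pi$). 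Passing to the localization $E_{x^{\hat\sigma}}$ and taking the degree-zero component, this $M$-grading descends to an $M$-grading — equivalently an $S_\sigma$-grading — on the $\CC[S_\sigma]\cong(R_{x^{\hat\sigma}})_0$-module $\Gamma(U_\sigma,\widetilde{E})$, and these glue along the inclusions $U_\tau\hookrightarrow U_\sigma$ because the isomorphism $S_\tau=S_\sigma+\ZZ\langle m\rangle$ is compatible with the $M$-grading on $R_{x^{\hat\sigma}}$.

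First I would set up the dictionary between equivariant structures and gradings on the affine charts: an equivariant $\cO_{U_\sigma}$-module is the same thing as an $M$-graded (equivalently $S_\sigma$-graded, since the module is a $\CC[S_\sigma]$-module) module over $\CC[S_\sigma]$, where the grading records the weight-space decomposition under the $\TT_N$-action on the fiber. This is the standard correspondence for torus actions on affine schemes; I would cite it or sketch it via the coaction map $\cE\to\cE\otimes\CC[M]$. Second, I would show that a $\ZZ^r$-grading on $E$ compatible with the $\Cl(X)$-grading induces such an $M$-grading on each $(E_{x^{\hat\sigma}})_0$: use Lemma \ref{Lemma:localization} and the refinement described above, noting that the $\ZZ^r$-degree of a monomial $x^a$ maps under $\pi$ to its $\Cl(X)$-degree, so fixing $\Cl(X)$-degree $0$ on $E_{x^{\hat\sigma}}$ leaves an $M=\ker\pi$-torsor worth of $\ZZ^r$-degrees, hence an $M$-grading. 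Third, I would check the cocycle/gluing condition: the local equivariant structures are compatible on overlaps $U_\sigma\cap U_\tau=U_{\sigma\cap\tau}$ precisely because the $M$-gradings are induced by the single global $\ZZ^r$-grading on $E$, so restriction maps are graded. This gives the "only if"… rather, the direction (multigraded $\Rightarrow$ equivariant).

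For the converse, I would argue that an equivariant structure on $\widetilde{E}$ yields an $M$-grading on each $\Gamma(U_\sigma,\widetilde{E})=(E_{x^{\hat\sigma}})_0$ compatible with restrictions; then, since one may take $E=\bigoplus_{\alpha\in\Cl(X)}\Gamma(X,\widetilde{E}(\alpha))=\Gamma E$ to be the canonical $B$-saturated representative (Proposition \ref{Proposition:quasi-coherent sheaves}(ii),(iv)-(v)), the $M$-gradings on all charts assemble, via the sheaf axiom and the exact sequence \eqref{Eq:Class group exact sequence}, into a $\ZZ^r$-grading on $\Gamma E$ refining its $\Cl(X)$-grading; finally I would note that the statement is really about $\widetilde{E}$, so replacing $E$ by $\Gamma E$ is harmless. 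Alternatively, and perhaps more cleanly, one can invoke the already-cited classification results: Batyrev--Cox prove exactly this, so a legitimate proof is simply to point to \cite{Batyrev-Cox} and spell out the grading-theoretic translation via Lemma \ref{Lemma:Grading and exact sequence} and Lemma \ref{Lemma:localization}.

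The main obstacle I anticipate is the bookkeeping in the gluing step: verifying that the locally-defined $M$-gradings (or equivariant trivializations $\phi_t$) satisfy the cocycle condition $\phi_{t_1 t_2}=\phi_{t_2}\circ\mu_{t_2}^*\phi_{t_1}$ globally, which amounts to checking that the weight decompositions on the various $(E_{x^{\hat\sigma}})_0$ are restrictions of one another under the maps $\CC[S_\sigma]\hookrightarrow\CC[S_\tau]$ — this is where one genuinely uses that a \emph{single} $\ZZ^r$-grading (not just local ones) exists, and conversely extracts one. Since the paper only needs this as a recollection from \cite{Batyrev-Cox}, I would keep the proof short: establish the affine-chart dictionary, push the $\ZZ^r$-grading through localization and degree-zero truncation, and observe that compatibility on overlaps is automatic because everything descends from the global grading; then cite \cite{Batyrev-Cox} for the full statement.
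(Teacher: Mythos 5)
Your proposal is correct, but note that the paper does not actually prove this statement: its ``proof'' is a one-line citation of \cite{Batyrev-Cox} (Proposition 4.17), which is exactly the fallback you offer at the end, so on that route you and the paper coincide. Your additional direct sketch reconstructs the standard argument behind that citation: equivariant quasi-coherent sheaves on an affine chart $U_{\sigma}$ correspond to $M$-graded $\CC[S_{\sigma}]$-modules via the weight (coaction) decomposition, and a $\ZZ^{r}$-grading refining the $\Cl(X)$-grading induces, by Lemma \ref{Lemma:localization} and Lemma \ref{Lemma:Grading and exact sequence} applied to (\ref{Eq:Class group exact sequence}), an $M$-grading on each $(E_{x^{\hat{\sigma}}})_{0}=\Gamma(U_{\sigma},\widetilde{E})$ compatible with the restriction maps $i^{\sigma\tau}$; this is precisely the mechanism the paper itself exploits in Subsection \ref{Section:Preliminaries Klyachko filtrations for modules} (the decomposition (\ref{Eq:Sigma family decomposition})), so your expanded route is consistent with, and makes self-contained, the direction the paper actually uses (multigraded $\Rightarrow$ equivariant), at the cost of the gluing bookkeeping you identify. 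Two small cautions: the parenthetical ``equivalently $S_{\sigma}$-graded'' is imprecise --- the module is graded by the group $M$ over the $S_{\sigma}$-graded ring $\CC[S_{\sigma}]$, and its weights need not lie in $S_{\sigma}$; and, as you correctly observe, the converse direction can only produce a $\ZZ^{r}$-grading on $\Gamma E$ rather than on $E$ itself, since $B$-torsion of $E$ is invisible to $\widetilde{E}$ (one can perturb a $\ZZ^{r}$-graded module by non-homogeneous $B$-torsion without changing the sheaf), so the literal ``only if'' must be read, as you read it, as a statement about the saturated module or about the existence of a multigraded representative --- which is how Batyrev--Cox formulate it.
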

\begin{proof}
See for instance \cite[Proposition 4.17]{Batyrev-Cox}.
\end{proof}
In \cite{Kly89} and \cite{Kly91}, Klyachko observed that an equivariant torsion-free sheaf can be decomposed into a family of filtered vector spaces. This feature was further developed by Perling in \cite{Perling}. For seek of completeness, we recall the construction as presented in \cite{Perling}.
If $E$ is a $(\Cl(X), \ZZ^{r})-$graded $R-$module, by Lemma \ref{Lemma:Grading and exact sequence} and using the exact sequence (\ref{Eq:Class group exact sequence}), each homogeneous component of degree $\alpha\in\Cl(X)$ is $M-$graded. More precisely,
\[
E_{\alpha}=\bigoplus_{m\in M}E_{z+\phi(m)},\quad\text{for any}\quad z\in\pi^{-1}(\alpha).
\]
Notice that any monomial in $R$ is $(\Cl(X),\ZZ^{r})-$homogeneous. Thus, for any $\sigma\in \Sigma$ and by Lemma \ref{Lemma:localization}, we have an analogous decomposition for the localized $\CC[S_{\sigma}]-$module $E^{\sigma}:=(E_{x^{\hat{\sigma}}})_{0}$.
\begin{equation}\label{Eq:Sigma family decomposition}
E^{\sigma}=\bigoplus_{m\in M}(E_{x^{\hat{\sigma}}})_{\phi(m)}=:\bigoplus_{m\in M}E^{\sigma}_{m}.
\end{equation}
Setting $\cE:=\widetilde{E}$ we have $E^{\sigma}\cong \Gamma(U_{\sigma},\cE)$. Recall that $S_{\sigma}$ induces a preorder on $M$: for any $m,m'\in M$ we say that $m\leq_{\sigma} m'$ if and only if $m'-m\in S_{\sigma}$, or equivalently if $\langle m' - m, u\rangle \geq0$ for all $u\in\sigma$.
For any two characters $m\leq_{\sigma}m'$ we define $\chi^{\sigma}_{m,m'}:E^{\sigma}_{m}\rightarrow E^{\sigma}_{m'}$ to be the multiplication by $\chi^{m'-m}\in \CC[S_{\sigma}]$. We have $\chi^{\sigma}_{m,m}=1$, and $\chi^{\sigma}_{m,m''}=\chi^{\sigma}_{m',m''}\circ\chi^{\sigma}_{m,m'}$ for any $m\leq_{\sigma}m'\leq_{\sigma}m''$. In particular 
if $m\leq_{\sigma} m'$ and $m'\leq_{\sigma}m$, $\chi^{\sigma}_{m,m'}$ is an isomorphism.
We call $\hat{E}^{\sigma}:=\{E^{\sigma}_{m},\chi_{m,m'}^{\sigma}\}$ a {\em $\sigma-$family} (see \cite[Definition 4.2]{Perling}).

Now, let $\tau\prec\sigma$ be two cones in $\Sigma$. There is $m\in M$ such that localizing at $\chi^{m}$, we have the isomorphisms $\CC[S_{\tau}]\cong\CC[S_{\sigma}]_{\chi^{m}}$ and $E^{\tau}\cong E^{\sigma}_{\chi^{m}}$. Thus, we have a morphism $i^{\sigma \tau}:\CC[S_{\sigma}]\rightarrow\CC[S_{\tau}]$ inducing a morphism $i^{\sigma \tau}_{m'}:E^{\sigma}_{m'}\rightarrow E^{\tau}_{m'}$ for any character $m'\in M$. We call $\{\hat{E}^{\sigma}\}_{\sigma\in \Sigma}$ a {\em $\Sigma-$family} (see \cite[Definition 4.8]{Perling}). Moreover, the class of all $\Sigma-$families form a category and we have the following result.
\begin{proposition}\label{Proposition:Delta family and quasi-coherent sheaves}
Let $\Sigma$ be a fan. The category of $\Sigma-$families is equivalent to the category of equivariant quasi-coherent sheaves over $X$.
\end{proposition}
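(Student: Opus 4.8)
The plan is to construct the two functors and verify they are mutually quasi-inverse. In one direction, given an equivariant quasi-coherent sheaf $\cE$ on $X$, I would assign to each cone $\sigma\in\Sigma$ the $\sigma$-family $\hat{\cE}^{\sigma}$ obtained as follows: the $\TT_{N}$-action on $U_{\sigma}=\Spec(\CC[S_{\sigma}])$ makes $\Gamma(U_{\sigma},\cE)$ into an $M$-graded $\CC[S_{\sigma}]$-module, and one sets $\cE^{\sigma}_{m}$ to be its degree-$m$ component, with $\chi^{\sigma}_{m,m'}$ the multiplication map by $\chi^{m'-m}$ for $m\leq_{\sigma}m'$. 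The compatibility of these data as a $\Sigma$-family comes from the restriction maps $\Gamma(U_{\sigma},\cE)\to\Gamma(U_{\tau},\cE)$ for $\tau\prec\sigma$, which are $\TT_{N}$-equivariant and hence $M$-graded, yielding the required $i^{\sigma\tau}_{m'}$; the cocycle condition $\chi^{\sigma}_{m,m''}=\chi^{\sigma}_{m',m''}\circ\chi^{\sigma}_{m,m'}$ and the transitivity of the $i^{\sigma\tau}$ are immediate from associativity of multiplication and functoriality of restriction. The key input is that a $\TT_{N}$-equivariant quasi-coherent sheaf on an affine toric chart $U_{\sigma}$ is exactly an $M$-graded $\CC[S_{\sigma}]$-module — this is the standard dictionary between comodules over $\CC[M]$ and graded modules, which I would cite or recall briefly.

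In the other direction, given a $\Sigma$-family $\{\hat{E}^{\sigma}\}_{\sigma\in\Sigma}$, I would first reconstruct, for each $\sigma$, the $M$-graded $\CC[S_{\sigma}]$-module $E^{\sigma}:=\bigoplus_{m\in M}E^{\sigma}_{m}$, with the module structure determined by the maps $\chi^{\sigma}_{m,m'}$ (multiplication by the monomial $\chi^{m'-m}$ acts as $\chi^{\sigma}_{m,m'}$). This is an $M$-graded module over $\CC[S_{\sigma}]$, hence a $\TT_{N}$-equivariant quasi-coherent sheaf $\widetilde{E^{\sigma}}$ on $U_{\sigma}$. One then glues these local pieces: for $\tau\prec\sigma$ there is $m\in M$ with $\CC[S_{\tau}]\cong\CC[S_{\sigma}]_{\chi^{m}}$, and the maps $i^{\sigma\tau}_{m'}$ assemble into an isomorphism $\widetilde{E^{\sigma}}|_{U_{\tau}}\cong\widetilde{E^{\tau}}$; the transitivity of the $i^{\sigma\tau}$ furnishes the cocycle condition on triple overlaps $U_{\rho}$ with $\rho\prec\tau\prec\sigma$, so the sheaves glue to a global equivariant quasi-coherent sheaf on $X$. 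Here I would use that $X$ is covered by the affine charts $U_{\sigma}$ for $\sigma\in\Sigma_{\max}$, and that all intersections are again toric charts $U_{\tau}$ with $\tau$ a common face, so the gluing data is entirely encoded in the $\Sigma$-family.

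Finally, I would check that these assignments are functorial — a morphism of $\Sigma$-families is by definition a compatible collection of $\CC$-linear maps $E^{\sigma}_{m}\to F^{\sigma}_{m}$ commuting with the $\chi^{\sigma}_{m,m'}$ and the $i^{\sigma\tau}_{m'}$, which is exactly the data of an $M$-graded $\CC[S_{\sigma}]$-module homomorphism on each chart compatible with restrictions, i.e. a morphism of equivariant sheaves — and that the two compositions are naturally isomorphic to the respective identity functors. One composition is the identity on the nose once one observes that taking global sections over $U_{\sigma}$ of $\widetilde{E^{\sigma}}$ recovers $E^{\sigma}$ with its grading; the other composition reproduces $\cE$ because a quasi-coherent sheaf is determined by its sections on the affine cover together with the restriction maps, all of which are recovered from $\hat{\cE}$. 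The main obstacle I anticipate is the careful bookkeeping in the gluing step: one must track the choice of the character $m$ realizing $\CC[S_{\tau}]$ as a localization of $\CC[S_{\sigma}]$, verify that the induced isomorphism of localized modules is independent (up to the canonical identification) of that choice, and confirm that the cocycle condition holds exactly — this is where Perling's framework does the work, and I would lean on \cite[Definition 4.8]{Perling} and its surrounding discussion rather than reprove it from scratch, since the statement is essentially a repackaging of the affine-local equivalence glued along the fan.
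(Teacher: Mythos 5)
Your outline is correct and is essentially the argument behind the result: the affine dictionary between $\TT_{N}$-equivariant quasi-coherent sheaves on $U_{\sigma}$ and $M$-graded $\CC[S_{\sigma}]$-modules, glued along the face relations of the fan, with the bookkeeping for the localization maps $i^{\sigma\tau}_{m}$ handled as in \cite[Theorem 4.9]{Perling}. The paper itself offers no independent proof but simply cites that theorem of Perling, so your sketch matches the intended route and defers to the same source at the same point.
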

\begin{proof}
See \cite[Theorem 4.9]{Perling}.
\end{proof}

As a corollary, the $\Sigma-$family characterizes the saturation $\Gamma E$ of any $(\Cl(X),\ZZ^{r})-$graded module $E$. When the module is torsion-free, then $E^{\sigma}$ is a torsion-free $\CC[S_{\sigma}]-$module. In that case, every vector space in the $\Sigma-$family can be viewed as a subspace of $\CC^{\l}$ where $\l=\rk E$. More precisely we have the following result.
\begin{proposition}\label{Proposition:Torsionfrees}
Let $E$ be a torsion-free $(\Cl(X),\ZZ^{r})-$graded module of rank $\l$, and let $\{\hat{E}^{\sigma}\}$ be its associated $\Sigma-$family. The following holds:
\begin{itemize}
\item[(i)] For any $m'\leq_{\sigma} m$, the linear map $\chi_{m',m}^{\sigma}:E^{\sigma}_{m'}\rightarrow E^{\sigma}_{m}$ is injective.
\item[(ii)] For any $m\in M$, and any cones $\tau\prec\sigma$ in $\Sigma$, the linear map $i^{\sigma\tau}_{m}:E^{\sigma}_{m}\rightarrow E^{\tau}_{m}$ is injective.
\item[(iii)] There is a vector space $\bE\cong\CC^{\l}$ such that $E^{\{0\}}_{m}\cong \bE$ for any $m\in M$. 
\end{itemize}
Moreover, we have the following commutative diagram:
\vspace{-1mm}
\begin{center}
\begin{tikzpicture}
\matrix (M) [matrix of nodes, row sep=0.2cm, column sep=1.8cm, align=center,text width=0.8cm, text height=0.3cm, anchor=center]{
              & $E^{\{0\}}_{m'}$ & $E^{\sigma}_{m'}$ \\
$\bE$ &                &                 \\
              & $E^{\{0\}}_{m}$  & $E^{\sigma}_{m}.$  \\
};
\draw[->] (M-1-3) -- (M-3-3) node[midway,right]{\scriptsize{$\chi^{\sigma}_{m',m}$}};
\draw[->] (M-1-2) -- (M-3-2) 
	node[midway,right]{\scriptsize{$\chi^{\{0\}}_{m',m}$}} 
	node[midway,left]{\large{$\cong\quad\;$}};
\draw[->] (M-1-3) -- (M-1-2);
\draw[->] (M-3-3) -- (M-3-2);
\draw[->] (M-3-3) -- (M-3-2);
\draw[->] (M-3-3) -- (M-3-2);
\draw[->] (M-1-2) -- (M-2-1)
	node[midway, above]{\scriptsize{$\varphi_{m'}$}};
\draw[->] (M-3-2) -- (M-2-1)
	node[midway, below]{\scriptsize{$\varphi_{m}$}};
\end{tikzpicture}
\end{center}
\end{proposition}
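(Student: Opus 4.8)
The plan is to unwind the definitions and reduce everything to the torsion-freeness of the localized modules $E^{\sigma}$, together with the explicit description of the multiplication maps $\chi^{\sigma}_{m,m'}$ and the transition maps $i^{\sigma\tau}_{m}$. First I would recall that since $E$ is torsion-free and every monomial $x^{\hat{\sigma}}$ is a nonzerodivisor on $E$ (being $\Cl(X)$-homogeneous and acting injectively on a torsion-free module over the domain $R$), each localization $E_{x^{\hat{\sigma}}}$ is torsion-free over $R_{x^{\hat{\sigma}}}$, and hence $E^{\sigma}=(E_{x^{\hat{\sigma}}})_{0}$ is a torsion-free module over the domain $\CC[S_{\sigma}]\cong(R_{x^{\hat{\sigma}}})_{0}$.

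For (i), the map $\chi^{\sigma}_{m,m'}\colon E^{\sigma}_{m}\to E^{\sigma}_{m'}$ is, by definition, multiplication by the element $\chi^{m'-m}\in\CC[S_{\sigma}]$, which is nonzero since $m'-m\in S_{\sigma}$ and $\CC[S_{\sigma}]$ is a domain; multiplication by a nonzero element of the domain $\CC[S_{\sigma}]$ on a torsion-free $\CC[S_{\sigma}]$-module is injective, which is exactly the claim. For (ii), the transition map $i^{\sigma\tau}_{m}\colon E^{\sigma}_{m}\to E^{\tau}_{m}$ is induced by the localization morphism $E^{\sigma}\to E^{\sigma}_{\chi^{m_{0}}}\cong E^{\tau}$ (for the appropriate $m_{0}\in M$ with $S_{\tau}=S_{\sigma}+\ZZ\langle m_{0}\rangle$); localization of a torsion-free module over a domain is injective, and passing to a single graded piece preserves injectivity, so $i^{\sigma\tau}_{m}$ is injective. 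For (iii), when $\sigma=\{0\}$ we have $S_{\{0\}}=M$ and $\CC[S_{\{0\}}]=\CC[M]$, the Laurent polynomial ring; here the preorder $\leq_{\{0\}}$ is such that every $m\leq_{\{0\}}m'$ and $m'\leq_{\{0\}}m$, so by the remark just before the definition of a $\sigma$-family every $\chi^{\{0\}}_{m,m'}$ is an isomorphism. Thus all the $E^{\{0\}}_{m}$ are identified with a single vector space $\bE$, and since $E^{\{0\}}=E\otimes_{R}\CC[M][\text{units}]$ has $\CC[M]$-rank equal to $\rk E=\l$ while being a module over the field of fractions' analogue — more precisely $E^{\{0\}}\cong\bE\otimes_{\CC}\CC[M]$ with $\bE$ of dimension $\l$ — we get $\bE\cong\CC^{\l}$.

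The commutative diagram is then essentially formal: the maps $\varphi_{m}\colon E^{\{0\}}_{m}\xrightarrow{\cong}\bE$ are the structural identifications from (iii), the left vertical map $\chi^{\{0\}}_{m',m}$ is an isomorphism compatible with the $\varphi$'s (this compatibility is built into how $\bE$ is defined, since $\varphi_{m}=\varphi_{m'}\circ(\chi^{\{0\}}_{m',m})^{-1}$), the horizontal maps $E^{\sigma}_{\bullet}\to E^{\{0\}}_{\bullet}$ are the transition maps $i^{\sigma\{0\}}_{\bullet}$ (injective by (ii)), and the right square commutes because both $\chi^{\{0\}}$ and $\chi^{\sigma}$ are multiplication by the same element $\chi^{m'-m}$, compatibly with the $R$-linear (hence $\chi^{m'-m}$-linear) transition maps. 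I would assemble these two squares into the stated diagram and note that every arrow in sight is either an isomorphism (left vertical, and the $\varphi$'s) or an injection.

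The only genuine subtlety — and the step I'd flag as the main obstacle — is part (iii): one must be careful that $E^{\{0\}}$ really is a free $\CC[M]$-module of rank $\l$, or at least that all its graded pieces $E^{\{0\}}_{m}$ have the same finite dimension $\l$ over $\CC$. The cleanest route is to observe that $\CC[M]$ is the coordinate ring of the torus $\TT_{N}$, $\widetilde{E}|_{\TT_{N}}$ is an equivariant (hence, being torsion-free of rank $\l$ on a smooth variety and restricted to the torus where it is locally free) quasi-coherent sheaf whose space of sections decomposes into one-dimensional $\TT_{N}$-weight spaces, each isomorphic to $\bE$; equivalently $E\otimes_{R}\CC[M]_{(0)}$ — the degree-zero part after inverting all variables — is a $\CC[M]$-module which, because the $\TT_{N}$-action makes it a direct sum over $M$ of its weight spaces all linked by isomorphisms $\chi^{\{0\}}_{m,m'}$, is forced to be $\bE\otimes_{\CC}\CC[M]$ with $\dim_{\CC}\bE=\rk E=\l$. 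Everything else is a direct consequence of "multiplication by a nonzero element of a domain is injective on a torsion-free module" applied in the appropriate grading.
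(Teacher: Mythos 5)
Your proof is correct, and it is worth noting that the paper gives no argument of its own here: it simply cites Section 4.4 of Perling's thesis, so your write-up supplies exactly the argument being outsourced (torsion-freeness of $E^{\sigma}$ over the domain $\CC[S_{\sigma}]\cong(R_{x^{\hat{\sigma}}})_{0}$ and injectivity of multiplication by nonzero ring elements for (i), injectivity of the graded localization map $E^{\sigma}\to E^{\sigma}_{\chi^{m_{0}}}\cong E^{\tau}$ for (ii), trivialization over the dense orbit for (iii), and the diagram commuting because the transition maps are $\CC[S_{\sigma}]$-linear while the $\varphi_{m}$ are defined through the isomorphisms $\chi^{\{0\}}_{m',m}$, consistently by transitivity). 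Two spots deserve polishing, though neither is a genuine gap. In (iii), the parenthetical claim that torsion-freeness alone makes $\widetilde{E}|_{\TT_{N}}$ locally free is unjustified as stated: the non-locally-free locus of a torsion-free sheaf merely has codimension at least $2$, which does not keep it off the torus; what actually closes the argument is either equivariance (that locus is closed and invariant, hence cannot meet the dense orbit) or, better, the weight-space computation you then perform, which shows that multiplication $\bE\otimes_{\CC}\CC[M]\to E^{\{0\}}$ is an isomorphism degree by degree, so $E^{\{0\}}$ is free of rank $\dim_{\CC}\bE$; also ``one-dimensional weight spaces'' should read ``weight spaces indexed by $M$, each isomorphic to $\bE$''. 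Finally, the equality $\dim_{\CC}\bE=\rk E$ merits one explicit line: multiplication by monomials (units in $R_{x_{1}\dotsm x_{r}}$) identifies every $\Cl(X)$-graded piece of $E_{x_{1}\dotsm x_{r}}$ with $E^{\{0\}}$, so $E_{x_{1}\dotsm x_{r}}\cong E^{\{0\}}\otimes_{\CC[M]}R_{x_{1}\dotsm x_{r}}$ and the generic rank of $E$ coincides with the $\CC[M]$-rank of the free module $E^{\{0\}}$, namely $\dim_{\CC}\bE$.
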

\begin{proof}
See \cite[Section 4.4]{Perling}.
\end{proof}

Notice that any monomial ideal $I\hspace{-1mm}=\hspace{-1mm}\langle m_{1},\dotsc,m_{s}\rangle\hspace{-1mm}\subset\hspace{-1mm} R$ is a $(\Cl(X),\ZZ^{r})-$gra\-ded torsion-free $R-$module. Next example illustrates Proposition \ref{Proposition:Torsionfrees}.

\begin{example}
Let $R=\CC[x_{0},x_{1},x_2]$ be the Cox ring of $\PP^{2}$ with fan $\Sigma$ as in Example \ref{Example:Projective space}, and let $I=(x_{2}^2,x_{0}x_{2},x_{0}x_{1})$ be a monomial ideal. Let us compute the $\Sigma-$family associated to $I$. We present $I$ as follows:
\begin{equation}\label{Eq:Example monomial ideal}
R(0,0,-2)\oplus R(-1,0,-1)\oplus R(-1,-1,0)\xrightarrow{(x_{2}^2\;x_{0}x_{2}\;x_{0}x_{1})}I\rightarrow0.
\end{equation}
Next, we localize at $x^{\widehat{\{0\}}}=x_{0}x_{1}x_{2}$ and we set $R^{\{0\}}:=R_{x^{\widehat{\{0\}}}}$ the localized ring. For any multidegree $(\alpha_{0},\alpha_{1},\alpha_{2})\in\ZZ^{3}$, $R^{\{0\}}_{(\alpha_{0},\alpha_{1},\alpha_{2})}=\CC\langle x_{0}^{\alpha_{0}}x_{1}^{\alpha_{1}}x_{2}^{\alpha_{2}}\rangle$, the vector space spanned by the monomial $x_{0}^{\alpha_{0}}x_{1}^{\alpha_{1}}x_{2}^{\alpha_{2}}$. On the other hand, any character $m=(d_{1},d_{2})$ is embedded as $m=(-d_{1}-d_{2},d_{1},d_{2})$ in $\ZZ^{3}$ via the exact sequence (\ref{Eq:Class group exact sequence}). To compute $I^{\{0\}}_{m}$ we take the degree $m$ component of (\ref{Eq:Example monomial ideal}). This yields the following exact sequence of vector spaces
\[
R^{\{0\}}(0,0,-2)_{m}\oplus R^{\{0\}}\!(-1,0,-1)_{m}\oplus R^{\{0\}}\!(-1,-1,0)_{m}\xrightarrow{(x_{2}^2\;x_{0}x_{2}\;x_{0}x_{1})}I^{\{0\}}_{m}\rightarrow0.
\]
Thus, $I^{\{0\}}_{m}=\CC\langle x_{0}^{-d_{1}-d_{2}}x_{1}^{d_{1}}x_{2}^{d_{2}}\rangle$ and there are isomorphisms $\phi^{\{0\}}_{m}:I^{\{0\}}_{m}\rightarrow \CC\langle u_{1}\rangle=:\bI$. 
Let us now fix the ray $\rho_{0}\in\Sigma(1)$ and compute $I^{\rho_{0}}_{m}$ for any character $m=(d_{1},d_{2})\in\ZZ^{2}$. As before, we set $R^{\rho_{0}}:=R_{x^{\widehat{\rho_{0}}}}$ the localization at $x^{\widehat{\rho_{0}}}=x_{1}x_{2}$. Now, for any multidegree $(\alpha_{0},\alpha_{1},\alpha_{2})\in\ZZ^{3}$, 
\[
R^{\rho_{0}}_{(\alpha_{0},\alpha_{1},\alpha_{2})}=
\left\{
\begin{array}{ll}
\CC\langle x_{0}^{\alpha_{0}}x_{1}^{\alpha_{1}}x_{2}^{\alpha_{2}}\rangle,&\text{if}\quad \alpha_{0}\geq0\\
0,&\text{if}\quad \alpha_{0}\leq-1
\end{array}
\right.
\]
and restricting the exact sequence (\ref{Eq:Example monomial ideal}) to degree $m=(d_{1},d_{2})$, we have
\[
I^{\rho_{0}}_{m}=
\left\{
\begin{array}{ll}
\CC\langle x_{0}^{-d_{1}-d_{2}}x_{1}^{d_{1}}x_{2}^{d_{2}}\rangle\cong \bI,&\text{if}\quad -d_{1}-d_{2}\geq0\\
0,&\text{if}\quad -d_{1}-d_{2}\leq-1.
\end{array}
\right.
\]
Similarly, we obtain
\[
I^{\rho_{1}}_{m}\cong
\left\{
\begin{array}{ll}
\bI,&\text{if}\quad d_{1}\geq0\\
0,&\text{if}\quad d_{1}\leq-1
\end{array}
\right.
\quad
I^{\rho_{2}}_{m}\cong
\left\{
\begin{array}{ll}
\bI,&\text{if}\quad d_{2}\geq0\\
0,&\text{if}\quad d_{2}\leq-1.
\end{array}
\right.
\]
It only remains to compute the components in the $\Sigma-$family associated to the two dimensional cones in $\Sigma$. Let us consider $\sigma_{0}\in\Sigma(2)$ with rays $\sigma_{0}(1)=\{\rho_{1},\rho_{2}\}$. We set $R^{\sigma_{0}}:=R_{x^{\widehat{\sigma_{0}}}}$ the localization at $x^{\widehat{\sigma_{0}}}=x_{0}$ and for any multidegree $(\alpha_{0},\alpha_{1},\alpha_{2})\in\ZZ^{3}$, 
\[
R^{\sigma_{0}}_{(\alpha_{0},\alpha_{1},\alpha_{2})}=
\left\{
\begin{array}{ll}
\CC\langle x_{0}^{\alpha_{0}}x_{1}^{\alpha_{1}}x_{2}^{\alpha_{2}}\rangle,&\text{if}\quad \alpha_{1}\geq0,\;\alpha_{2}\geq0\\
0,&\text{if}\quad \alpha_{1}\leq-1\;\text{or}\;\alpha_{2}\leq -1.
\end{array}
\right.
\]
As before, taking the component of degree $m=(d_{1},d_{2})$ of (\ref{Eq:Example monomial ideal}) we obtain
\[
I^{\sigma_{0}}_{m}\cong
\left\{
\begin{array}{llll}
\bI,&\text{if}& d_{1}=0\;\text{and}\;d_{2}\geq1,\;\text{or}\;\\
&&d_{1}\geq1\;\text{and}\;d_{2}\geq0 \\
0,&& \text{otherwise}.
\end{array}
\right.
\]
Similarly, we obtain the remaining components of the $\Sigma-$family:
\[
I^{\sigma_{1}}_{m}\cong
\left\{
\begin{array}{lll}
\bI,&\text{if}& -d_{1}-d_{2}=0\;\text{and}\;d_{2}\geq2,\;\text{or}\;\\
&&-d_{1}-d_{2}\geq1\;\text{and}\;d_{2}\geq0 \\
0,&&\text{otherwise}
\end{array}
\right.
\]
\[
I^{\sigma_{2}}_{m}\cong
\left\{
\begin{array}{ll}
\bI,&\text{if}\quad -d_{1}-d_{2}\geq0\;\text{and}\;d_{1}\geq0\\
0,& \text{otherwise}.
\end{array}
\right.
\]
\end{example}
In \cite{MR-SM} the authors study in more detail the monomial ideals from the perspective of Klyachko filtrations. However, in the sequel we assume that $\cE$ is a reflexive equivariant sheaf of rank $\l$ on $X$. For any $\sigma\in\Sigma$, let $Z=\cup_{\dim\tau\geq 2} V(\tau)\subset U_{\sigma}$ be the union of the orbits of codimension $\geq 2$ with $\tau\prec\sigma$. Notice that $U_{\sigma}\setminus Z=\cup_{\rho\in\sigma(1)}U_{\rho}$ and 
\[
\Gamma(U_{\sigma},\cE)\cong
\Gamma(\bigcup_{\rho\in\sigma(1)}U_{\rho},\cE)\cong\bigcap_{\rho\in\sigma(1)}\Gamma(U_{\rho},\cE),\;\text{for any}\;\sigma\in\Sigma.
\]
Therefore, for any character $m\in M$ we have $
E^{\sigma}_{m} \cong\bigcap_{\rho\in\sigma(1)}E^{\rho}_{m}$. Thus, a $\sigma-$family is determined completely by the $\rho-$families with $\rho\in\sigma(1)$. 
Let $\rho\in\Sigma(1)$ be a ray. For any characters $m'\leq_{\rho} m$, $\chi^{\rho}_{m',m}:E^{\rho}_{m'}\rightarrow E^{\rho}_{m}$ is an isomorphism if and only if $\langle m'-m,n(\rho)\rangle=0$. Hence, for any $i\in\ZZ$ there is a unique subspace $E^{\rho}(i)\subset\CC^{\l}$ such that $E^{\rho}_{m}\cong E^{\rho}(i)$ if and only if $\langle m, n(\rho)\rangle=i$. Thus, any $\rho-$family $\hat{E}^{\rho}$ is an increasing filtration $\{E^{\rho}(i)\}_{i\in\ZZ}$ of $\CC^{\l}$ such that that $E^{\rho}(i)=0$ for $i\ll 0$ and $E^{\rho}(i)=\CC^{\l}$ for $i\gg 0$. Using this set of filtrations we have the following result:
\begin{proposition}\label{Proposition:HH0}
Let $E$ be a $(\Cl(X),\ZZ^{r})-$graded reflexive $R-$module, and $\{E^{\rho}\}_{\rho\in\Sigma(1)}$ its associated set of filtrations. Then, for any $m\in M$, 
\begin{itemize}
\item $\displaystyle{H^{0}(X, \widetilde{E})_{m} \cong \bigcap_{\rho \in \Sigma(1)} E^{\rho}(\langle m, \rho \rangle)}$.
\item $\displaystyle{H^{n}(X, \widetilde{E})_{m} \cong \bE/\sum_{\rho \in \Sigma(1)} E^{\rho}(\langle m, \rho \rangle)}$.
\item $\displaystyle{\chi(\widetilde{E})_{m} = \sum_{\sigma \in \Sigma} (-1)^{\codim \sigma} \dim_{k} E^{\sigma}_{m}}$.
\end{itemize}
\end{proposition}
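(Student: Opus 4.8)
The plan is to prove the three isomorphisms separately: the formula for $H^{0}$ by a hands-on description of global sections, the one for $\chi$ from the Čech complex of the standard affine cover, and the one for $H^{n}$ by dualizing the first via Serre duality. For $H^{0}(X,\widetilde E)_{m}$: the maximal cones cover $X$ by affine opens, and since $\widetilde E$ is torsion-free every restriction map to $\TT_{N}=U_{\{0\}}$ is injective (Proposition \ref{Proposition:Torsionfrees}), so a global section is exactly an element of $\Gamma(\TT_{N},\widetilde E)$ that lies in $\Gamma(U_{\sigma},\widetilde E)$ for every $\sigma\in\Sigma_{\max}$; hence $H^{0}(X,\widetilde E)=\bigcap_{\sigma\in\Sigma_{\max}}\Gamma(U_{\sigma},\widetilde E)$ inside $\Gamma(\TT_{N},\widetilde E)$. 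Passing to the degree-$m$ part and using (\ref{Eq:Sigma family decomposition}) gives $H^{0}(X,\widetilde E)_{m}=\bigcap_{\sigma\in\Sigma_{\max}}E^{\sigma}_{m}$. Now I invoke the reflexivity of $\widetilde E$ in the form already recorded before the statement, $E^{\sigma}_{m}=\bigcap_{\rho\in\sigma(1)}E^{\rho}_{m}$ inside $\bE$, together with the fact that every ray of $\Sigma$ lies in $\sigma(1)$ for some maximal $\sigma$; this collapses the iterated intersection to $\bigcap_{\rho\in\Sigma(1)}E^{\rho}_{m}$, and by definition of the filtration $E^{\rho}_{m}=E^{\rho}(\langle m,n(\rho)\rangle)$, which is the claim.

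For the Euler characteristic, the idea is that the Čech complex of the cover $\{U_{\sigma}\}_{\sigma\in\Sigma_{\max}}$ computes $H^{\bullet}(X,\widetilde E)$ — it is an acyclic cover because every intersection $U_{\sigma_{0}}\cap\cdots\cap U_{\sigma_{p}}=U_{\sigma_{0}\cap\cdots\cap\sigma_{p}}$ is affine — and in degree $m$ its $p$-th term is $\bigoplus_{\{\sigma_{0},\dots,\sigma_{p}\}\subseteq\Sigma_{\max}}E^{\sigma_{0}\cap\cdots\cap\sigma_{p}}_{m}$. Taking the alternating sum of dimensions and regrouping the contributions by the cone $\tau=\sigma_{0}\cap\cdots\cap\sigma_{p}$ (an intersection of cones of a fan is again a cone of $\Sigma$) yields $\chi(\widetilde E)_{m}=\sum_{\tau\in\Sigma}a(\tau)\dim_{\CC}E^{\tau}_{m}$, where $a(\tau)=\sum(-1)^{|J|-1}$, the sum over nonempty $J\subseteq\Sigma_{\max}$ with $\bigcap_{\sigma\in J}\sigma=\tau$. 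The crux — and the step I expect to be the real obstacle — is the combinatorial identity $a(\tau)=(-1)^{\codim\tau}$ for every $\tau\in\Sigma$. I would prove it by checking that both sides satisfy $\sum_{\tau'\succeq\tau}(\,\cdot\,)=1$ for all $\tau$ and then inverting this relation over the poset of cones (equivalently, inducting downward on $\codim\tau$): for $a$ this is the computation $\sum_{\emptyset\neq J\subseteq\mathrm{St}(\tau)}(-1)^{|J|-1}=1$ after sorting the $J$'s by their common intersection, where $\mathrm{St}(\tau)$ is the nonempty set of maximal cones through $\tau$; for $(-1)^{\codim}$ it reduces to $\sum_{\bar\sigma\in\Sigma'}(-1)^{\codim\bar\sigma}=1$ for the complete star fan $\Sigma'=\mathrm{Star}(\tau)$ in $N_{\RR}/\mathrm{span}(\tau)$, which follows from $\chi(S^{k-1})=1+(-1)^{k-1}$ applied to the regular CW-decomposition of the sphere induced by $\Sigma'$. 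Granting the identity, the third formula reads off directly.

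For $H^{n}$ I would not argue directly but dualize. Serre duality on the smooth complete $n$-dimensional $X$ gives a $\TT_{N}$-equivariant isomorphism $H^{n}(X,\widetilde E)_{m}\cong\bigl(\Hom_{\mathcal O_{X}}(\widetilde E,\omega_{X})_{-m}\bigr)^{\ast}=\bigl(H^{0}(X,\widetilde E^{\vee}\otimes\omega_{X})_{-m}\bigr)^{\ast}$, and $\widetilde E^{\vee}\otimes\omega_{X}$ is again an equivariant reflexive sheaf, so the first statement applies to it. It then remains to read its filtrations off those of $\widetilde E$: using Klyachko's rules $(\widetilde E^{\vee})^{\rho}(i)=\mathrm{Ann}\,E^{\rho}(-i-1)$ and $(\mathcal F\otimes\mathcal O_{X}(\textstyle\sum_{\rho}b_{\rho}D_{\rho}))^{\rho}(i)=\mathcal F^{\rho}(i+b_{\rho})$ (see \cite{Kly89,Perling}) together with $\omega_{X}=\mathcal O_{X}(-\sum_{\rho}D_{\rho})$, one obtains $(\widetilde E^{\vee}\otimes\omega_{X})^{\rho}(i)=\mathrm{Ann}\,E^{\rho}(-i)$; feeding this into the $H^{0}$-formula gives $H^{0}(X,\widetilde E^{\vee}\otimes\omega_{X})_{-m}\cong\mathrm{Ann}\bigl(\sum_{\rho}E^{\rho}(\langle m,n(\rho)\rangle)\bigr)=\bigl(\bE/\sum_{\rho}E^{\rho}(\langle m,n(\rho)\rangle)\bigr)^{\ast}$, and dualizing once more yields $H^{n}(X,\widetilde E)_{m}\cong\bE/\sum_{\rho}E^{\rho}(\langle m,n(\rho)\rangle)$.

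Finally, I note that a more uniform route would be to first prove that $H^{\bullet}(X,\widetilde E)_{m}$ is computed by the cochain complex with $p$-th term $\bigoplus_{\sigma\in\Sigma(n-p)}E^{\sigma}_{m}$ and differentials the signed generization maps $i^{\sigma\tau}$; from such a statement the first formula is the kernel of $C^{0}_{m}\to C^{1}_{m}$ — identified with $\bigcap_{\rho}E^{\rho}_{m}$ as in the first paragraph, using that a facet of a complete fan belongs to exactly two maximal cones and that the dual graph is connected — the second is the cokernel of $C^{n-1}_{m}\to C^{n}_{m}$, and the third is the alternating sum of the dimensions. However, justifying that this complex computes cohomology is essentially equivalent to the work in the two Čech-type arguments above, so I would present the three cases as outlined.
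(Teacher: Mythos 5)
The paper offers no internal argument here: Proposition \ref{Proposition:HH0} is quoted directly from Klyachko \cite{Kly89,Kly91} (see also \cite{Perling}), so your proposal is compared against Klyachko's original proofs rather than anything in the text. Your reconstruction is correct and, in outline, close to Klyachko's. The $H^{0}$ computation is right: torsion-freeness (Proposition \ref{Proposition:Torsionfrees}) makes all restrictions to the torus injective, so $H^{0}(X,\widetilde{E})_{m}=\bigcap_{\sigma\in\Sigma_{\max}}E^{\sigma}_{m}$ inside $\bE$, and reflexivity collapses this to $\bigcap_{\rho}E^{\rho}(\langle m,n(\rho)\rangle)$. The Euler-characteristic argument is also sound: intersections of cones of $\Sigma$ are cones of $\Sigma$, every cone of a complete fan is contained in a maximal one (so the star is nonempty), and your verification of $a(\tau)=(-1)^{\codim\tau}$ — both sides sum to $1$ over the cones containing $\tau$, the second via $\chi(S^{k-1})$ for the complete quotient fan, then downward induction from the maximal cones — checks out; this is essentially Klyachko's combinatorics. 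The Serre-duality derivation of the $H^{n}$ formula works as well, and the conventions you use are consistent with this paper: $(\widetilde{E}^{\vee})^{\rho}(i)=\operatorname{Ann}E^{\rho}(-i-1)$ agrees with the increasing-filtration convention (test it on $\cO(D)$), and with Proposition \ref{Proposition:twist} and $\omega_{X}=\cO_{X}(-\sum_{\rho}D_{\rho})$ it gives $(\widetilde{E}^{\vee}\otimes\omega_{X})^{\rho}(i)=\operatorname{Ann}E^{\rho}(-i)$ as you claim. Two small points you use implicitly and should record: the degree matching $H^{n}(X,\widetilde{E})_{m}\cong\bigl(\Hom(\widetilde{E},\omega_{X})_{-m}\bigr)^{\ast}$ rests on $H^{n}(X,\omega_{X})$ being one-dimensional of weight $0$, and $\widetilde{E}^{\vee}\otimes\omega_{X}$ must be noted to be coherent, equivariant and reflexive so that the first bullet applies to it. Compared with Klyachko's route (the cone-indexed complex you sketch at the end, or local cohomology), your version trades that construction for two standard inputs — \v{C}ech acyclicity on the affine toric cover and Serre duality — so that only the third bullet needs the combinatorial identity, while the first and last are obtained by elementary sheaf-theoretic arguments; this is a perfectly acceptable, slightly more classical, packaging of the same content.
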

\begin{proof}
See \cite[Theorem 4.1.1, Remark 4.1.2 and Corollary 4.1.3]{Kly89}, and \cite[Metatheorem 1.3.3]{Kly91}.
\end{proof}

\begin{notation}\label{Notation:Reflexives}
For each ray $\rho$, let us denote the associated filtration as $\hat{E}^{\rho}=E^{\rho}(i^{\rho}_{1},\dotsc,i^{\rho}_{\l-1},i^{\rho}_{\l};E^{\rho}_{1},\dotsc,E^{\rho}_{\l-1},E^{\rho}_{\l})$, meaning
\[
E^{\rho}(i)=
\left\{
\begin{array}{@{}l@{\hspace{1mm}}l}
0,              & i \leq i^{\rho}_{1}-1                     \\
E^{\rho}_{1},   & i^{\rho}_{1} \leq i \leq i^{\rho}_{2}-1   \\
\vdots          &                                           \\
E^{\rho}_{\l-1}, & i^{\rho}_{\l-1} \leq i \leq i^{\rho}_{\l}-1 \\
E^{\rho}_{\l},   & i^{\rho}_{\l} \leq i
\end{array}
\right.\hspace{-3mm}\;\text{where}\;
\left\{
\begin{array}{@{}l@{}}
0\subseteq E^{\rho}_{1} \subseteq \dotsb \subseteq E^{\rho}_{\l-1} \subseteq E^{\rho}_{\l}=\CC^{\l}\\
i^{\rho}_{1} \leq \dotsb \leq i^{\rho}_{\l-1} \leq i^{\rho}_{\l}\\
i^{\rho}_{j} = i^{\rho}_{j+1} \Leftrightarrow E^{\rho}_{j}=E^{\rho}_{j+1}.
\end{array}
\right.
\]
If $\Sigma(1)=\{\rho_{1},\dotsc,\rho_{r}\}$ is the (ordered) set of rays, we write $\hat{E}^{k}:=\hat{E}^{\rho_{k}}$, $i^{k}_{t}:=i^{\rho_{k}}_{t}$ and $E^{k}_{t}:=E^{\rho_{k}}_{t}$, for any $1\leq k\leq r$ and $1\leq t\leq \l$.
\end{notation}

\begin{example}
Given a Weil divisor $D=\sum_{\rho} a_{\rho} D_{\rho}$, the line bundle $\cO(D)$ corresponds to the following set of filtrations of $\CC$:
\[
O^{\rho}( i ) = 
\left\{
\begin{array}{@{}ll}
	0,	&i < -a_{\rho}	\\
	\CC,	&i \geq -a_{\rho}.
\end{array}
\right.
\]
This follows from Proposition \ref{Proposition:line bundles}, since $\cO(D)\cong\widetilde{R([\sum_{\rho}a_{\rho}D_{\rho}])}$.
\end{example}

More generally, the following result shows the effect of twisting by $\alpha\in\Cl(X)$ on the set of filtrations.

\begin{proposition}\label{Proposition:twist}
Let $E$ be a multigraded reflexive module of rank $\l$ with associated filtrations $E^{\rho}(i^{\rho}_{1},\dotsc,i^{\rho}_{\l};E^{\rho}_{1},\dotsc,E^{\rho}_{\l})$, and let $\alpha\in\Cl(X)$. The associated filtration of $E(\alpha)$ is given by $E^{\rho}(i^{\rho}_{1}-a_{\rho}, \dotsc, i^{\rho}_{\l}-a_{\rho};E^{\rho}_{1},\dotsc, E^{\rho}_{\l})$, where $D=\sum_{\rho}a_{\rho}D_{\rho}$ is any Weil divisor on $X$ such that $[D]=\alpha$. 
\end{proposition}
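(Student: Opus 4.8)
The plan is to reduce the statement to a direct computation with the $\Sigma$-families of $E$ and $E(\alpha)$, using the correspondence between twisting and shifting multidegrees. First I would fix a Weil divisor $D = \sum_{\rho} a_{\rho} D_{\rho}$ with $[D] = \alpha$, and recall that the twist $E(\alpha)$ is the module whose graded pieces are $E(\alpha)_{\beta} = E_{\beta + \alpha}$ for $\beta \in \Cl(X)$. Passing to the localizations, for any cone $\sigma$ and any character $m \in M$ one has $E(\alpha)^{\sigma}_{m} = (E_{x^{\hat\sigma}}(\alpha))_{\phi(m)} = (E_{x^{\hat\sigma}})_{\phi(m) + \alpha}$. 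The key point is to identify this with an $E^{\sigma}_{m'}$ for a suitably shifted character $m'$: since $\pi(\sum_{\rho} a_{\rho} D_{\rho}) = \alpha$, working modulo $\im\phi$ we may as well track everything through the $M$-grading on each $\Cl(X)$-component.

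Next I would specialize to a single ray $\rho \in \Sigma(1)$, which suffices because by the reflexive case (the discussion preceding Proposition~\ref{Proposition:HH0}) every $\sigma$-family is determined by the $\rho$-families for $\rho \in \sigma(1)$, and twisting commutes with the intersections $E^{\sigma}_{m} \cong \bigcap_{\rho \in \sigma(1)} E^{\rho}_{m}$. Recall that $E^{\rho}(i)$ is the common value of $E^{\rho}_{m}$ over all $m$ with $\langle m, n(\rho)\rangle = i$. So I need to compute, for the twisted module, the common value of $E(\alpha)^{\rho}_{m}$ on the set $\langle m, n(\rho)\rangle = i$. Using Proposition~\ref{Proposition:line bundles} one has an identification $\cO(D)^{\rho}(j) = \CC$ for $j \geq -a_{\rho}$ and $0$ otherwise; tensoring the $\Sigma$-family of $E$ with that of $\cO(D)$ (which is the $\Sigma$-family realization of $E(\alpha) \cong E \otimes \cO(D)$ on the sheaf level, since $\widetilde{E(\alpha)} = \widetilde{E}(\alpha) = \widetilde{E} \otimes \cO(D)$) multiplies the filtration by $\chi^{-a_{\rho}n(\rho)^{\vee}}$-type shift — concretely, $E(\alpha)^{\rho}(i) = E^{\rho}(i + a_{\rho})$. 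This is the content of the claimed formula: the subspaces $E^{\rho}_{t}$ are unchanged while each jump index $i^{\rho}_{t}$ is replaced by $i^{\rho}_{t} - a_{\rho}$, because $E(\alpha)^{\rho}(i) = E^{\rho}_{t}$ precisely when $E^{\rho}(i + a_{\rho}) = E^{\rho}_{t}$, i.e. when $i^{\rho}_{t} - a_{\rho} \leq i \leq i^{\rho}_{t+1} - a_{\rho} - 1$.

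Finally I would check that the formula is well defined, i.e. independent of the representative $D$ of $\alpha$: if $D' = D + \Div(\chi^{m})$ for some $m \in M$ then $a'_{\rho} = a_{\rho} + \langle m, n(\rho)\rangle$, and the resulting set of filtrations $E^{\rho}(i^{\rho}_{t} - a_{\rho} - \langle m, n(\rho)\rangle; E^{\rho}_{t})$ differs from $E^{\rho}(i^{\rho}_{t} - a_{\rho}; E^{\rho}_{t})$ by the reindexing $m'' \mapsto m'' + m$ on characters, which induces an isomorphism of $\Sigma$-families and hence, by Proposition~\ref{Proposition:Delta family and quasi-coherent sheaves}, the same equivariant sheaf (reflecting that $\cO(D) \cong \cO(D')$). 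The main obstacle I anticipate is purely bookkeeping: keeping the sign conventions straight between $\langle m, n(\rho)\rangle$, the definition of $\cO(D)^{\rho}$ with the $-a_{\rho}$ threshold, and the direction of the shift, so that one indeed lands on $i^{\rho}_{t} - a_{\rho}$ and not $i^{\rho}_{t} + a_{\rho}$; once the conventions are pinned down via the line bundle case the general reflexive case follows formally.
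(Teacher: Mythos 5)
Your argument is correct, but note that the paper itself offers no proof here: it simply cites Perling (Section 4.7), so your write-up is a self-contained substitute rather than a parallel of the paper's text. The cleanest part of your proposal is actually the first paragraph: once you fix the lift $\underline{a}=(a_{\rho})_{\rho}\in\ZZ^{r}$ of $\alpha$ determined by $D$, you have $E(\alpha)^{\rho}_{m}=(E_{x^{\hat{\rho}}})_{\phi(m)+\underline{a}}$, and since every variable except $x_{\rho}$ is invertible in $E_{x^{\hat{\rho}}}$, this graded piece depends only on the $\rho$-coordinate $\langle m,n(\rho)\rangle+a_{\rho}$; this immediately gives $E(\alpha)^{\rho}(i)=E^{\rho}(i+a_{\rho})$ and hence the shift of the jump indices to $i^{\rho}_{t}-a_{\rho}$, with the subspaces $E^{\rho}_{t}$ unchanged. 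Your middle step phrased as ``tensoring the $\Sigma$-family of $E$ with that of $\cO(D)$'' is the only loosely justified point: the notation $\chi^{-a_{\rho}n(\rho)^{\vee}}$ is not meaningful as written, and the fact that tensoring with a line bundle shifts the Klyachko filtration is exactly what is being proved, so invoking it risks circularity unless you either prove the (easy) tensor formula $(E\otimes\cO(D))^{\rho}(i)=E^{\rho}(i+a_{\rho})$ or, better, bypass it by the direct degree computation above. Likewise, writing $(E_{x^{\hat{\sigma}}})_{\phi(m)+\alpha}$ mixes the $\ZZ^{r}$- and $\Cl(X)$-gradings; the shift must be by the lift $\underline{a}$, which is precisely why the answer depends on the chosen divisor $D$ only up to the reindexing by $\im\phi$ that you correctly handle in your final well-definedness check (consistent with the line bundle example in the paper, where $R(\alpha)$ jumps at $-a_{\rho}$). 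With these two bookkeeping points tightened, your proof is complete and arguably more informative to the reader than the paper's bare citation.
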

\begin{proof}
See \cite[Section 4.7]{Perling}.
\end{proof}

\begin{lemma}\label{Lemma:Bound reflexive filtration}
Let $E$ be a multigraded reflexive module of rank $\l$ with associated filtrations $E^{\rho}(i^{\rho}_{1},\dotsc,i^{\rho}_{\l};E^{\rho}_{1},\dotsc,E^{\rho}_{\l})$. Assume $E$ is presented as a quotient
\[
\bigoplus_{i=1}^{b_{2}}R(\um^{i})\xrightarrow{\phi}\bigoplus_{j=1}^{b_{1}}R(\umu^{j})\rightarrow E\rightarrow 0,\qquad \um^{i},\umu^{j}\in\ZZ^{r}.
\]
Then, $-\max_{j}\{\umu^{j}_{\rho}\}\leq i_{1}^{\rho}$ and $i_{\l}^{\rho}\leq -\min_{i}\{m^{i}_{\rho}\}$.
\end{lemma}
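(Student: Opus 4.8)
The strategy is to extract bounds on the jumping numbers $i^\rho_1$ and $i^\rho_\ell$ directly from the two-term presentation, by applying Klyachko's dictionary (Propositions 1.20, 1.22, 1.24 and the preceding discussion) to a free module and then using exactness.

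\textbf{Step 1: the filtration of a free module.} First I would record what the set of filtrations looks like for a single twist $R(\um)$ with $\um = (m_\rho)_\rho \in \ZZ^r$. By the Example following Notation 1.23, the line bundle $\cO(\sum_\rho a_\rho D_\rho)$ has filtration $O^\rho(i) = 0$ for $i < -a_\rho$ and $O^\rho(i)=\CC$ for $i\geq -a_\rho$. Since $R(\um)$ is free of rank one and corresponds to $\widetilde{R([\sum_\rho m_\rho D_\rho])}$, its filtration is the rank-one filtration jumping from $0$ to $\CC^1$ exactly at $i = -m_\rho$; equivalently the single jump $i^\rho_1 = -m_\rho$. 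Taking direct sums, the free module $\bigoplus_{j=1}^{b_1} R(\umu^j)$ has filtration in direction $\rho$ given by $F^\rho(i) = \bigoplus_{j:\,-\mu^j_\rho \leq i}\CC e_j$, so its smallest jump is at $i = \min_j(-\mu^j_\rho) = -\max_j\{\mu^j_\rho\}$ and its largest jump is at $i = \max_j(-\mu^j_\rho) = -\min_j\{\mu^j_\rho\}$. (Here $\{e_j\}$ is the standard basis of the rank-$b_1$ space.)

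\textbf{Step 2: pass to the quotient $E$.} The surjection $\bigoplus_j R(\umu^j) \twoheadrightarrow E$ induces, for each ray $\rho$ and each character $m$ with $\langle m,n(\rho)\rangle = i$, a surjection of the corresponding graded pieces $F^\rho(i) \twoheadrightarrow E^\rho(i)$ by exactness of the degree-$m$ component of a localization (localization and taking a graded piece are exact, cf.\ Lemma 1.4 and the construction of the $\Sigma$-family). Therefore $E^\rho(i)$ is a quotient of $F^\rho(i)$. Hence as soon as $F^\rho(i) = \CC^{b_1}$, i.e.\ once $i \geq -\min_i\{m^i_\rho\}$... wait, more carefully: once $i \geq -\min_j\{\mu^j_\rho\}$ we have $F^\rho(i)$ full, but the point for the \emph{upper} bound on $i^\rho_\ell$ comes from the \emph{kernel}, not the presentation's target. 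Let me instead argue the two inequalities separately.

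For the lower bound $-\max_j\{\mu^j_\rho\} \leq i^\rho_1$: if $i \leq -\max_j\{\mu^j_\rho\} - 1$, then $F^\rho(i) = 0$, so its quotient $E^\rho(i) = 0$, which forces $i \leq i^\rho_1 - 1$ by Notation 1.23; equivalently $i^\rho_1 \geq -\max_j\{\mu^j_\rho\}$.

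For the upper bound $i^\rho_\ell \leq -\min_i\{m^i_\rho\}$: here I use the full presentation $\bigoplus_i R(\um^i) \xrightarrow{\phi} \bigoplus_j R(\umu^j) \to E \to 0$. Localizing at $x^{\widehat{\rho}}$ and taking the degree-$m$ component with $\langle m,n(\rho)\rangle = i$ gives an exact sequence $G^\rho(i) \xrightarrow{\phi_i} F^\rho(i) \to E^\rho(i) \to 0$, where $G^\rho(i)$ is full (equal to $\CC^{b_2}$) once $i \geq \max_i(-m^i_\rho) = -\min_i\{m^i_\rho\}$, and simultaneously $F^\rho(i)$ is full once $i \geq -\min_j\{\mu^j_\rho\}$. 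The matrix of $\phi$ has entries that are $(\Cl(X),\ZZ^r)$-homogeneous polynomials; once both source and target are "saturated" in direction $\rho$, the image $\phi_i(G^\rho(i))$ stabilizes to a fixed subspace $W \subseteq \CC^{b_1}$, and then $E^\rho(i) = \CC^{b_1}/W$ is constant $= \bE = \CC^\ell$ for all such $i$. By Notation 1.23 this means $i^\rho_\ell \leq \max\{-\min_i\{m^i_\rho\},\, -\min_j\{\mu^j_\rho\}\}$; and since the columns of $\phi$ have degrees dominated by the $\um^i$ relative to the $\umu^j$ (each relation maps into the free module, so $\mu^j_\rho \geq$ is not automatic)...

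\textbf{The main obstacle.} The delicate point is precisely the upper bound: showing that $E^\rho(i)$ becomes \emph{all} of $\CC^\ell$ as soon as $i \geq -\min_i\{m^i_\rho\}$, without an extra contribution from the $\umu^j$. The clean way around this is to observe that for $E$ reflexive the filtration $E^\rho(i)$ equals $\bE$ for $i \gg 0$ (Proposition 1.22 / the discussion before it), and that $E^\rho(i)$ is the stable image of the composite at level $i$; the stabilization happens once the relation module $\bigoplus_i R(\um^i)$ has surjected onto the kernel of $\bigoplus_j R(\umu^j) \to E$ in every relevant degree, and a degree argument in the localization $R_{x^{\widehat\rho}}$ (where the only constraint is on the $\rho$-coordinate) shows this occurs at $i = -\min_i\{m^i_\rho\}$. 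I would make this precise by noting $E^\rho(i) \cong (E_{x^{\widehat\rho}})_0$-graded-piece and reading off, from the cokernel presentation localized at $x^{\widehat\rho}$, that multiplication by a suitable power of $x_\rho$ (which raises the $\rho$-degree) is surjective onto $E^\rho(i)$ from $E^\rho(i')$ for $i' < i$, so the filtration has stabilized by $i = -\min_i\{m^i_\rho\}$. Combining the two displayed inequalities then yields the claim.
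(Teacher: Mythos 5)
Your Step 1 and the lower bound are fine and agree with the paper's argument: for $\langle m,n(\rho)\rangle\leq -\max_{j}\{\mu^{j}_{\rho}\}-1$ every piece $R(\umu^{j})^{\rho}_{m}$ vanishes, hence $E^{\rho}_{m}=0$ and $i^{\rho}_{1}\geq-\max_{j}\{\mu^{j}_{\rho}\}$.

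The upper bound, however, is left with a genuine gap, which you acknowledge: what your argument actually yields is $i^{\rho}_{\l}\leq\max\{-\min_{i}\{m^{i}_{\rho}\},\,-\min_{j}\{\mu^{j}_{\rho}\}\}$, and the proposed repair (reflexivity, ``stabilization'', surjectivity of multiplication by powers of $x_{\rho}$) is not a proof: the claim that $E^{\rho}(i')\to E^{\rho}(i)$ is onto for $i\geq-\min_{i}\{m^{i}_{\rho}\}$ \emph{is} the stabilization statement you are trying to establish, and no degree computation is given that singles out $-\min_{i}\{m^{i}_{\rho}\}$ rather than $-\min_{j}\{\mu^{j}_{\rho}\}$. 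The missing ingredient, and the one the paper uses, is that $\phi$ is a matrix of $\ZZ^{r}$-homogeneous entries, i.e.\ monomials: a nonzero $(j,i)$ entry is a scalar multiple of $x^{\umu^{j}-\um^{i}}$, so $\umu^{j}-\um^{i}\in\NN^{r}$, in particular $\mu^{j}_{\rho}\geq m^{i}_{\rho}$. Since every generator is involved in some relation (every row of $\phi$ nonzero; otherwise $E$ splits off a free summand whose jump the relation degrees cannot control), this gives $\min_{j}\{\mu^{j}_{\rho}\}\geq\min_{i}\{m^{i}_{\rho}\}$, i.e.\ $-\min_{j}\{\mu^{j}_{\rho}\}\leq-\min_{i}\{m^{i}_{\rho}\}$. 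Hence for $\langle m,n(\rho)\rangle\geq-\min_{i}\{m^{i}_{\rho}\}$ all pieces $R(\um^{i})^{\rho}_{m}$ and $R(\umu^{j})^{\rho}_{m}$ are already nonzero, the degree-$m$ piece of the localized presentation coincides with the generic one, so $E^{\rho}_{m}\cong\bE$ and your two-term maximum collapses to $-\min_{i}\{m^{i}_{\rho}\}$, which finishes the proof. (The paper phrases this comparison as $\max_{i}\{m^{i}_{\rho}\}\leq\max_{j}\{\mu^{j}_{\rho}\}$, but what is actually needed and used is the min-version above.) Your trailing remark ``$\mu^{j}_{\rho}\geq$ is not automatic'' shows you saw the relevant direction but did not resolve it; with the paper's sign conventions (cf.\ the presentation of the tangent module on $\cH_{3}$) it is automatic precisely because the entries are monomials of componentwise nonnegative degree $\umu^{j}-\um^{i}$.
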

\begin{proof}
Let us consider a character $m$ and a ray $\rho\in\Sigma(1)$. If $\langle m, n(\rho)\rangle\leq \min_{j}\{-\mu^{j}_{\rho}\}-1$ then  $R(\umu^{j})^{\rho}_{m}=0$ for any $1\leq i \leq b_{1}$, and $E^{\rho}_{m}=0$. This implies that $-\max_{j}\{\mu^{j}_{\rho}\}\leq i^{\rho}_{1}$. On the other hand, if $\langle m, n(\rho)\rangle \geq \max_{i,j}\{-m^{i}_{\rho},-\mu^{j}_{\rho}\}$ then $R(\um^{i})^{\rho}_{m},\;R(\umu^{j})^{\rho}_{m}\neq0$, and  $E^{\rho}_{m}\cong \bE$. Since $\phi$ is a multiplication map by a matrix of monomials, we have that $\max_{i}\{m^{i}_{\rho}\}\leq \max_{j}\{\mu^{j}_{\rho}\}$. Therefore $\max_{i,j}\{-m^{i}_{\rho},-\mu^{j}_{\rho}\}=\max_{i}\{-m^{i}_{\rho}\}=-\min_{i}\{m^{i}_{\rho}\}$ and in particular, $i^{\rho}_{\l}\leq -\min_{i}\{m^{i}_{\rho}\}$.
\end{proof}

\begin{example}
Let $\Sigma$ be the fan of the Hirzebruch surface $\cH_{3}$ and $R=\CC[x_{0},x_{1},y_{0},y_{1}]$ be its Cox ring (see Example \ref{Example:Hirzebruch}). Then, any character $m=(d_{1},d_{2})$ is embedded in $\ZZ^{4}$ as a multidegree $m=(-d_{1}+3d_{2},d_{1},-d_{2},d_{2})$. Let $E$ be a rank $2$ reflexive module presented as:
\begin{equation}\label{Eq:Reflexive example}
R\xrightarrow{\phi}
R(0,0,1,1)
\oplus
R(0,1,0,0)
\oplus
R(1,0,0,0)
\rightarrow
E\rightarrow 0\;\text{with}\;
\phi=
\left(
\begin{array}{@{}c@{}}
3y_{0}y_{1}\\
x_{1}\\
-x_{0}
\end{array}
\right).
\end{equation}
Notice that $\widetilde{E}=\cT_{\cH_{3}}$ is the tangent sheaf on $\cH_{3}$. We want to determine $\Sigma-$family associated to $E$. First of all, we localize the multigraded sequence (\ref{Eq:Reflexive example}) at $x^{\widehat{\{0\}}}=x_{0}x_{1}y_{0}y_{1}$ and we set $R^{\{0\}}:=R_{x^{\widehat{\{0\}}}}$. Any character $m=(d_{1},d_{2})$ yields an exact sequence of vector spaces
\[
R^{\{0\}}_{m}\xrightarrow{\phi}
R^{\{0\}}(0,0,1,1)_{m}
\oplus
R^{\{0\}}(0,1,0,0)_{m}
\oplus
R^{\{0\}}(1,0,0,0)_{m}
\rightarrow
E^{\{0\}}_{m}\rightarrow 0
\]
thus we obtain $E^{\{0\}}$ as a cokernel
\[
E^{\{0\}}_{m}
\hspace{-1mm}=
\hspace{-1mm}\frac{\CC
\langle 
x_{0}^{3d_{2}-d_{1}}x_{1}^{d_{1}}y_{0}^{-d_{2}+1}y_{1}^{d_{2}+1}\hspace{-1mm},
x_{0}^{3d_{2}-d_{1}}x_{1}^{d_{1}+1}y_{0}^{-d_{2}}y_{1}^{d_{2}},
x_{0}^{3d_{2}-d_{1}+1}x_{1}^{d_{1}}y_{0}^{-d_{2}}y_{1}^{d_{2}}
\rangle
}{
\langle 
3x_{0}^{3d_{2}-d_{1}}x_{1}^{d_{1}}y_{0}^{-d_{2}+1}y_{1}^{d_{2}+1}
\hspace{-2mm}+
\hspace{-1mm}
x_{0}^{3d_{2}-d_{1}}x_{1}^{d_{1}+1}y_{0}^{-d_{2}}y_{1}^{d_{2}}
\hspace{-2mm}-
\hspace{-1mm}
x_{0}^{3d_{2}-d_{1}+1}x_{1}^{d_{1}}y_{0}^{-d_{2}}y_{1}^{d_{2}}
\rangle}
\]
together with isomorphisms
\[
\phi_{m}:E^{\{0\}}_{m}\rightarrow \frac{\CC
\langle u_{1},u_{2},u_{3}\rangle}
{
\langle
3u_{1}+u_{2}-u_{3}
\rangle
}
=\CC\langle \overline{u_{1}}, \overline{u_{2}}\rangle =:\bE\quad\text{and}\quad\overline{u_{3}}=3\overline{u_{1}}+\overline{u_{2}}.
\]
Since $E$ is a reflexive module, we only need to compute the pieces $E^{\tau}_{m}$ for each ray $\tau\in\Sigma(1)$ and $m=(d_{1},d_{2})$. Let us consider the ray $\rho_{0}\in\Sigma(1)$, and the localization at $x^{\hat{\rho_{0}}}$. We set $R^{\rho_{0}}:=R_{x^{\hat{\rho_{0}}}}$, and for any character $m=(d_{1},d_{2})$ we have
\[
R^{\rho_{0}}_{m}=
\left\{
\begin{array}{ll}
\CC
\langle
x_{0}^{3d_{2}-d_{1}}x_{1}^{d_{1}}y_{0}^{-d_{2}}y_{1}^{d_{2}}
\rangle,&
\text{if}\quad 3d_{2}-d_{1}\geq0\\
0,&\text{otherwise.}
\end{array}
\right.
\]
Therefore, localizing the exact sequence (\ref{Eq:Reflexive example}) we obtain
\[
E^{\rho_{0}}_{m}\cong
\left\{
\begin{array}{ll}
\bE,&\text{if}\quad 3d_{2}-d_{1}\geq0\\
\langle
\overline{u_{3}}
\rangle=
\langle
3\overline{u_{1}}+\overline{u_{2}}
\rangle,&
\text{if}\quad 3d_{2}-d_{1}=-1\\
0,&\text{if}\quad 3d_{2}-d_{1}\leq -2.
\end{array}
\right.
\]
Similarly, localizing with respect to the other rays, we obtain
\[
E^{\rho_{1}}_{m}\cong
\left\{
\begin{array}{ll}
\bE,&\text{if}\quad d_{1}\geq0\\
\langle
\overline{u_{2}}
\rangle,&
\text{if}\quad d_{1}=-1\\
0,&\text{if}\quad d_{1}\leq -2
\end{array}
\right.\quad
E^{\eta_{0}}_{m}\cong
\left\{
\begin{array}{ll}
\bE,&\text{if}\quad -d_{2}\geq0\\
\langle
\overline{u_{1}}
\rangle,&
\text{if}\quad -d_{2}=-1\\
0,&\text{if}\quad -d_{2}\leq -2
\end{array}
\right.
\]
\[
E^{\eta_{0}}_{m}\cong
\left\{
\begin{array}{ll}
\bE,&\text{if}\quad d_{2}\geq0\\
\langle
\overline{u_{1}}
\rangle,&
\text{if}\quad d_{2}=-1\\
0,&\text{if}\quad d_{2}\leq -2.
\end{array}
\right.
\]
Therefore, the filtrations of subspaces of $\bE\cong\CC^{2}$ associated to $E$ are
\[
\begin{array}{cc}
E^{\rho_{0}}(-1,0;\langle 3\overline{u_{1}}+\overline{u_{2}}\rangle,\bE)&
E^{\rho_{1}}(-1,0;\langle \overline{u_{2}} \rangle,\bE)\\[2mm]
E^{\eta_{0}}(-1,0;\langle \overline{u_{1}} \rangle,\bE)&
E^{\eta_{1}}(-1,0;\langle \overline{u_{1}} \rangle,\bE).
\end{array}
\]
\end{example}


\section{Multigraded regularity of reflexive multigraded modules}
\label{Section:Multigraded regularity}
In this section we focus on $(\Cl(X),\ZZ^{r})-$graded reflexive $R-$modules. We recall, in the multigraded setting, the notions of support, Castelnuovo-Mumford regularity, Hilbert function, Hilbert polynomial and regularity index. Using the $\Sigma-$family of $E$, we introduce a collection of polytopes to describe the multigraded Hilbert function of $\Gamma E$. 

 Let $E$ be a finitely generated $\Cl(X)-$graded module. We denote by $\supp(E):=\{\alpha\in\Cl(X)\;|\;E_{\alpha}\neq0\}$ the {\em multigraded support} of $E$. We define the multigraded Hilbert function to be $h_{E}:\ZZ^{c}\rightarrow \ZZ$ such that $h_{E}(\alpha)=\dim E_{\alpha}$. We fix $\cA=\{g_{1},\dotsc,g_{r}\}\subset\Cl(X)$ such that $g_{i}=\deg(x_{i})$ for $1\leq i\leq r$. For any cone $\sigma$, we set $\cA_{\hat{\sigma}}=\{g_{i}\;|\;\rho_{i}\notin\sigma\}$, and we define the subsemigroup $\cK=\bigcap_{\sigma\in\Sigma}\NN \cA_{\hat{\sigma}}\subset\NN\cA$, which is saturated because $X$ is smooth. Notice that if $\Sigma_{\max}=\{\sigma_{1},\dotsc,\sigma_{s}\}$, then $\cK=\NN\cA_{\hat{\sigma_{1}}}\cap\dotsb\cap\NN\cA_{\hat{\sigma_{s}}}$. We have the following result:

\begin{proposition}\label{Proposition:Hilbert regularity}
Let $E$ be a finitely generated $\Cl(X)-$graded $R-$module
\begin{itemize}
\item[(i)] If $\alpha\in\cK$, then $H^{i}_{B}(R)_{\alpha}=0$ for all $i\geq0$.
\item[(ii)]Assume that $E$ has a $\Cl(X)-$graded minimal free resolution
\[
0\rightarrow \bigoplus_{i}R(-\alpha_{i,t})\rightarrow
\dotsb\rightarrow \bigoplus_{i}R(-\alpha_{i,1})\rightarrow \bigoplus_{i}R(-\alpha_{i,0})\rightarrow 
E\rightarrow 0.
\]
If $\displaystyle{\beta\in\bigcap_{i,j}(\alpha_{i,j}+\cK)}$, then $H^{i}_{B}(E)_{\beta}=0$ for $i\geq0$.
\item[(iii)] There is a polynomial $P_{E}$ in $k$ variables such that $h_{E}(\alpha)-P_{E}(\alpha)=\sum_{i=0}^{d}(-1)^{i}\dim H^{i}_{B}(E)_{\alpha}$.
\end{itemize}
\end{proposition}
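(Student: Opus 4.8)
The plan is to prove (i) by reducing the vanishing of local cohomology to a sheaf‑cohomology vanishing on $X$ and then producing enough global sections, to deduce (ii) from (i) by dimension shifting along the resolution, and to get (iii) as a formal consequence of the exact sequences of Proposition~\ref{Proposition:quasi-coherent sheaves} together with asymptotic Riemann--Roch. For (i): $R$ is a domain, hence $B$-torsion free, so $H^0_B(R)=0$; by Proposition~\ref{Proposition:line bundles} the natural map $R_\alpha\to H^0(X,\mathcal{O}_X(\alpha))$ occurring in Proposition~\ref{Proposition:quasi-coherent sheaves}(iv) is an isomorphism for every $\alpha$, so $H^1_B(R)=0$; and by Proposition~\ref{Proposition:quasi-coherent sheaves}(v), $H^i_B(R)_\alpha\cong H^{i-1}(X,\mathcal{O}_X(\alpha))$ for $i\ge 2$. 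Thus it suffices to show $\alpha\in\cK$ forces $H^j(X,\mathcal{O}_X(\alpha))=0$ for $j\ge 1$. I would prove this by global generation: for each maximal cone $\sigma$, the membership $\alpha\in\NN\cA_{\hat\sigma}$ provides nonnegative integers $c_i$ with $\big[\sum_{\rho_i\notin\sigma}c_iD_{\rho_i}\big]=\alpha$, and the canonical section of $\mathcal{O}_X\big(\sum_{\rho_i\notin\sigma}c_iD_{\rho_i}\big)\cong\mathcal{O}_X(\alpha)$ vanishes only along $\bigcup_{\rho_i\notin\sigma}D_{\rho_i}$, hence is nowhere zero on $U_\sigma=X\setminus\bigcup_{\rho_i\notin\sigma}D_{\rho_i}$. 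Since $\{U_\sigma\}_{\sigma\in\Sigma_{\max}}$ covers $X$, these sections have empty common zero locus, so $\mathcal{O}_X(\alpha)$ is globally generated, and Demazure vanishing for globally generated line bundles on a complete toric variety (\cite{CLS}) gives $H^j(X,\mathcal{O}_X(\alpha))=0$ for $j\ge 1$, proving (i).

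For (ii) I would use that $H^\bullet_B(-)$ is a graded $\delta$-functor commuting with finite direct sums, with $H^k_B(R(-\gamma))_\beta=H^k_B(R)_{\beta-\gamma}$. Breaking the minimal free resolution into short exact sequences $0\to Z_j\to F_j\to Z_{j-1}\to 0$ with $F_j=\bigoplus_i R(-\alpha_{i,j})$, $Z_{-1}=E$ and $Z_{t-1}\cong F_t$, the hypothesis $\beta\in\bigcap_{i,j}(\alpha_{i,j}+\cK)$ says $\beta-\alpha_{i,j}\in\cK$ for all $i,j$, so part (i) yields $H^k_B(F_j)_\beta=0$ for all $k$ and all $j$. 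Chasing the long exact sequences (dimension shifting) then gives $H^i_B(E)_\beta\cong H^{i+1}_B(Z_0)_\beta\cong\dots\cong H^{i+t}_B(F_t)_\beta=0$ for every $i\ge 0$, which is (ii).

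For (iii), combining the four‑term sequence of Proposition~\ref{Proposition:quasi-coherent sheaves}(iv) with the identifications of Proposition~\ref{Proposition:quasi-coherent sheaves}(v) and additivity of $\dim(-)_\alpha$ gives, in every degree $\alpha$,
\[
\sum_{i\ge 0}(-1)^i\dim H^i_B(E)_\alpha \;=\; \dim E_\alpha-\sum_{j\ge 0}(-1)^j\dim H^j(X,\widetilde{E}(\alpha))\;=\;h_E(\alpha)-\chi(\widetilde{E}(\alpha)),
\]
the left‑hand sum being finite because $H^i_B(E)=0$ for $i\gg 0$ (Grothendieck vanishing; indeed $H^i_B(E)_\alpha\cong H^{i-1}(X,\widetilde{E}(\alpha))=0$ once $i-1>\dim X$). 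It then remains to see $P_E(\alpha):=\chi(\widetilde{E}(\alpha))$ is polynomial in the coordinates of $\alpha$: since $X$ is smooth and complete, $\Cl(X)$ is free of rank $c$ (this follows from the exact sequence~(\ref{Eq:Class group exact sequence}) and smoothness), and for a basis $L_1,\dots,L_c$ of $\Cl(X)$ Snapper's theorem shows $(a_1,\dots,a_c)\mapsto\chi\big(\widetilde{E}\otimes L_1^{\otimes a_1}\otimes\dots\otimes L_c^{\otimes a_c}\big)$ is a numerical polynomial of total degree at most $\dim(\supp\widetilde{E})$; choosing $d$ so that $H^i_B(E)=0$ for $i>d$ finishes the proof.

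The hard part is entirely in (i): recognizing that the semigroup membership $\alpha\in\NN\cA_{\hat\sigma}$ is precisely what produces a global section of $\mathcal{O}_X(\alpha)$ trivializing it over $U_\sigma$, which forces $\mathcal{O}_X(\alpha)$ to be globally generated and hence cohomologically trivial in positive degrees. Once this is in place, (ii) is routine dimension shifting and (iii) is the standard identification of $\chi$ with a Hilbert polynomial via asymptotic Riemann--Roch.
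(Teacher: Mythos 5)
Your proof is correct, but it takes a different route from the paper: the paper does not argue these statements itself, it simply cites \cite{Mac-Smi1} (Corollary 3.6) for (i)--(ii) and \cite{Mac-Smi2} (Propositions 2.8 and 2.14) for (iii), whereas you reconstruct the content of those references from scratch. Your key step for (i) --- that $\alpha\in\cK=\bigcap_{\sigma\in\Sigma_{\max}}\NN\cA_{\hat\sigma}$ yields, for each maximal cone $\sigma$, an effective torus-invariant divisor in the class $\alpha$ supported off $U_{\sigma}$, hence a global section of $\mathcal{O}_X(\alpha)$ trivializing it on $U_\sigma$, so that $\mathcal{O}_X(\alpha)$ is globally generated and Demazure vanishing applies --- is sound (smoothness of $X$, which is a standing hypothesis, is what lets you treat every class as a line bundle), and combined with $B$-torsion-freeness of $R$, the saturatedness $R_\alpha\cong H^0(X,\mathcal{O}_X(\alpha))$, and Proposition \ref{Proposition:quasi-coherent sheaves}(v), it covers all $i\geq 0$. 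Part (ii) by degree-wise dimension shifting along the graded resolution and part (iii) by combining the four-term sequence with (v) to get $h_E(\alpha)-\chi(\widetilde{E}(\alpha))=\sum_i(-1)^i\dim H^i_B(E)_\alpha$, and then Snapper's theorem (using that $\Cl(X)$ is free of rank $c$, so ``$k$ variables'' in the statement should be read as $c$) for polynomiality, are both standard and correctly executed; the finiteness of the sum via $H^i_B(E)_\alpha\cong H^{i-1}(X,\widetilde E(\alpha))=0$ for $i-1>\dim X$ is also fine. What your version buys is a self-contained, geometric proof inside the toric framework of the paper; what the paper's citation buys is brevity and the extra generality of Maclagan--Smith's setting, but mathematically your argument is essentially a faithful reconstruction of theirs.
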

\begin{proof}
(i) and (ii) follow from \cite[Corollary 3.6]{Mac-Smi1}. For (iii), see \cite[Proposition 2.8 and Proposition 2.14]{Mac-Smi2}.
\end{proof}
The polynomial $P_{E}$ is called the {\em multigraded Hilbert polynomial} of $E$. In particular, by Proposition \ref{Proposition:Hilbert regularity} (ii) and the arithmetic Nullstellensatz it follows that $P_{E} = P_{\Gamma E}$. We define the {\em multigraded regularity index} of the Hilbert function of $E$ to be $\ri(E)=\{\alpha\in\Cl(X)\;|\;h_{E}(\alpha)=P_{E}(\alpha)\}$. If $\Cl(X)\cong\ZZ$, we abuse notation and write $\ri(E):=\min\{i\in \ZZ\;|\;h_{E}(i)=P_{E}(i)\}$. Hence, the definition coincides with the notion of regularity index found in \cite{Tru} or \cite{Hoa-Hyry} for the standard $\ZZ-$graded polynomial ring. 
Following \cite{Mac-Smi1} we recall the notion of Castelnuovo-Mumford multigraded regularity. We denote by $\cC=\{c_{1},\dotsc,c_{s}\}\subset\cK$ the unique minimal generating subset such that $\cK=\NN\cC$. It is called a {\em Hilbert basis} (see \cite[\textsc{iv}.16.4]{Sch}). For any integer $i\geq 0$ we set $\NN\cC[i]=\bigcup(-\lambda_{1}c_{1}-\dotsb-\lambda_{s}c_{s}+\cK)$ where the union runs over all $\lambda_{1},\dotsc,\lambda_{s}\in\NN$ such that $\lambda_{1}+\dotsb+\lambda_{s}=i$.
\begin{definition}
For $\alpha\in\Cl(X)$, the module $E$ is {\em $\alpha-$regular} if
\begin{itemize}
\item[(i)] $\underline{H^{i}_{B}(E)_{\beta}=0}$ for any $i\geq1$ and all $\underline{\beta\in \alpha+\NN\cC[i-1]}$, and
\item[(ii)] $\underline{H^{0}_{B}(E)_{\beta}=0}$ for all $\beta\in\bigcup_{i=1}^{s}(\alpha+c_{i}+\cK)$.
\end{itemize}
The set $\reg(E)=\{\alpha\in\Cl(X)\;|\;E\;\text{is}\;\alpha-\text{regular}\}$ is called the {\em multigraded Castelnuovo-Mumford regularity} of $E$.
\end{definition}

Let $\cA\subset \ZZ^{c}$ be a set. We say that $B\subset \ZZ^{c}$ is a {\em lower bound} (respectively {\em upper bound}) of $\cA$ if $\cA\subset B$ (respectively $B\subset \cA$). For instance, from Proposition \ref{Proposition:Hilbert regularity} (ii) and (iii) the set $\bigcap_{i,j}(\alpha_{i,j}+\cK)$ is a lower bound of $\ri(E)$. On the other hand, we define $\reg(E)^{\circ}=\bigcup_{\alpha\in\reg(E)}((\alpha+\cK)\setminus\{\alpha\})\subset \reg(E)$. The following result relates the multigraded Castelnuovo-Mumford regularity with the multigraded regularity index:
\begin{proposition}
If $E$ is $\alpha-$regular, then $(\alpha+\cK)\setminus\{\alpha\}\subset \ri(E)$. In particular $\reg(E)^{\circ}\subset \ri(E)$, and thus $\ri(E)$ is a lower bound of $\reg(E)^{\circ}$.
\end{proposition}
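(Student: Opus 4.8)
The plan is to deduce everything from Proposition \ref{Proposition:Hilbert regularity}(iii): for any $\beta\in\Cl(X)$ we have $h_{E}(\beta)-P_{E}(\beta)=\sum_{i\geq 0}(-1)^{i}\dim H^{i}_{B}(E)_{\beta}$, so $\beta\in\ri(E)$ as soon as $H^{i}_{B}(E)_{\beta}=0$ for every $i\geq 0$. Thus it suffices to prove that if $E$ is $\alpha$-regular and $\beta\in(\alpha+\cK)\setminus\{\alpha\}$, then $H^{i}_{B}(E)_{\beta}=0$ for all $i\geq 0$. Granting this, the ``in particular'' part is formal: taking the union over $\alpha\in\reg(E)$ in the definition $\reg(E)^{\circ}=\bigcup_{\alpha\in\reg(E)}((\alpha+\cK)\setminus\{\alpha\})$ gives $\reg(E)^{\circ}\subseteq\ri(E)$, which by the definition of lower bound says exactly that $\ri(E)$ is a lower bound of $\reg(E)^{\circ}$.

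First I would fix $\beta=\alpha+\kappa$ with $\kappa\in\cK=\NN\cC$ and $\kappa\neq 0$, and write $\kappa=\lambda_{1}c_{1}+\dotsb+\lambda_{s}c_{s}$ with $\lambda_{j}\in\NN$ not all zero. For the higher local cohomology ($i\geq1$), the key point is the inclusion $\cK\subseteq\NN\cC[i-1]$: for any $\mu_{1},\dotsc,\mu_{s}\in\NN$ with $\mu_{1}+\dotsb+\mu_{s}=i-1$, the element $\mu_{1}c_{1}+\dotsb+\mu_{s}c_{s}$ lies in $\cK$ (being a sum of elements of $\cC\subseteq\cK$), hence $-\mu_{1}c_{1}-\dotsb-\mu_{s}c_{s}+\cK\supseteq\cK$, and taking the union over such tuples gives $\NN\cC[i-1]\supseteq\cK$. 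Therefore $\kappa\in\cK\subseteq\NN\cC[i-1]$, so $\beta\in\alpha+\NN\cC[i-1]$, and condition (i) in the definition of $\alpha$-regularity yields $H^{i}_{B}(E)_{\beta}=0$ for all $i\geq 1$.

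Then, for $H^{0}_{B}(E)_{\beta}$, I would use the hypothesis $\kappa\neq 0$: pick an index $j_{0}$ with $\lambda_{j_{0}}\geq1$ and write $\kappa=c_{j_{0}}+\kappa'$ where $\kappa'=(\lambda_{j_{0}}-1)c_{j_{0}}+\sum_{j\neq j_{0}}\lambda_{j}c_{j}\in\cK$. Then $\beta=\alpha+c_{j_{0}}+\kappa'\in\bigcup_{i=1}^{s}(\alpha+c_{i}+\cK)$, so condition (ii) of $\alpha$-regularity gives $H^{0}_{B}(E)_{\beta}=0$. Combining the two cases, $H^{i}_{B}(E)_{\beta}=0$ for all $i\geq 0$, hence $\beta\in\ri(E)$; this proves $(\alpha+\cK)\setminus\{\alpha\}\subseteq\ri(E)$, and the remaining assertions follow as indicated above.

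I do not expect a real obstacle here: the argument is purely bookkeeping with the semigroup $\cK$ and its Hilbert basis $\cC$, together with Proposition \ref{Proposition:Hilbert regularity}(iii). The only point that requires some care is getting the inclusion $\cK\subseteq\NN\cC[j]$ in the correct direction, and the (genuinely necessary) use of $\kappa\neq0$ to place $\beta$ in $\alpha+c_{i}+\cK$ for the degree-zero local cohomology --- indeed $\alpha$ itself need not lie in $\ri(E)$, so excluding it from $(\alpha+\cK)$ cannot be avoided.
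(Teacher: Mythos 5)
Your argument is correct, and it checks out at every step: the inclusion $\cK\subseteq\NN\cC[i-1]$ holds for all $i\geq 1$ exactly as you say (any tuple $\mu_{1}+\dotsb+\mu_{s}=i-1$ gives $\cK\subseteq-\mu_{1}c_{1}-\dotsb-\mu_{s}c_{s}+\cK$ because $\cK$ is a semigroup containing $\cC$), the splitting $\kappa=c_{j_{0}}+\kappa'$ with $\kappa'\in\cK$ legitimately uses $\kappa\neq0$ together with $\cK=\NN\cC$, and Proposition \ref{Proposition:Hilbert regularity}(iii) then forces $h_{E}(\beta)=P_{E}(\beta)$ once all $H^{i}_{B}(E)_{\beta}$ vanish; the ``in particular'' statements are indeed formal given the paper's convention that a lower bound of a set is a superset of it. The difference with the paper is that the paper does not argue at all: it simply cites \cite[Corollary 2.15]{Mac-Smi2}, whose proof is in essence the same bookkeeping with the Hilbert basis $\cC$ and the vanishing of local cohomology that you carry out. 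So your write-up amounts to an inlined, self-contained proof of the cited result in the notation of this paper, which costs half a page but makes the section independent of \cite{Mac-Smi2}; the citation, by contrast, buys brevity and defers the semigroup manipulations to the original source. Your closing observation that $\alpha$ itself cannot be included (only $H^{i}_{B}$ with $i\geq1$ is controlled at $\alpha$, not $H^{0}_{B}$) is exactly the reason the statement is about $(\alpha+\cK)\setminus\{\alpha\}$.
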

\begin{proof}
See \cite[Corollary 2.15]{Mac-Smi2}.
\end{proof}

Next, we study these invariants in the case of reflexive $(\Cl(X),\ZZ^{r})-$graded $R-$modules, where $X$ is a complete smooth toric variety of dimension $n$, $|\Sigma|=\RR^{n}$ and $\Sigma_{\max}=\{\sigma_{1},\dotsc,\sigma_{s}\}$. We choose a basis $\{e_{1},\dotsc,e_{n}\}$ of $N=\ZZ^{n}$ such that $\sigma_{1}=\cone(e_{1},\dotsc,e_{n})$. We write $\Sigma(1)=\{\rho_{1},\dotsc,\rho_{n},\rho_{n+1},\dotsc,\rho_{r}\}$ the set of rays such that $\rho_{k}=\cone(e_{k})$ for $1\leq k\leq n$. The class group of $X$ is $\Cl(X)=\coker(\phi)\cong\ZZ^{r-n}$ with $\phi:M\rightarrow\ZZ^{r}$ given by the following matrix
\begin{equation}\label{Eq:Notation smooth toric class group}
\left(
\begin{array}{@{}ccc@{}}
1&\cdots&0\\
\vdots&\ddots&\vdots\\
0&\cdots&1\\
a_{1}^{1}&\cdots&a_{n}^{1}\\
\vdots&&\vdots\\
a_{1}^{r-n}&\cdots&a_{n}^{r-n}
\end{array}
\right)\;\text{where}\;
\rho_{k}=\cone(a_{1}^{k}e_{1}+\dotsb+a_{n}^{k}e_{n}).
\end{equation}
Then, $[D_{\rho_{k}}]=a_{k}^{1}[D_{\rho_{n+1}}]+\dotsb+a_{k}^{r-n}[D_{\rho_{r}}]$ for $1\leq k\leq n$. Hence $\Cl(X)=\ZZ\langle[D_{\rho_{n+1}}],\dotsc,[D_{\rho_{r}}]\rangle$. The Cox ring of $X$ is $R=\CC[x_{1},\dotsc,x_{r}]$ with $\deg(x_{k})=(a_{k}^{1},\dotsc,a_{k}^{r-n})$ for $1\leq k\leq n$ and $\deg(x_{n+k})=\alpha_{k}$ for $1\leq k\leq r-n$, where $\{\alpha_{1},\dotsc,\alpha_{r-n}\}$ is the standard basis of $\ZZ^{r-n}$. As in Subsection \ref{Section:Preliminaries Klyachko filtrations for modules}, we identify a character $m=(d_{1},\dotsc,d_{n})\in M$ with its image by $\phi$:
\[
\phi(m)=(d_{1},\dotsc,d_{n},a_{1}^{1}d_{1}+\dotsb+a_{n}^{1}d_{n},\dotsc,
a_{1}^{r-n}d_{1}+\dotsb+a_{n}^{r-n}d_{n}).
\]
Let $E$ be a $(\Cl(X),\ZZ^{r})-$graded reflexive module of rank $\l$ and $\{\hat{E}^{k}\}_{k=1}^{r}$ the associated $\Sigma-$family, where $\hat{E}^{k}=E^{k}(i_{1}^{k},\dotsc,$ $i_{\l}^{k};E_{1}^{k},\dotsc,E_{\l-1}^{k},\CC^{\l})$. For any $\um\in \{1,\dotsc,\l\}^{r}$, and $\ut=(t_{1},\dotsc,t_{r-n})\in\Cl(X)$ we consider the $n-$dimensional polytope $\Omega_{\um}(\ut)$, defined by linear system of inequalities
\begin{equation}\label{Eq:Polytop system reflexive}
\left(
\begin{array}{@{}c@{}}
i_{m_{1}}^{1}\\
\vdots\\
i_{m_{n}}^{n}\\
i_{m_{n+1}}^{n+1}-t_{1}\\
\vdots\\
i_{m_{r}}^{r}-t_{r}
\end{array}
\right)
\leq\left(
\begin{array}{@{}ccc@{}}
1&\cdots&0\\
\vdots&\ddots&\vdots\\
0&\cdots&1\\
a_{1}^{1}&\cdots&a_{n}^{1}\\
\vdots&&\vdots\\
a_{1}^{r-n}&\cdots&a_{n}^{r-n}
\end{array}
\right)\cdot m\leq
\left(
\begin{array}{@{}c@{}}
i_{m_{1}+1}^{1}\\
\vdots\\
i_{m_{n}+1}^{n}\\
i_{m_{n+1}+1}^{n+1}-t_{1}\\
\vdots\\
i_{m_{r}+1}^{r}-t_{r}
\end{array}
\right)
\end{equation}
where we set $i^{k}_{\l+1}:=\infty$. We define $\Psi_{\um}(\ut)=\Omega_{\um}(\ut)\cap\ZZ^{n}$, the integer solutions of (\ref{Eq:Polytop system reflexive}). We have the following result.
\begin{proposition}\label{Proposition:Hilbert from polytope}
Let $\ut\in\Cl(X)$.
\begin{itemize}
\item[(i)] $\um\in\{1,\dotsc,\l\}^{r}$ and $m\in\Psi_{\um}(\ut)$ if and only if 
\[
[\Gamma E(\ut)]_{m}=\bigcap_{k=1}^{r}E^{k}_{m_{k}}.
\]
\item[(ii)] If $m\in M$ is a character such that $[\Gamma E(\ut)]_{m}\neq0$, then there is $\um\in\{1,\dotsc,\l\}^{r}$ such that $m\in\Psi_{\um}(\ut)$.
\item[(iii)] The Hilbert function of $\Gamma E$ is
\[
h_{\Gamma E}(\ut)=\sum_{\um\in\{1,\dotsc,\l\}^{r}}|\Psi_{\um}(\ut)|D(\um)
\]
where for $\um=(m_{1},\dotsc,m_{r})$, $D(\um)=\dim\bigcap_{k=1}^{r}
E^{k}_{m_{k}}$.
\end{itemize}
\end{proposition}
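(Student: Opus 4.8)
The plan is to unwind the definitions and use the description of $\Gamma E = H^0_*(X,\widetilde E)$ in terms of the Klyachko filtrations, which is available since $E$ is reflexive. Recall from Proposition~\ref{Proposition:HH0} that for any character $m\in M$ one has
\[
[\Gamma E]_{m} = H^0(X,\widetilde E)_{m} \cong \bigcap_{\rho\in\Sigma(1)} E^{\rho}\bigl(\langle m, n(\rho)\rangle\bigr),
\]
and by Proposition~\ref{Proposition:twist} the module $\Gamma E(\ut)$ has filtrations $E^{\rho}(i^{\rho}_{1}-a_{\rho},\dots,i^{\rho}_{\l}-a_{\rho};E^{\rho}_{1},\dots,E^{\rho}_{\l})$ where $\sum_\rho a_\rho D_\rho$ is a Weil divisor of class $\ut$. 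With the explicit choice of divisor coming from the matrix \eqref{Eq:Notation smooth toric class group} — namely $a_{\rho_k}=0$ for $1\le k\le n$ and $a_{\rho_{n+k}}=t_k$ for $1\le k\le r-n$ — the pairing $\langle m, n(\rho_k)\rangle$ is exactly the $k$-th coordinate of the vector $\phi(m)$ appearing in \eqref{Eq:Polytop system reflexive}, shifted by the appropriate $t_j$. So the whole statement is really a bookkeeping translation between "which step of each filtration the character $m$ lands in'' and "which inequalities of the system \eqref{Eq:Polytop system reflexive} it satisfies.''

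For part (i): by Notation~\ref{Notation:Reflexives}, saying $E^{\rho_k}\bigl(\langle m,n(\rho_k)\rangle - a_{\rho_k}\bigr) = E^{k}_{m_k}$ for a given $m_k\in\{1,\dots,\l\}$ is equivalent, by definition of the filtration, to the double inequality $i^{k}_{m_k} \le \langle m,n(\rho_k)\rangle + (\text{shift}) \le i^{k}_{m_k+1}-1$ (using the convention $i^{k}_{\l+1}=\infty$ and the monotonicity $i^k_j=i^k_{j+1}\Leftrightarrow E^k_j=E^k_{j+1}$ to handle equalities cleanly). Stacking these $r$ double inequalities for $k=1,\dots,r$, after substituting $\langle m,n(\rho_k)\rangle=(\,k\text{-th row of the matrix}\,)\cdot m$ and the shifts $a_{\rho_k}$ as above, is literally the system \eqref{Eq:Polytop system reflexive} defining $\Omega_{\um}(\ut)$; requiring $m\in\ZZ^n$ gives $m\in\Psi_{\um}(\ut)$. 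Conversely, $m\in\Psi_{\um}(\ut)$ forces each $E^{\rho_k}$ evaluated at the shifted pairing to equal $E^k_{m_k}$, hence $[\Gamma E(\ut)]_m=\bigcap_k E^{k}_{m_k}$ by the intersection formula. One small point to check is that the formula of Proposition~\ref{Proposition:HH0} is stated for $\Gamma E$ itself; for the twist $\Gamma E(\ut)$ one either re-derives it from $\widetilde{E(\ut)}=\widetilde E(\ut)$ (which is again reflexive) together with Proposition~\ref{Proposition:twist}, or notes $H^0(X,\widetilde E(\ut))_m = H^0(X,\widetilde E)_{m+\phi(\ut)}$ where convenient.

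Part (ii) is immediate from part (i): if $[\Gamma E(\ut)]_m\neq 0$, then in particular each $E^{\rho_k}$ of the twisted module evaluated at $\langle m,n(\rho_k)\rangle$ is some nonzero subspace of the filtration, hence equals $E^{k}_{m_k}$ for a well-defined index $m_k\in\{1,\dots,\l\}$ (the smallest index at which that step is reached); set $\um=(m_1,\dots,m_r)$ and apply (i) to get $m\in\Psi_{\um}(\ut)$. For part (iii), note that the sets $\Psi_{\um}(\ut)$, as $\um$ ranges over $\{1,\dots,\l\}^{r}$, are pairwise disjoint (the index $m_k$ at which $E^{\rho_k}\bigl(\langle m,n(\rho_k)\rangle - a_{\rho_k}\bigr)$ stabilizes to a given subspace in the normalized sense of Notation~\ref{Notation:Reflexives} is uniquely determined by $m$) and, by (ii), every character $m$ with $[\Gamma E(\ut)]_m\neq 0$ lies in exactly one of them. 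Therefore
\[
h_{\Gamma E}(\ut) = \sum_{m\in M} \dim [\Gamma E(\ut)]_m = \sum_{\um\in\{1,\dots,\l\}^{r}}\ \sum_{m\in\Psi_{\um}(\ut)} \dim\bigcap_{k=1}^{r} E^{k}_{m_k} = \sum_{\um\in\{1,\dots,\l\}^{r}} |\Psi_{\um}(\ut)|\, D(\um),
\]
using $h_{\Gamma E}(\ut)=\sum_m \dim[\Gamma E]_{m+\phi(\ut)}=\sum_m\dim[\Gamma E(\ut)]_m$ and that $\dim\bigcap_k E^k_{m_k}=D(\um)$ is constant on $\Psi_{\um}(\ut)$. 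The only genuinely delicate point in the whole argument is the correct handling of the "plateau'' ambiguity in Notation~\ref{Notation:Reflexives}: when $E^{k}_{m_k}=E^{k}_{m_k+1}$ the character $m$ could a priori be labelled by either index, and one must fix the convention consistently (say, always pick the minimal admissible $m_k$, which the strict condition $i^k_{m_k}\le \cdot \le i^k_{m_k+1}-1$ with $i^k_j=i^k_{j+1}\Leftrightarrow E^k_j=E^k_{j+1}$ already enforces) so that the $\Psi_{\um}(\ut)$ are truly disjoint and the count in (iii) is not over- or under-stated.
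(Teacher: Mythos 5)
Your proof is correct and follows essentially the same route as the paper: choose the explicit Weil divisor $t_{1}D_{\rho_{n+1}}+\dotsb+t_{r-n}D_{\rho_{r}}$ representing $\ut$, apply Propositions \ref{Proposition:twist} and \ref{Proposition:HH0} to identify $[\Gamma E(\ut)]_{m}$ with the intersection of twisted filtration steps, and translate membership in each step into the double inequalities of \eqref{Eq:Polytop system reflexive} via Notation \ref{Notation:Reflexives}. Your explicit treatment of the plateau/disjointness issue in (iii) (using the convention $i^{k}_{j}=i^{k}_{j+1}\Leftrightarrow E^{k}_{j}=E^{k}_{j+1}$ so the upper bound is effectively strict) is a point the paper leaves implicit, but it is the same argument.
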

\begin{proof}
Notice that $D=t_{1}D_{\rho_{n+1}}+\dotsb+t_{r-n}D_{\rho_{r}}$ is a Weil divisor with $[D]=\ut\in\Cl(X)$. Hence, from Propositions \ref{Proposition:line bundles} and \ref{Proposition:twist}, the filtration of $E(\ut)$ is
\vspace{-4mm}
\[
\left\{
\begin{array}{@{}ll}
E^{k}(i_{1}^{k},\dotsc,i_{\l}^{k};E^{k}_{1},\dotsc,\CC^{\l})&1\leq k\leq n\\
E^{k}(i_{1}^{k}-t_{k},\dotsc,i_{\l}^{k}-t_{k};E^{k}_{1},\dotsc,\CC^{\l})&n+1\leq k\leq r.
\end{array}\right.
\]
On the other hand, $m$ is a solution of (\ref{Eq:Polytop system reflexive}) if and only if $i_{m_{k}}^{k}\leq \langle m,n(\rho_{k})\rangle <i_{m_{k}+1}^{k}$ for $1\leq k\leq n$ and
$i_{m_{k}}^{n+k}-t_{k}\leq \langle m, n(\rho_{n+k})\rangle <
i_{m_{n+k}+1}^{n+k}-t_{k}$. So, the result follows by Proposition \ref{Proposition:HH0} and the interpretation of Notation \ref{Notation:Reflexives}.
\end{proof}

Proposition \ref{Proposition:Hilbert from polytope} provides bounds for the the multigraded support of $\Gamma E$ and it encodes a way to compute the multigraded Hilbert function. In particular, studying asymptotically $|\Psi_{\um}(\ut)|$ with respect to $\ut$ we can see when it behaves as a polynomial in $\ut$, and gather information on the multigraded regularity index $\ri(\Gamma E)$. 
However, it is worth noticing that the lattice polytopes $\Omega_{\um}(\ut)$ can be complicated, and their associated sets of integer points $\Psi_{\um}(\ut)$ can be difficult to control, in general. Nonetheless, in next section we see that for toric varieties with $1-$splitting fan we can find explicit bounds for the multigraded regularity index and a closed expression for the multigraded Hilbert polynomial. 


\section{Application to Cox rings with splitting fans}
\label{Section:Applications}
In this section, we apply Proposition \ref{Proposition:Hilbert from polytope} to reflexive modules over the ring $R=\CC[x_{0},\dotsc,x_{s},y_{0},\dotsc,y_{r}]$ $\ZZ^{2}-$graded by $\deg(x_{i})=(1,0)$ for $0\leq i \leq s$, $\deg(y_{0})=(0,1)$ and $\deg(y_{i})=(-a_{i},1)$ for $1\leq i\leq r$. 
When $a_{i}=0$ for $1\leq i\leq r$, $R$ is the classical bihomogeneous polynomial ring. 
In general, $R$ is the Cox ring of a toric variety with $1-$splitting fan. As we recall next, the fan of this toric variety is a {\em splitting fan}. More precisely, using the Batyrev notation we have the following definition.
\begin{definition}\label{Definition:Splitting}
Let $\Sigma$ be the fan of a $d-$dimensional smooth complete toric variety. A set of rays $\{\rho_{1},\dotsc,\rho_{k}\}$ is called a {\em primitive collection} if $\cone(n(\rho_{1}),\dotsc,n(\rho_{k}))\notin\Sigma$, but $\cone(n(\rho_{1}),\dotsc,\widehat{n(\rho_{j})},\dotsc,$ $n(\rho_{k}))\in\Sigma$. If $\{\rho_{1},\dotsc,\rho_{k}\}$ is a primitive collection, then there is a unique cone $\sigma=\cone(v_{1},\dotsc,v_{t})\in\Sigma$ and integers $a_{1},\dotsc,a_{t}>0$ such that $n(\rho_{1})+\dotsb+n(\rho_{k})=a_{1}v_{1}+\dotsb+a_{t}v_{t}$, which is called the associated {\em primitive relation}. We say that $\Sigma$ is a {\em splitting fan} if any two primitive collections have no common elements.
\end{definition} 
\begin{proposition}
Let $\Sigma$ be the fan of a $d-$dimensional smooth complete toric variety. The fan $\Sigma$ is a splitting fan if and only if there is a sequence of toric varieties $X=X_{k},\dotsc,X_{0}$ such that $X_{0}=\PP^{n}$ for some $n$ and for $1\leq i\leq k$, $X_{i}$ is a projectivization of a decomposable vector bundle over $X_{i-1}$. Moreover, we say then that the fan is $k-$splitting.

\end{proposition}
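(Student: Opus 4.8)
The plan is to prove this by induction on the number $m$ of primitive collections of $\Sigma$; the statement is essentially Batyrev's characterization of splitting fans. One preliminary remark is worth isolating: for a splitting fan, $m$ equals the Picard number $\rk\Cl(X)=|\Sigma(1)|-d$. Indeed the primitive collections are pairwise disjoint by hypothesis, and every ray lies in at least one of them --- if a ray $\rho$ were contained in every maximal cone, then by completeness some maximal cone would also contain $-n(\rho)$, contradicting strong convexity --- so they partition $\Sigma(1)$.

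For the sufficiency direction, the fan of $\PP^{n}$ has $\Sigma(1)$ as its only primitive collection and so is splitting; it therefore suffices to prove that if $X'$ has a splitting fan $\Sigma'$ in $N'_{\RR}$ and $X=\PP_{X'}(\cO_{X'}(D_{1})\oplus\dotsb\oplus\cO_{X'}(D_{p}))$ for line bundles $\cO_{X'}(D_{j})$, then the fan $\Sigma$ of $X$ is splitting, and then to iterate along the tower $X_{0}=\PP^{n},\dotsc,X_{k}=X$. I would record $\Sigma$ explicitly in $N_{\RR}=N'_{\RR}\oplus\RR^{p-1}$: its rays are a lift $\widetilde{\rho}$ of each ray $\rho\in\Sigma'(1)$, the standard basis vectors $f_{1},\dotsc,f_{p-1}$ of $\RR^{p-1}$, and one more ray $f_{0}$ of the form $-(f_{1}+\dotsb+f_{p-1})+v$ with $v\in N'$ determined by the $D_{j}$; its maximal cones are $\widetilde{\sigma}'+\cone(f_{j}\mid j\in J)$ with $\sigma'\in\Sigma'_{\max}$, $J\subsetneq\{0,\dotsc,p-1\}$, $|J|=p-1$. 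Reading primitive collections off this description, they are exactly the lifts of the primitive collections of $\Sigma'$, which involve only the rays $\widetilde{\rho}$, together with the single new collection $\{f_{0},f_{1},\dotsc,f_{p-1}\}$, which involves only the $f_{j}$; as these two families use disjoint sets of rays, $\Sigma$ is splitting.

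For the necessity direction --- the substance of the statement --- I would again induct on $m$. If $m=1$ then, by the preliminary remark, $|\Sigma(1)|=d+1$, so $\Sigma(1)$ is a single primitive collection with primitive relation $\sum_{\rho}n(\rho)=0$; hence $\Sigma$ is the face fan of a $d$-simplex and $X\cong\PP^{d}$. If $m\geq2$, I would pick a primitive collection $P=\{\rho_{1},\dotsc,\rho_{p}\}$ none of whose rays occurs on the right-hand side of the primitive relation of any other primitive collection (such a $P$ exists because the fan is splitting), and check, using this and pairwise disjointness, that every maximal cone of $\Sigma$ contains exactly $p-1$ of the $\rho_{i}$. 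I would then form $N'=N/(\ZZ n(\rho_{1})+\dotsb+\ZZ n(\rho_{p}))^{\mathrm{sat}}$, of rank $d-(p-1)$, with projection $\psi\colon N\to N'$, and show that $\{\psi(\sigma)\mid\sigma\in\Sigma\}$ is a smooth complete fan $\Sigma'$ of a toric variety $X'$ with $\rk\Cl(X')=m-1$, that the induced morphism $\pi\colon X\to X'$ is a $\PP^{p-1}$-bundle, and --- by reading off how $n(\rho_{1}),\dotsc,n(\rho_{p})$ sit over the lifts of the rays of $\Sigma'$ --- that $X\cong\PP_{X'}(\cO_{X'}(D_{1})\oplus\dotsb\oplus\cO_{X'}(D_{p}))$ for suitable divisors $D_{j}$ on $X'$. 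Since the $\psi$-images of the primitive collections of $\Sigma$ other than $P$ are precisely the primitive collections of $\Sigma'$, still pairwise disjoint, $\Sigma'$ is splitting; the inductive hypothesis yields a tower for $X'$ ending in a projective space, and prepending $\pi$ completes the induction, with the number of bundle steps equal to $m-1$ (the integer $k$ in the statement).

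The hard part is the necessity direction, and within it two points that are not formal: that the quotient fan $\Sigma'$ is again smooth and complete --- this is where smoothness of $\Sigma$ and the fact that each maximal cone meets $P$ in exactly $p-1$ rays are essential --- and that $\pi$ is the projectivization of an honest \emph{direct sum} of line bundles rather than of a general rank-$p$ bundle, i.e.\ that $\pi$ admits enough disjoint sections; it is precisely here that the splitting hypothesis, as opposed to the mere existence of a $\PP^{p-1}$-fibration, is needed. I expect either to carry these out directly from the combinatorics of splitting fans, or --- since together they constitute Batyrev's proposition --- to cite his classification of smooth projective toric varieties.
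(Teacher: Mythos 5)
The paper itself gives no argument here: it just cites Batyrev (\cite[Theorem 4.3 and Corollary 4.4]{Bat}), so any complete proof you write necessarily goes beyond the paper, and your sufficiency direction is fine in outline. The genuine problem is in the necessity induction: your criterion for selecting the primitive collection $P$ is backwards. You propose to take a $P$ none of whose rays occurs on the right-hand side of the primitive relation of any \emph{other} collection, and then to quotient $N$ by the saturation of $\ZZ n(\rho_{1})+\dotsb+\ZZ n(\rho_{p})$, asserting this has corank $d-(p-1)$. The Hirzebruch surface $\cH_{a}$ with $a>0$ (Example \ref{Example:Hirzebruch}) already defeats this: the two primitive collections are $\{u_{0},u_{1}\}$ with relation $u_{0}+u_{1}=a\,v_{1}$ and $\{v_{0},v_{1}\}$ with relation $v_{0}+v_{1}=0$. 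Since $v_{1}$ appears on the right-hand side of the first relation, your criterion excludes $\{v_{0},v_{1}\}$ and selects $\{u_{0},u_{1}\}$; but $u_{0}=-e+af$ and $u_{1}=e$ span all of $N_{\RR}$, so your $N'$ has rank $0$ rather than $d-(p-1)=1$, the images of the cones do not form the fan of a curve, and $\cH_{a}$ is certainly not a $\PP^{1}$-bundle over a point. The collection you must choose is the opposite one: a $P$ whose \emph{own} primitive relation is trivial, $\sum_{x\in P}n(x)=0$ (a sink, not a source, in the digraph with an edge $P\to Q$ whenever a ray of $Q$ appears in the relation of $P$); here that is $\{v_{0},v_{1}\}$, which gives $N'=N/\ZZ f$, the base $\PP^{1}$, and $\cH_{a}=\PP(\cO_{\PP^{1}}\oplus\cO_{\PP^{1}}(a))$.

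With the corrected choice you still owe several nontrivial facts: that a splitting fan actually possesses a primitive collection with trivial relation (e.g.\ by proving the digraph above is acyclic, using disjointness of the collections), the two points you yourself flag (smoothness and completeness of the quotient fan, and that the resulting $\PP^{p-1}$-fibration is the projectivization of a direct sum of line bundles), and, in your preliminary remark, the equality of the number of primitive collections with the Picard number, which needs more than the partition argument (one must also show each maximal cone meets each collection $P_{i}$ in exactly $|P_{i}|-1$ rays). These steps together are precisely the content of Batyrev's theorem; citing it, as the paper does, is perfectly legitimate, but as a self-contained sketch your argument fails at the selection step as written.
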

\begin{proof}
See \cite[Theorem 4.3 and Corollary 4.4]{Bat}.
\end{proof}

In particular, a toric variety has $1-$splitting fan if and only if $X$ is a $(r+s)-$dimensional toric variety $V_{s}(a_{1},\dotsc,a_{r})=\PP(\cO_{\PP^{s}}\oplus\cO_{\PP^{s}}(a_{1})\oplus\dotsb\oplus\cO_{\PP^{s}}(a_{r}))$. Let us introduce the fan $\Sigma\subset\RR^{r+s}$ of $V_{s}(a_{1},\dotsc,a_{r})$. We fix $\{e_{1},\dotsc,e_{s},f_{1},\dotsc,f_{r}\}$ the standard basis for $\ZZ^{s+r}$. We define 
\[
\begin{array}{l@{}ll@{}l}
\rho_{0}&:=\cone(-e_{1}-\dotsb-e_{s}+a_{1}f_{1}+\dotsb+a_{r}f_{r})&
\eta_{0}&:=\cone(-f_{0}-\dotsb-f_{r})\\
\rho_{i}&:=\cone(e_{i})\quad 1\leq i\leq s &
\eta_{j}&:=\cone(f_{j})\quad 1\leq j\leq r,
\end{array}
\]
and for $1\leq i\leq s$ and $1\leq j\leq r$ we define the $r+s-$dimensional cones
\[
\sigma_{ij}:=\cone(n(\rho_{0}),\dotsc,\widehat{n(\rho_{i})},\dotsc,n(\rho_{s}),n(\eta_{0}),\dotsc,\widehat{n(\eta_{j})},\dotsc,n(\eta_{r})).
\] 
Then $\Sigma(1)\hspace{-1mm}=\hspace{-1mm}\{\rho_{0},\dotsc,\rho_{s},\eta_{0},\dotsc,\eta_{r}\}$, and
$\Sigma_{max}\hspace{-1mm}=\hspace{-1mm}\{\sigma_{ij}\,|\,1\leq i\leq s,\,1\leq j\leq r\}$. 
From the exact sequence (\ref{Eq:Class group exact sequence}) we obtain the class group of $X=V_{s}(a_{1},\dotsc,a_{r})$ as $\Cl(X)=\coker\phi$, with $\phi:\ZZ^{s+r}\rightarrow\ZZ^{s+r+2}$ as in (\ref{Eq:Notation smooth toric class group}). Therefore, we have $\Cl(X)=\ZZ\langle[D_{\rho_{0}}],[D_{\eta_{0}}]\rangle$, $[D_{\rho_{i}}]=[D_{\rho_{0}}]$ for $1\leq i\leq s$ and $[D_{\eta_{j}}]=-a_{j}[D_{\rho_{0}}]+[D_{\eta_{0}}]$ for $1\leq j\leq r$.

\begin{notation}\label{Notation:Reflexives on V(s,a1,...,ar)}
Let $E$ be a $(\Cl(X),\ZZ^{s+r+2})-$graded reflexive $R-$module of rank $\l$. Let $\hat{E}=\{\hat{E}^{\tau}=:E^{\tau}(k^{\tau}_{1},\dotsc,k^{\tau}_{\l};E^{\tau}_{1},\dotsc,E^{\tau}_{\l})\}_{\tau\in\Sigma(1)}$ be its associated $\Sigma-$family (see Notation \ref{Notation:Reflexives}). We rename these filtrations as follows:
\[\text{for}\; 1\leq n\leq \l,\;
\left\{
\begin{array}{@{}ll}
i^{t}_{n}:=k^{\rho_{t}}_{n},&0\leq t\leq r\\
j^{u}_{n}:=k^{\eta_{u}}_{n},&0\leq u\leq s
\end{array}
\right.\;\text{and}\;
\left\{
\begin{array}{@{}ll}
F^{t}_{n}:=E^{\rho_{t}}_{n},&0\leq t\leq r\\
G^{u}_{n}:=E^{\eta_{u}}_{n},&0\leq u\leq s.
\end{array}
\right.
\]
In particular, we set 
\[
\delta_{E}:=\left(\sum_{t=0}^{s}i^{t}_{\l}-\sum_{u=1}^{s}a_{u}j^{u}_{\l},\sum_{u=0}^{s}j^{u}_{\l}\right)=\sum_{\tau}k^{\tau}_{\l}[D_{\tau}]\in\Cl(X)
.\]
By Proposition \ref{Proposition:twist}, the reflexive module $E(\delta_{E})=E\otimes R(\delta_{E})$ corresponds to the filtration  $\{E^{\tau}(k^{\tau}_{1}-k^{\tau}_{\l},\dotsc,k^{\tau}_{\l-1}-k^{\tau}_{\l},0;E^{\tau}_{1},\dotsc,E^{\tau}_{\l})\}_{\tau\in\Sigma(1)}$. Finally, given $\alpha=(p,q)=p[D_{\rho_{0}}]+q[D_{\eta_{0}}]$, we denote by $\hat{E}(p,q)$ the $\Sigma-$family of the reflexive module $E(p,q)$. By Proposition \ref{Proposition:twist}, 
\[
\hat{E}(p,q)^{\tau}\!\!=\!\hat{E}^{\tau}\;\text{for}\;\tau\in\Sigma(1)\setminus\{\rho_{0},\eta_{0}\}\,\text{and}\,
\left\{
\begin{array}{@{}l@{}}
\hat{E}(p,q)^{\rho_{0}}(k)=\hat{E}^{\rho_{0}}(k+p)\\
\hat{E}(p,q)^{\eta_{0}}(k)=\hat{E}^{\eta{0}}(k+q)
\end{array},
\,k\in\ZZ.
\right.
\]
\end{notation}

Our first goal is to bound $\supp(\Gamma E)$. Using Propositions \ref{Proposition:HH0} and the above twisted filtrations, for any $(p,q)\in\ZZ^{2}$ and $m\in M$, we have:
\begin{multline}\label{Decomposition E_(t,s)}
[\Gamma E_{(p,q)}]_{m}\cong H^{0}(X,\tilde{E}(p,q))_{m}\cong
\hat{E}^{\rho_{0}}(\langle m, \rho_{0}\rangle+p)\cap \hat{E}^{\rho_{1}}(\langle m, \rho_{1}\rangle)\cap\dotsb\\
\cap\hat{E}^{\rho_{s}}(\langle m, \rho_{s}\rangle)\cap 
\hat{E}^{\eta_{0}}(\langle m, \eta_{0}\rangle+q)\cap \hat{E}^{\eta_{1}}(\langle m, \eta_{1}\rangle)\cap\dotsb\cap\hat{E}^{\eta_{r}}(\langle m, \eta_{r}\rangle).
\end{multline} 
The following lemmas will be very useful.

\begin{lemma}\label{Lemma:System1}
Let $0\leq a_{1}\leq \dotsb \leq a_{r}$, $A$ and $B\geq 0$ be integers. The system
\begin{equation}\label{System1}
\left.
\begin{array}{r@{}l@{}}
a_{1} x_{1}+\dotsb+ a_{r} x_{r}\geq A\\
x_{1}+\dotsb+x_{r}\leq B
\end{array}
\right\}
\end{equation}
has a solution in $\ZZ^r_{\geq0}$ if and only if $A\leq a_{r} B$.
\end{lemma}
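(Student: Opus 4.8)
The plan is to prove the two directions separately; both are short, and the only substantive point is producing an explicit witness for the backward implication.

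For the forward direction (``only if''), I would take any solution $(x_{1},\dots,x_{r})\in\ZZ_{\geq0}^{r}$ of (\ref{System1}) and simply chain the inequalities: since $0\leq a_{i}\leq a_{r}$ and $x_{i}\geq0$ for all $i$,
\[
A\;\leq\;a_{1}x_{1}+\dots+a_{r}x_{r}\;\leq\;a_{r}(x_{1}+\dots+x_{r})\;\leq\;a_{r}B .
\]

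For the backward direction (``if''), assume $A\leq a_{r}B$ and concentrate all the weight on the variable $x_{r}$ of largest coefficient. First I would dispose of the degenerate cases: if $A\leq0$ (which in particular happens whenever $a_{r}=0$, since then $a_{r}B=0$), the zero vector already satisfies both inequalities of (\ref{System1}). Otherwise $A\geq1$, and together with $B\geq0$ and $A\leq a_{r}B$ this forces $a_{r}\geq1$; then I would set $x_{r}:=\lceil A/a_{r}\rceil$ and $x_{i}:=0$ for $i<r$. By construction $a_{r}x_{r}\geq A$, and since $B$ is an integer with $A/a_{r}\leq B$ we get $x_{1}+\dots+x_{r}=x_{r}=\lceil A/a_{r}\rceil\leq B$, so both inequalities of (\ref{System1}) hold.

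I do not expect a genuine obstacle here; the only step requiring attention is peeling off the case $A\leq 0$ (equivalently, handling $a_{r}=0$) before dividing by $a_{r}$ in the ceiling construction, so that the quotient $A/a_{r}$ is legitimate and the monotone comparison $a_{i}\leq a_{r}$ is invoked at the right place in the forward direction.
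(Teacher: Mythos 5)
Your proof is correct and follows essentially the same route as the paper: the forward direction is the identical chain $A\leq a_{1}x_{1}+\dotsb+a_{r}x_{r}\leq a_{r}(x_{1}+\dotsb+x_{r})\leq a_{r}B$, and the converse is again settled by a witness supported on the last coordinate. The only difference is that the paper takes the simpler witness $(0,\dotsc,0,B)$, for which $a_{r}B\geq A$ and $B\leq B$ hold immediately, so your case split on $A\leq 0$ and the ceiling $\lceil A/a_{r}\rceil$ are unnecessary, though not wrong.
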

\begin{proof}
First, we assume that $(x_{1},\dotsc,x_{r})\in\ZZ^{r}_{\geq0}$ is a solution of (\ref{System1}). In particular,
$x_{1}+\dotsb+x_{r}\leq B$ and $A\leq a_{1}x_{1}+\dotsb+a_{r}x_{r}$. Now, since $0\leq a_{1}\leq\dotsb \leq a_{r}$, we get $A\leq a_{1}x_{1}+\dotsb+a_{r}x_{r}\leq a_{r}x_{1}+\dotsb+a_{r}x_{r}\leq a_{r}B$, yielding $A\leq a_{r}B$. Conversely, if $A\leq a_{r} B$ then $(0,\dotsc,0,B)\in\ZZ^{r}_{\geq0}$ is directly a solution of (\ref{System1}).
\end{proof}

\begin{lemma}\label{Lemma:System2}
Let $0\leq a_{1}\leq \dotsb \leq a_{r}$, $A$ and $B$ be integers. The system
\begin{equation}\label{System2}
\left(
\begin{array}{@{}cccccc@{}}
1&\cdots&1&-a_{1}&\cdots&-a_{r}\\
0&\cdots&0&1&\cdots&1
\end{array}
\right)\cdot\ue\leq
\left(
\begin{array}{@{}r@{}}
-A\\
B
\end{array}
\right)
\end{equation}

has a solution $\ue\in\ZZ^{s+r}_{\geq0}$ if and only if $B\geq0$ and the system
\begin{equation}\label{System3}
\left.
\begin{array}{r@{}l@{}}
a_{1} x_{1}+\dotsb+ a_{r} x_{r}\geq &A\\
x_{1}+\dotsb+x_{r}\leq &B
\end{array}
\right\}
\end{equation}
 has a solution in $\ZZ^{r}_{\geq0}$.
\end{lemma}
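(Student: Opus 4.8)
The plan is to unwind the system~(\ref{System2}) variable-block by variable-block, separating the $s$ coordinates $e_1,\dotsc,e_s$ attached to the $x$-variables from the $r$ coordinates $e_{s+1},\dotsc,e_{s+r}$ attached to the $y$-variables. Write $\ue=(x_0,\dotsc,x_{s-1};x_1,\dotsc,x_r)$ (abusing the indices just to match~(\ref{System3})); then the second row of~(\ref{System2}) reads $x_1+\dotsb+x_r\le B$, and the first row reads $(x_0+\dotsb+x_{s-1})-(a_1x_1+\dotsb+a_rx_r)\le -A$, i.e. $a_1x_1+\dotsb+a_rx_r\ge A+(x_0+\dotsb+x_{s-1})$.

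For the ``only if'' direction: given a solution $\ue\in\ZZ^{s+r}_{\ge0}$ of~(\ref{System2}), the second row immediately forces $B\ge x_1+\dotsb+x_r\ge0$, so $B\ge0$. Moreover the nonnegativity of $x_0,\dotsc,x_{s-1}$ gives $a_1x_1+\dotsb+a_rx_r\ge A+(x_0+\dotsb+x_{s-1})\ge A$, so $(x_1,\dotsc,x_r)\in\ZZ^r_{\ge0}$ already satisfies the two inequalities of~(\ref{System3}). For the ``if'' direction: assume $B\ge0$ and that $(x_1,\dotsc,x_r)\in\ZZ^r_{\ge0}$ solves~(\ref{System3}); then simply set $x_0=\dotsb=x_{s-1}=0$. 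The second row of~(\ref{System2}) is the second inequality of~(\ref{System3}), and the first row becomes $-a_1x_1-\dotsb-a_rx_r\le -A$, which is exactly the first inequality of~(\ref{System3}). Hence $\ue=(0,\dotsc,0;x_1,\dotsc,x_r)$ is a solution of~(\ref{System2}).

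The only genuinely careful point is the bookkeeping: one must make sure that the case $s=0$ (no $x$-block) and the edge cases $A<0$ or $B=0$ are covered by the same argument, which they are, since the reductions above use only signs and nonnegativity, never strict positivity. Combined with Lemma~\ref{Lemma:System1} this will in the sequel reduce the solvability of~(\ref{System2}) to the single numerical condition $0\le A\le a_rB$ together with $B\ge0$; but that chaining is for later, not part of this lemma's proof. I expect no real obstacle here — the statement is essentially a normal-form observation, and the proof is a short two-direction verification.
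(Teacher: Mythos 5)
Your proof is correct and follows essentially the same route as the paper: for one direction you discard the first block of coordinates, using their nonnegativity to deduce $B\geq 0$ and $a_1x_1+\dotsb+a_rx_r\geq A$, and for the converse you pad a solution of~(\ref{System3}) with zeros in the first block. The only difference is notational bookkeeping, so nothing further is needed.
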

\begin{proof}
First we assume that $(e_{1},\dotsc,e_{s+r})\in\ZZ^{s+r}_{\geq0}$ is a solution of (\ref{System2}). Then $0\leq e_{s+1}+\dotsb+e_{s+r}\leq B$, and hence $B\geq0$. On the other hand, $a_{1}e_{s+1}+\dotsb+a_{r}e_{s+r}\geq e_{1}+\dotsb+e_{s}+A\geq A$. Thus, $(e_{s+1},\dotsc,e_{s+r})\in\ZZ^{r}_{\geq0}$ is a solution of (\ref{System3}). Conversely, if $B\geq0$ and $(x_{1},\dotsc,x_{r})$ is a solution of (\ref{System3}), then $(0,\dotsc,0,x_{1},\dotsc,x_{r})$ is directly a solution of (\ref{System2}).
\end{proof}

\begin{lemma}\label{Lemma:Metasystem}
Let $0\leq a_{1}\leq \dotsb \leq a_{r}$, $\lambda_{0},\dotsc,\lambda_{s},\mu_{0},\dotsc,\mu_{r}$ be integers. The system
\begin{equation}\label{Metasystem}
\left(
\begin{array}{@{}ccc@{\hspace{2mm}}ccc@{}}
-1&\cdots&-1&a_{1}&\cdots&a_{r}\\
1&\cdots&0&0&\cdots&0\\
\vdots&\ddots&\vdots&\vdots&&\vdots\\
0&\cdots&1&0&\cdots&0\\
0&\cdots&0&-1&\cdots&-1\\
0&\cdots&0&1&\cdots&0\\
\vdots&&\vdots&\vdots&\ddots&\vdots\\
0&\cdots&0&0&\cdots&1
\end{array}
\right)\cdot m\geq
\left(
\begin{array}{@{}c@{}}
\lambda_{0}\\
\lambda_{1}\\
\vdots\\
\lambda_{s}\\
\mu_{0}\\
\mu_{1}\\
\vdots\\
\mu_{r}
\end{array}
\right)
\end{equation}
has a solution $m\in\ZZ^{s+r}$ if and only if the system
\begin{equation}
\left(
\begin{array}{@{}cccccc@{}}
1&\cdots&1&-a_{1}&\cdots&-a_{r}\\
0&\cdots&0&1&\cdots&1
\end{array}
\right)\cdot\ue\leq
\left(
\begin{array}{@{}r@{}}
-\lambda_{0}-
\hspace{-1mm}\dotsb\hspace{-1mm}
-\lambda_{s}+a_{1}\mu_{1}+
\hspace{-1mm}\dotsb\hspace{-1mm}
+a_{r}\mu_{r}\\
-\mu_{0}-
\hspace{-1mm}\dotsb\hspace{-1mm}
-\mu_{r}
\end{array}
\right)
\end{equation}
has solutions in $\ZZ^{s+r}_{\geq0}$.
\end{lemma}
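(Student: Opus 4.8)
The statement is an equivalence between solvability of the system (\ref{Metasystem}) in $\ZZ^{s+r}$ and solvability of the reduced two-row system in $\ZZ^{s+r}_{\geq 0}$. The natural strategy is to perform an explicit change of variables that turns the inequalities $(\ref{Metasystem})$ into a nonnegativity condition on a new vector $\ue$, and then simply read off the remaining two inequalities. Write $m = (d_1,\dotsc,d_s,c_1,\dotsc,c_r)\in\ZZ^{s+r}$. The rows of the matrix in (\ref{Metasystem}) indexed by $1,\dotsc,s$ say $d_i\geq \lambda_i$, and the rows indexed by $s+2,\dotsc,s+r+1$ say $c_j\geq \mu_j$; the remaining two rows (the first and the $(s+2)$-th) say $-d_1-\dotsb-d_s+a_1 c_1+\dotsb+a_r c_r\geq \lambda_0$ and $-c_1-\dotsb-c_r\geq \mu_0$.

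\textbf{Key steps.} First I would substitute $e_i := d_i-\lambda_i$ for $1\leq i\leq s$ and $e_{s+j}:=c_j-\mu_j$ for $1\leq j\leq r$, so that the $s+r$ "diagonal" inequalities of (\ref{Metasystem}) become exactly $\ue = (e_1,\dotsc,e_{s+r})\in\ZZ^{s+r}_{\geq 0}$, and this substitution is an invertible affine bijection between $\ZZ^{s+r}$ and $\ZZ^{s+r}$. Second, I would rewrite the two remaining inequalities in terms of $\ue$. The inequality $-\sum_i d_i + \sum_j a_j c_j \geq \lambda_0$ becomes $-\sum_i e_i + \sum_j a_j e_{s+j} \geq \lambda_0 + \sum_{i=1}^s\lambda_i - \sum_{j=1}^r a_j\mu_j$, i.e.
\[
e_1+\dotsb+e_s - a_1 e_{s+1}-\dotsb-a_r e_{s+r}\leq -\lambda_0-\dotsb-\lambda_s+a_1\mu_1+\dotsb+a_r\mu_r,
\]
which is precisely the first row of the target system. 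Similarly $-\sum_j c_j\geq \mu_0$ becomes $e_{s+1}+\dotsb+e_{s+r}\leq -\mu_0-\dotsb-\mu_r$, the second row. Third, since the substitution is a bijection of $\ZZ^{s+r}$ carrying the feasible set of (\ref{Metasystem}) onto $\{\ue\in\ZZ^{s+r}_{\geq 0} : \text{target inequalities hold}\}$, the two systems are solvable simultaneously, which is the claim.

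\textbf{Main obstacle.} There is no real obstacle here: the proof is a bookkeeping exercise in shifting coordinates, and the only thing to be careful about is the sign conventions — making sure that $\lambda_i,\mu_j$ appear with the correct signs when collecting the constant terms, and that the $a_j$-weights are attached to the $c_j$ (equivalently $e_{s+j}$) variables rather than the $d_i$ variables. The hypothesis $0\leq a_1\leq\dotsb\leq a_r$ is not needed for this lemma itself (it will be invoked only when one afterwards feeds the reduced system into Lemma \ref{Lemma:System2} and Lemma \ref{Lemma:System1}); I would simply not use it in the proof. So the write-up amounts to: introduce $\ue$ by the shift above, observe the bijection, substitute into the last two inequalities, and match them termwise with the displayed target system.
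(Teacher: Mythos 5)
Your proposal is correct and is exactly the paper's argument: the paper proves this lemma by the same change of variables $e_{i}=d_{i}-\lambda_{i}$, $e_{s+j}=c_{j}-\mu_{j}$, which turns the diagonal rows into nonnegativity of $\ue$ and the two remaining rows into the displayed two-row system. Your additional observation that the hypothesis $0\leq a_{1}\leq\dotsb\leq a_{r}$ is not used in this lemma is also accurate; it is only needed when the reduced system is fed into Lemmas \ref{Lemma:System1} and \ref{Lemma:System2}.
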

\begin{proof}
It follows using the change of variables $e_{1}=d_{1}-\lambda_{1},\dotsc,e_{s}=d_{s}-\lambda_{s},\,e_{s+1}=d_{s+1}-\mu_{1},\dotsc,e_{s+r}=d_{s+r}-\mu_{r}$, where $m=(d_{1},\dotsc,d_{s+r})$.
\end{proof}

\begin{proposition}
The set
\[
L_{E}:=
\left\{\hspace{-1mm}
(p,q)\hspace{-1mm}
\left|
\begin{array}{@{}l@{}l@{}l@{}}
q\geq j_{1}^{0}+	&\dotsb+j_{1}^{r}\\
p+a_{r}q 			&\geq i_{1}^{0}+\dotsb+i_{1}^{s}+a_{r}j_{1}^{0}+\\&(a_{r}-a_{1})j_{1}^{1}+\dotsb+(a_{r}-a_{r-1})j_{1}^{r-1}
\end{array}
\right.\hspace{-1mm}
\right\}
\]
is a lower bound of $\supp(\Gamma E)$.
\end{proposition}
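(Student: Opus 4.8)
The plan is to recast ``$(p,q)\in\supp(\Gamma E)$'' as the solvability over $\ZZ^{s+r}$ of an explicit linear system of inequalities in a character $m$, and then to collapse that system by applying Lemmas~\ref{Lemma:Metasystem}, \ref{Lemma:System2} and \ref{Lemma:System1} in succession. Since here ``lower bound'' means $\supp(\Gamma E)\subset L_{E}$, only the necessity direction is needed, i.e.\ that $[\Gamma E]_{(p,q)}\neq 0$ forces the two inequalities defining $L_{E}$. First I would note that $[\Gamma E]_{(p,q)}=\bigoplus_{m\in M}[\Gamma E_{(p,q)}]_{m}$ by Lemma~\ref{Lemma:Grading and exact sequence} applied to the $(\Cl(X),\ZZ^{s+r+2})$-graded module $\Gamma E$ via (\ref{Eq:Class group exact sequence}), so that $(p,q)\in\supp(\Gamma E)$ produces a character $m=(d_{1},\dotsc,d_{s},g_{1},\dotsc,g_{r})\in M$ with $[\Gamma E_{(p,q)}]_{m}\neq 0$.

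By the decomposition (\ref{Decomposition E_(t,s)}), this vector space is the intersection of the filtration pieces $\hat E^{\rho_{0}}(\langle m,\rho_{0}\rangle+p)$, the $\hat E^{\rho_{i}}(\langle m,\rho_{i}\rangle)$ with $1\leq i\leq s$, $\hat E^{\eta_{0}}(\langle m,\eta_{0}\rangle+q)$, and the $\hat E^{\eta_{j}}(\langle m,\eta_{j}\rangle)$ with $1\leq j\leq r$; hence each of these pieces is nonzero. A piece $\hat E^{\tau}(k)$ of a reflexive filtration is nonzero exactly when $k$ is at least its first jumping number (Notation~\ref{Notation:Reflexives}), so, substituting the coordinates of the ray generators $n(\rho_{t})$ and $n(\eta_{u})$ recalled before the statement, these non-vanishing conditions are precisely the system (\ref{Metasystem}) with $\lambda_{0}=i^{0}_{1}-p$, $\lambda_{i}=i^{i}_{1}$ for $1\leq i\leq s$, $\mu_{0}=j^{0}_{1}-q$, and $\mu_{j}=j^{j}_{1}$ for $1\leq j\leq r$. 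Now Lemma~\ref{Lemma:Metasystem} makes solvability of (\ref{Metasystem}) equivalent to solvability over $\ZZ^{s+r}_{\geq0}$ of (\ref{System2}) with right-hand side $(-A,B)$, where $A=i^{0}_{1}+\dotsb+i^{s}_{1}-p-a_{1}j^{1}_{1}-\dotsb-a_{r}j^{r}_{1}$ and $B=q-j^{0}_{1}-\dotsb-j^{r}_{1}$; Lemma~\ref{Lemma:System2} forces $B\geq0$ together with solvability over $\ZZ^{r}_{\geq0}$ of (\ref{System3}); and Lemma~\ref{Lemma:System1} turns that into $A\leq a_{r}B$ (trivially so when $A<0$, since $a_{r},B\geq0$).

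Finally I would read off the two conditions. The inequality $B\geq0$ is exactly $q\geq j^{0}_{1}+\dotsb+j^{r}_{1}$. Rearranging $A\leq a_{r}B$ and collecting terms, the coefficient of each $j^{u}_{1}$ becomes $a_{r}-a_{u}$ (with the convention $a_{0}:=0$), so the $j^{r}_{1}$ contribution cancels and what remains is
\[
p+a_{r}q \geq i^{0}_{1}+\dotsb+i^{s}_{1}+a_{r}j^{0}_{1}+(a_{r}-a_{1})j^{1}_{1}+\dotsb+(a_{r}-a_{r-1})j^{r-1}_{1},
\]
which is the second defining inequality of $L_{E}$; hence $(p,q)\in L_{E}$, and $\supp(\Gamma E)\subset L_{E}$. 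Once the three lemmas are in hand the rest is bookkeeping, so I expect the only genuine obstacle to be the initial identification step: matching the non-vanishing conditions with (\ref{Metasystem}), where one must keep the two ``special'' rays $\rho_{0},\eta_{0}$ (which absorb the twists $p,q$) separate from the others, and then carrying out the sign and coefficient juggling in the final rearrangement so that the coefficients $a_{r}-a_{u}$ appear and the $j^{r}_{1}$ term drops out.
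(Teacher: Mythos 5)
Your proposal is correct and follows essentially the same route as the paper: it extracts a character $m$ with $[\Gamma E_{(p,q)}]_{m}\neq0$, translates the non-vanishing of the filtration pieces in (\ref{Decomposition E_(t,s)}) into system (\ref{Metasystem}) with exactly the substitutions $\lambda_{0}=i^{0}_{1}-p$, $\lambda_{i}=i^{i}_{1}$, $\mu_{0}=j^{0}_{1}-q$, $\mu_{j}=j^{j}_{1}$, and then chains Lemmas \ref{Lemma:Metasystem}, \ref{Lemma:System2} and \ref{Lemma:System1} to obtain $q\geq j^{0}_{1}+\dotsb+j^{r}_{1}$ and $A\leq a_{r}B$, whose rearrangement gives the second inequality. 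The bookkeeping (cancellation of the $j^{r}_{1}$ term and the coefficients $a_{r}-a_{u}$) is also exactly as in the paper's proof.
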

\begin{proof}
We see that $\supp(\Gamma E)\subset L_{E}$. Let $(p,q)\in \ZZ^{2}$ be such that $\Gamma E_{(p,q)}\neq0$, and $m\in\ZZ^{s+r}$ with $[\Gamma E_{(p,q)}]_{m}\neq0$. By (\ref{Decomposition E_(t,s)}) $\dim\hat{E}(p,q)^{\tau}(\langle m,\tau\rangle)\geq 1$. Hence, $m$ is a solution of (\ref{Metasystem}) setting $\lambda_{0}=i_{1}^{0}-p,\,\lambda_{1}=i_{1}^{1},\dotsc,\lambda_{s}=i_{1}^{s},\,\mu_{0}=j_{1}^{0}-q,\,\mu_{1}= j_{1}^{1},\dotsc,\mu_{r}=j_{1}^{r}$.  Then, by Lemma \ref{Lemma:Metasystem} the system 
\[\left.
\begin{array}{r@{}}
e_{1}+
\hspace{-1mm}\dotsb\hspace{-1mm}
+e_{s}-a_{1} e_{s+1}+
\hspace{-1mm}\dotsb\hspace{-1mm}
+ a_{r} e_{s+r}\leq i_{1}^{0}+
\hspace{-1mm}\dotsb\hspace{-1mm}
+i_{1}^{s}-p-a_{1}j_{1}^{1}-
\hspace{-1mm}\dotsb\hspace{-1mm}
-a_{r}j_{1}^{r}\\
e_{s+1}+
\hspace{-1mm}\dotsb\hspace{-1mm}
+e_{s+r}\leq q-j_{1}^{0}-
\hspace{-1mm}\dotsb\hspace{-1mm}
-j_{1}^{r}
\end{array}
\right\}
\]
has a solution in $\ZZ^{s+r}_{\geq0}$. Finally, by Lemmas \ref{Lemma:System2} and \ref{Lemma:System1}, this implies that $q-j_{1}^{0}-\dotsb-j_{1}^{r}\geq0$ and $i_{1}^{0}+\dotsb+i_{1}^{s}-p-a_{1}j_{1}^{1}-\dotsb-a_{r}j_{1}^{r}\leq a_{r}(q-j_{1}^{0}-\dotsb-j_{1}^{r})$. Thus, $q\geq j_{1}^{0}+\dotsb+j_{1}^{r}$ and $p+a_{r}q \geq i_{1}^{0}+\dotsb+i_{1}^{s}+a_{r}j_{1}^{0}+(a_{r}-a_{1})j_{1}^{1}+\dotsb+(a_{r}-a_{r-1})j_{1}^{r-1}$, and the result follows.
\end{proof}

Next we define the following subsets of $\ZZ^{2}$:
\[
I(k):=
\left\{\hspace{-1mm}
(p,q)\hspace{-1mm}
\left|
\begin{array}{@{}l@{\,}l@{}}
q\;\geq\; j_{\l}^{0}+&\dotsb+j_{\l}^{r}\\
p+a_{r}q\geq& i_{\l}^{0}+\dotsb+ i_{1}^{k}+\dotsb+i_{\l}^{s} +a_{r}j_{\l}^{0}+\\
& (a_{r}-a_{1})j_{\l}^{1}+\dotsb+
(a_{r}-a_{r-1})j_{\l}^{r-1}
\end{array}
\right.
\hspace{-1mm}
\right\}\!,0\leq k\leq s
\]
\[
J(k)\!:=\!
\left\{\hspace{-1mm}
(p,q)\hspace{-1mm}
\left|
\begin{array}{@{}l@{\,}l@{\;}}
q\;\geq \;j_{\l}^{0}+&\dotsb+j_{1}^{k}+\dotsb+j_{\l}^{r}\\
p+a_{r}q\geq &i_{\l}^{0}+\dotsb+i_{\l}^{s}+a_{r}j_{\l}^{0}+(a_{r}-a_{1})j_{\l}^{1}+\\
&\dotsb+(a_{r}-a_{k})j_{1}^{k}\dotsb+(a_{r}-a_{r-1})j_{\l}^{r-1}
\end{array}
\right.
\hspace{-3mm}
\right\}\!,0\leq k\leq r.
\]
\begin{proposition}
If either $(p,q)\in I(k)$ for any $0\leq k\leq s$, or $(p,q)\in J(k)$ for any $0\leq k\leq r$, then $\Gamma E_{(p,q)}\neq0$. In particular,
\[
U_{E}:=\left(\bigcup_{k=0}^{s}I(k)\right)\cup\left(\bigcup_{k=0}^{r}J(k)\right)
\]
is an upper bound of $\supp(\Gamma E)$.
\end{proposition}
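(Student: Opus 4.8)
The plan is to establish the two inclusions $I(k)\subseteq\supp(\Gamma E)$ for $0\le k\le s$ and $J(k)\subseteq\supp(\Gamma E)$ for $0\le k\le r$; since $U_{E}$ is the union of all these sets, this immediately gives $U_{E}\subseteq\supp(\Gamma E)$, i.e. that $U_{E}$ is an upper bound. To show that a point $(p,q)$ lies in $\supp(\Gamma E)$ it suffices, by (\ref{Decomposition E_(t,s)}), to produce a character $m\in M=\ZZ^{s+r}$ for which the intersection $\bigcap_{\tau\in\Sigma(1)}\hat E(p,q)^{\tau}(\langle m,\tau\rangle)$ is nonzero. I would arrange this by forcing \emph{all but one} of the $s+r+2$ subspaces in that intersection to equal the full space $\CC^{\l}$, while making the remaining one contain a nonzero subspace. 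Since $\hat E^{\tau}(i)\supseteq E^{\tau}_{1}\neq 0$ once $i\ge i_{1}^{\tau}$, $\hat E^{\tau}(i)=\CC^{\l}$ once $i\ge i_{\l}^{\tau}$, and twisting by $(p,q)$ only shifts the $\rho_{0}$- and $\eta_{0}$-filtrations by $-p$ and $-q$ (Proposition~\ref{Proposition:twist}), the requirement ``$\hat E(p,q)^{\tau}(\langle m,\tau\rangle)=\CC^{\l}$ for all $\tau$ but one, and $\supseteq E^{\tau}_{1}$ for that one'' is a system of lower bounds on the $\langle m,\tau\rangle$ of precisely the shape (\ref{Metasystem}).

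For $(p,q)\in I(k)$ I would take $\rho_{k}$ to be the exceptional ray: set $\lambda_{0}:=i_{\l}^{0}-p$, $\lambda_{j}:=i_{\l}^{j}$ for $1\le j\le s$ with $j\neq k$, $\lambda_{k}:=i_{1}^{k}$, $\mu_{0}:=j_{\l}^{0}-q$, and $\mu_{j}:=j_{\l}^{j}$ for $1\le j\le r$. Then a solution $m$ of (\ref{Metasystem}) with these data is exactly a character such that $\hat E(p,q)^{\tau}(\langle m,\tau\rangle)=\CC^{\l}$ for every ray $\tau\neq\rho_{k}$ and $\hat E(p,q)^{\rho_{k}}(\langle m,\rho_{k}\rangle)\supseteq E^{\rho_{k}}_{1}\neq 0$, so $[\Gamma E_{(p,q)}]_{m}\supseteq E^{\rho_{k}}_{1}\neq 0$ by (\ref{Decomposition E_(t,s)}) and hence $(p,q)\in\supp(\Gamma E)$. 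Thus everything reduces to the solvability of (\ref{Metasystem}). Running it successively through Lemma~\ref{Lemma:Metasystem}, Lemma~\ref{Lemma:System2} and Lemma~\ref{Lemma:System1}, solvability is equivalent to the two scalar conditions $B\ge 0$ and $A\le a_{r}B$, where $B=-(\mu_{0}+\dots+\mu_{r})$ and $A=(\lambda_{0}+\dots+\lambda_{s})-(a_{1}\mu_{1}+\dots+a_{r}\mu_{r})$. Substituting the chosen values, $B\ge 0$ reads $q\ge j_{\l}^{0}+\dots+j_{\l}^{r}$; and, after collecting the $a_{r}$-terms and using $a_{r}-a_{r}=0$ to kill the $j_{\l}^{r}$ contribution, $A\le a_{r}B$ reads $p+a_{r}q\ge i_{\l}^{0}+\dots+i_{1}^{k}+\dots+i_{\l}^{s}+a_{r}j_{\l}^{0}+(a_{r}-a_{1})j_{\l}^{1}+\dots+(a_{r}-a_{r-1})j_{\l}^{r-1}$. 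These are exactly the two inequalities defining $I(k)$, so $(p,q)\in I(k)$ forces (\ref{Metasystem}) to be solvable, proving $I(k)\subseteq\supp(\Gamma E)$.

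The case $(p,q)\in J(k)$ is the mirror image: one singles out the ray $\eta_{k}$ instead, keeping $\lambda_{0}=i_{\l}^{0}-p$, $\lambda_{j}=i_{\l}^{j}$ ($1\le j\le s$), $\mu_{0}=j_{\l}^{0}-q$, $\mu_{j}=j_{\l}^{j}$ ($j\neq k$), but setting $\mu_{k}:=j_{1}^{k}$; the same chain of Lemmas~\ref{Lemma:Metasystem}, \ref{Lemma:System2} and \ref{Lemma:System1} turns solvability of (\ref{Metasystem}) into the two defining inequalities of $J(k)$. Hence $J(k)\subseteq\supp(\Gamma E)$, and therefore $U_{E}=\bigl(\bigcup_{k=0}^{s}I(k)\bigr)\cup\bigl(\bigcup_{k=0}^{r}J(k)\bigr)\subseteq\supp(\Gamma E)$.

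I expect the substance of the proof to be bookkeeping rather than ideas: the care points are (i) placing $i_{1}^{k}$, resp.\ $j_{1}^{k}$, and not $i_{\l}^{k}$, resp.\ $j_{\l}^{k}$, in the $k$-th slot of the $\lambda$'s, resp.\ $\mu$'s; (ii) tracking the $-p$ and $-q$ shifts from the twist through the three lemmas; and (iii) carrying out the $a_{r}$-cancellations so that the two output inequalities reproduce the definitions of $I(k)$ and $J(k)$ verbatim. One should also record that the single non-full subspace is genuinely nonzero, which is exactly why $E^{\tau}_{1}\neq 0$ (the Klyachko filtration of a rank-$\l$ reflexive module is nonzero at its first jump) enters the argument.
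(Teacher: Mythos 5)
Your proposal is correct and is essentially the paper's own argument: you single out one exceptional ray ($\rho_k$ or $\eta_k$) where the twisted filtration need only contain its first nonzero step $E^{\tau}_{1}$ while all other rays give the full space $\CC^{\l}$, reduce the existence of such a character to the solvability of the system (\ref{Metasystem}) via Lemmas \ref{Lemma:Metasystem}, \ref{Lemma:System2} and \ref{Lemma:System1}, and verify that the resulting conditions $B\geq 0$ and $A\leq a_{r}B$ are exactly the inequalities defining $I(k)$, resp.\ $J(k)$, before concluding with the decomposition (\ref{Decomposition E_(t,s)}). The only difference is presentational: you make explicit the choice of $\lambda$'s and $\mu$'s and the $a_{r}$-cancellation that the paper leaves implicit.
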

\begin{proof}
If $(p,q)\in I(k)$ for some integer $0\leq k\leq s$, we have that $q- j_{\l}^{0}-\dotsb-j_{\l}^{r}\geq0$ and $i_{\l}^{0}+\dotsb+i_{1}^{k} +\dotsb+i_{\l}^{s}-p-a_{1}j_{\l}^{1}-\dotsb-a_{r}j_{\l}^{r}\leq a_{r}(q-j_{\l}^{0}-\dotsb-j_{\l}^{r})$. Hence, by Lemmas \ref{Lemma:System1}, \ref{Lemma:System2} and \ref{Lemma:Metasystem}, the system of inequalities
\[
\left(
\begin{array}{@{}ccc@{\hspace{2mm}}ccc@{}}
-1&\cdots&-1&a_{1}&\cdots&a_{r}\\
1&\cdots&0&0&\cdots&0\\
\vdots&\ddots&\vdots&\vdots&&\vdots\\
0&\cdots&1&0&\cdots&0\\
0&\cdots&0&-1&\cdots&-1\\
0&\cdots&0&1&\cdots&0\\
\vdots&&\vdots&\vdots&\ddots&\vdots\\
0&\cdots&0&0&\cdots&1
\end{array}
\right)\cdot m\geq
\left(
\begin{array}{@{}c@{}}
i_{\l}^{0}-p\\
i_{\l}^{1} \\
\vdots\\
i_{1}^{k}\\
\vdots\\
i_{\l}^{s}\\
j_{\l}^{0}-q\\
j_{l}^{1}\\
\vdots\\
j_{\l}^{r}
\end{array}
\right)
\]
has a solution $m\in\ZZ^{s+r}$. By Notations \ref{Notation:Reflexives} and \ref{Notation:Reflexives on V(s,a1,...,ar)}, this means that $\hat{E}(p,q)(\langle m, \tau\rangle)\cong\CC^{\l}$ for all $\tau\in\Sigma(1)\setminus\{\rho_{k}\}$ and $\dim\hat{E}(p,q)(\langle m, \rho_{k}\rangle)\geq1$. Thus, from (\ref{Decomposition E_(t,s)}), it follows that $\dim[\Gamma E_{(p,q)}]_{m}\geq1$ and hence $\Gamma E_{(p,q)}\neq0$. Symmetrically, if $(p,q)\in J(k)$ for some integer $0\leq k\leq r$, we obtain that  $\hat{E}(p,q)(\langle m, \tau\rangle)\cong\CC^{\l}$ for all $\tau\in\Sigma(1)\setminus\{\eta_{k}\}$ and $\dim\hat{E}(p,q)(\langle m, \eta_{k}\rangle)\geq1$, implying that $\Gamma E_{(p,q)}\neq0$.
\end{proof}

The end of the section is devoted to find a bound for the multigraded regularity index $\ri(\Gamma E)$ using Proposition \ref{Proposition:Hilbert from polytope} (iii). Let $\um\in\{1,\dotsc,\l\}^{s}$ and $\un\in\{1,\dotsc,\l\}^{r}$ and we write $\um:=(m_{0},\dotsc,m_{s})$ and $\un:=(n_{0},\dotsc,n_{r})$. The polytope $\Omega_{(\um,\un)}(p,q)$ defined in (\ref{Eq:Polytop system reflexive}) is given by the following system of linear inequalities:
\begin{equation}\label{System intersection}
\left(
\begin{array}{@{}l@{}}
i_{m_{0}+1}^{0}-p\\
i_{m_{1}+1}^{1} \\
\vdots\\
i_{m_{s}+1}^{s}\\
j_{n_{0}+1}^{0}-q\\
j_{n_{1}+1}^{1}\\
\vdots\\
j_{n_{r}+1}^{r}
\end{array}
\right)>
\left(
\begin{array}{@{}ccc@{\hspace{2mm}}ccc@{}}
-1&\cdots&-1&a_{1}&\cdots&a_{r}\\
1&\cdots&0&0&\cdots&0\\
\vdots&\ddots&\vdots&\vdots&&\vdots\\
0&\cdots&1&0&\cdots&0\\
0&\cdots&0&-1&\cdots&-1\\
0&\cdots&0&1&\cdots&0\\
\vdots&&\vdots&\vdots&\ddots&\vdots\\
0&\cdots&0&0&\cdots&1
\end{array}
\right)\cdot m\geq
\left(
\begin{array}{@{}l@{}}
i_{m_{0}}^{0}-p\\
i_{m_{1}}^{1} \\
\vdots\\
i_{m_{s}}^{s}\\
j_{n_{0}}^{0}-q\\
j_{n_{1}}^{1}\\
\vdots\\
j_{n_{r}}^{r}
\end{array}
\right)
\end{equation}
as before, we set $i^{k}_{\l+1}=j^{k}_{\l+1}:=\infty$ and $\Psi_{(\um,\un)}(p,q):=\Omega_{(\um,\un)}(p,q)\cap\ZZ^{s+r}$.
 
\begin{remark}\label{Remark:HilbertFunction}
Recall that 
$m\in\Psi_{(\um,\un)}(p,q)$, if and only if $[\Gamma E_{(p,q)}]_{m} = F_{m_{0}}^{0}\cap F_{m_{1}}^{1}\cap\dotsb\cap F_{m_{s}}^{s}\cap G_{n_{0}}^{0}\cap G_{n_{1}}^{1}\cap\dotsb \cap G_{n_{r}}^{r}$. 
In particular, if $m\in\ZZ^{s+r}$ and $[\Gamma E_{(p,q)}]_{m}\neq0$, then $m\in \Psi_{(\um,\un)}(p,q)$ for some $\um$ and $\un$.
\end{remark}
To study the behaviour of $\Psi_{(\um,\un)}(p,q)$ with respect to $(p,q)$ from the system (\ref{System intersection}), we introduce two smaller dimensional systems. Let us first define $\Psi_{\un}(q)\subset \ZZ^{r}$ as the set of integer solutions of the system:
\begin{equation}\label{System jotes}
\left(
\begin{array}{@{}l@{}}
j_{n_{0}+1}^{0}\\
j_{n_{1}+1}^{1}\\
\vdots\\
j_{n_{r}+1}^{r}
\end{array}
\right)>
\left(
\begin{array}{@{}ccc@{}}
-1&\cdots&-1\\
1&\cdots &0\\
\vdots&\ddots&\vdots\\
0&\cdots&1
\end{array}
\right)\cdot \uc\geq
\left(
\begin{array}{@{}l@{}}
j_{n_{0}}^{0}-q\\
j_{n_{1}}^{1}\\
\vdots\\
j_{n_{r}}^{r}
\end{array}
\right).
\end{equation}
For any $\uc:=(c_{1},\dotsc,c_{r})\in \Psi_{\un}(q)$ we set $A(\uc):=a_{1} c_{1}+\dotsb+a_{r}c_{r}$ and we define $\Psi_{\um}(p;\uc)\subset\ZZ^{s}$ to be the set of integer solutions of the system:

\begin{equation}\label{System i's}
\left(
\begin{array}{l}
i_{m_{0}+1}^{0}-p-A(\uc)\\
i_{m_{1}+1}^{0}\\
\vdots\\
i_{m_{s}+1}^{0}
\end{array}
\right)>
\left(
\begin{array}{@{}ccc@{}}
-1&\cdots&-1\\
1&\cdots&0\\
\vdots&\ddots&\vdots\\
0&\cdots&1
\end{array}
\right)\cdot m\geq
\left(
\begin{array}{l}
i_{m_{0}}^{0}-p-A(\uc)\\
i_{m_{1}}^{0}\\
\vdots\\
i_{m_{s}}^{0}
\end{array}
\right).
\end{equation}
Thus, $\Psi_{\um}(p;\uc)\times \{\uc\}\subset \Psi_{(\um,\un)}(p,q)$, and we can {\em slice} the set $\Psi_{(\um,\un)}(p,q)$ as
\begin{equation}\label{Sliced system solutions}
\Psi_{(\um,\un)}(p,q)=\bigcup_{\uc\in\Psi_{\un}(q)}\Psi_{\um}(p;\uc)\times \{\uc\}.
\end{equation}

We start studying system (\ref{System jotes}).
\begin{lemma}\label{Lemma:cota s}
If $q\geq j_{\l}^{0}+\dotsb+j_{\l}^{r}-1$ and $n_{0},\dotsc,n_{r}<\l$, then $\Psi_{\un}(q)=\emptyset$.
\end{lemma}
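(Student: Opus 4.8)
The plan is to analyze system \eqref{System jotes} directly. Recall that $\Psi_{\un}(q)$ consists of integer points $\uc = (c_1,\dotsc,c_r) \in \ZZ^r$ satisfying, for each $1 \leq k \leq r$, the constraint $j^k_{n_k} \leq c_k < j^k_{n_k+1}$, together with the ``top row'' constraint $j^0_{n_0} - q \leq -(c_1 + \dotsb + c_r) < j^0_{n_0+1}$. Since we are assuming $n_0, \dotsc, n_r < \l$, all the relevant thresholds $j^k_{n_k+1}$ (for $0 \leq k \leq r$) are genuine integers, not $\infty$.

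First I would extract from the per-coordinate constraints an upper bound on $c_1 + \dotsb + c_r$: because $c_k \leq j^k_{n_k+1} - 1 \leq j^k_{\l} - 1$ for each $1 \leq k \leq r$ (using that the filtration jump indices are non-decreasing, $j^k_{n_k+1} \leq j^k_{\l}$ since $n_k + 1 \leq \l$), we get $c_1 + \dotsb + c_r \leq (j^1_{\l} - 1) + \dotsb + (j^r_{\l} - 1) = j^1_{\l} + \dotsb + j^r_{\l} - r$. Next I would use the top-row constraint, which forces $c_1 + \dotsb + c_r \leq q - j^0_{n_0}$, and combine it with the hypothesis. Actually the cleanest route is the reverse inequality: the top-row constraint also gives $-(c_1 + \dotsb + c_r) < j^0_{n_0+1}$, i.e.\ $c_1 + \dotsb + c_r > -j^0_{n_0+1} \geq -j^0_{\l}$, so $c_1 + \dotsb + c_r \geq -j^0_{\l} + 1$. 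Hmm, the sign bookkeeping here is the delicate part, so I would set it up carefully.

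Here is the contradiction I expect to derive. From the top row, $-(c_1+\dotsb+c_r) \geq j^0_{n_0} - q$, that is, $c_1 + \dotsb + c_r \leq q - j^0_{n_0}$; and combined with the per-coordinate upper bound we would want this to be consistent. The real squeeze comes from the \emph{lower} direction on the top row together with the hypothesis $q \geq j^0_\l + \dotsb + j^r_\l - 1$: rearranging the hypothesis gives $q - (j^1_\l + \dotsb + j^r_\l) \geq j^0_\l - 1$, while the top-row strict inequality $-(c_1 + \dotsb + c_r) < j^0_{n_0+1} \leq j^0_\l$ combined with $c_1 + \dotsb + c_r \leq j^1_\l + \dotsb + j^r_\l - r$ (with $r \geq 1$) should be incompatible. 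Concretely: the per-coordinate bounds and the top row together pin $c_1 + \dotsb + c_r$ into an interval whose length is controlled, and the hypothesis pushes $q$ just far enough that this interval is empty. The main obstacle is purely the careful interval arithmetic --- making sure each $\leq$ versus $<$ and each use of $n_k < \l$ (hence $n_k + 1 \leq \l$, hence $j^k_{n_k+1} \leq j^k_\l$) is deployed correctly so that the ``$-1$'' in the hypothesis lands exactly against the ``$-r$'' (plus the strictness) coming from the constraints, forcing $\Psi_{\un}(q) = \emptyset$. I would also double-check the symmetric roles: the argument must use that \emph{all} of $n_0, \dotsc, n_r$ are $< \l$, since if any $n_k = \l$ the threshold $j^k_{\l+1} = \infty$ disappears and the bound on $c_k$ (or on the sum) is lost.
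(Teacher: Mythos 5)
There is a genuine gap, and it originates in how you read the top row of system (\ref{System jotes}). You take the displayed system literally, so your strict inequality is $-(c_{1}+\dotsb+c_{r})<j^{0}_{n_{0}+1}$, i.e.\ $c_{1}+\dotsb+c_{r}>-j^{0}_{n_{0}+1}$, a bound that does not involve $q$ at all. But (\ref{System jotes}) is obtained by slicing (\ref{System intersection}), whose $\eta_{0}$-row carries the shift by $q$ on \emph{both} sides; the intended constraint (the one the paper itself uses here and again in the proof of Theorem \ref{Theorem:Bound hilbert polynomial}) is $q-j^{0}_{n_{0}}\geq c_{1}+\dotsb+c_{r}>q-j^{0}_{n_{0}+1}$, the missing $-q$ in the top-left entry of the display being a typo. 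With your reading the statement is in fact false: for $q\gg0$ the only $q$-dependent constraint, $c_{1}+\dotsb+c_{r}\leq q-j^{0}_{n_{0}}$, becomes vacuous, and the box $j^{k}_{n_{k}}\leq c_{k}<j^{k}_{n_{k}+1}$ intersected with the half-space $c_{1}+\dotsb+c_{r}>-j^{0}_{n_{0}+1}$ is in general nonempty. Consistently with this, the contradiction you sketch never closes: the two inequalities you propose to play off each other, $c_{1}+\dotsb+c_{r}>-j^{0}_{\l}$ and $c_{1}+\dotsb+c_{r}\leq j^{1}_{\l}+\dotsb+j^{r}_{\l}-r$, are compatible unless $j^{0}_{\l}+j^{1}_{\l}+\dotsb+j^{r}_{\l}\leq r$, and neither involves $q$, so the hypothesis $q\geq j^{0}_{\l}+\dotsb+j^{r}_{\l}-1$ is never actually used. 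You flag the ``sign bookkeeping'' as the delicate point but do not carry it out, and with this setup it cannot be carried out.

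The correct squeeze, which is the paper's argument, is short once the top row is read with the $q$-shift on the strict side. From the per-coordinate rows, $c_{1}+\dotsb+c_{r}<j^{1}_{n_{1}+1}+\dotsb+j^{r}_{n_{r}+1}$; since $n_{0},\dotsc,n_{r}<\l$ all these thresholds are finite and $j^{k}_{n_{k}+1}\leq j^{k}_{\l}$. The hypothesis then gives $q-j^{0}_{n_{0}+1}\geq q-j^{0}_{\l}\geq j^{1}_{\l}+\dotsb+j^{r}_{\l}-1\geq j^{1}_{n_{1}+1}+\dotsb+j^{r}_{n_{r}+1}-1>c_{1}+\dotsb+c_{r}-1$, hence $q-j^{0}_{n_{0}+1}\geq c_{1}+\dotsb+c_{r}$ by integrality, contradicting the strict lower bound $c_{1}+\dotsb+c_{r}>q-j^{0}_{n_{0}+1}$ coming from the $\eta_{0}$-row. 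In other words, the hypothesis on $q$ only bites against the $q$-shifted strict inequality; your version of the top row discards exactly the piece of the system that makes the lemma true.
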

\begin{proof}
Let $\uc\in\ZZ^{r}$ be a solution of
\[
\left.
\begin{array}{r@{}l@{}}
j_{n_{1}+1}^{1}>c_{1}&\geq j_{n_{1}}^{1}\\
\vdots\\
j_{n_{r}+1}^{r}>c_{r}&\geq j_{n_{r}}^{r}
\end{array}
\right\}.
\]
In particular, $j_{n_{1}+1}^{1}+\dotsb+j_{n_{r}+1}^{r}>c_{1}+\dotsb+c_{r}\geq j_{n_{1}}^{1}+\dotsb+j_{n_{r}}^{r}$. On the other hand, the assumption on $q$ provides that 
$q-j_{\l}^{0}\geq j_{\l}^{1}+\dotsb+j_{\l}^{r}-1$. 
If $n_{0},\dotsc,n_{r}<\l$, then $j_{n_{0}+1}^{0},\dotsc,j_{n_{r}+1}^{r}<\infty$ and 
$q-j_{n_{0}+1}^{0}\geq q-j_{\l}^{0}\geq 
j_{\l}^{1}+\dotsb+j_{\l}^{r}-1\geq 
j_{n_{1}+1}^{1}+\dotsb+j_{n_{r}+1}^{r}-1$. Then, $q-j_{n_{0}+1}^{0}\geq j_{n_{1}+1}^{1}+\dotsb+j_{n_{r}+1}^{r}-1>c_{1}+\dotsb+c_{r}-1$. Hence, $q-j_{n_{0}+1}^{0}\geq c_{1}+\dotsb+c_{r}$ and it does not hold that $q-j_{n_{0}}^{0}\geq c_{1}+\dotsb+c_{r}>q-j_{n_{0}+1}^{0}$.
\end{proof}

\begin{lemma}\label{Lemma:cota t}
Let us fix $\un\in\{1,\dotsc,\l\}^{r}$ and $p,q\in\ZZ$ such that $p\geq i_{\l}^{0}+\dotsb+i_{\l}^{s}-a_{1}j_{1}^{1}-\dotsb-a_{r}j_{1}^{r}-1$. If $m_{0},\dotsc,m_{s}<\l$, then $\Psi_{\um}(p;\uc)=\emptyset$ for any solution $\uc\in\Psi_{\un}(q)$.
\end{lemma}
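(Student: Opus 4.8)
The plan is to mirror the argument of Lemma \ref{Lemma:cota s}, but now applied to the system (\ref{System i's}) rather than (\ref{System jotes}), the new ingredient being that the shift $A(\uc)=a_{1}c_{1}+\dotsb+a_{r}c_{r}$ must be controlled in terms of $\uc\in\Psi_{\un}(q)$. First I would unwind the hypotheses: a solution $m=(d_{1},\dotsc,d_{s})\in\Psi_{\um}(p;\uc)$ of (\ref{System i's}) satisfies, in its last $s$ rows, $j$-style inequalities $i^{k}_{m_{k}+1}>d_{k}\geq i^{k}_{m_{k}}$ for $1\leq k\leq s$, and in its first row $i^{0}_{m_{0}+1}-p-A(\uc)>-(d_{1}+\dotsb+d_{s})\geq i^{0}_{m_{0}}-p-A(\uc)$, i.e. $p+A(\uc)-i^{0}_{m_{0}+1}<d_{1}+\dotsb+d_{s}\leq p+A(\uc)-i^{0}_{m_{0}}$. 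Adding the $s$ two-sided bounds on the $d_{k}$ gives $i^{1}_{m_{1}+1}+\dotsb+i^{s}_{m_{s}+1}>d_{1}+\dotsb+d_{s}\geq i^{1}_{m_{1}}+\dotsb+i^{s}_{m_{s}}$, so the system can only be consistent when the two windows for $d_{1}+\dotsb+d_{s}$ overlap, which forces
\[
p+A(\uc)-i^{0}_{m_{0}+1}<i^{1}_{m_{1}+1}+\dotsb+i^{s}_{m_{s}+1}.
\]

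**Key step: bounding $A(\uc)$.** The decisive point is that for $\uc\in\Psi_{\un}(q)$ the quantity $A(\uc)$ is bounded below by $a_{1}j^{1}_{1}+\dotsb+a_{r}j^{r}_{1}$; indeed each coordinate of $\uc$ satisfies $c_{k}\geq j^{k}_{n_{k}}\geq j^{k}_{1}$ for $1\leq k\leq r$ (using $n_{k}\geq 1$ and monotonicity of the filtration indices from Notation \ref{Notation:Reflexives}), and all $a_{k}\geq 0$, hence $A(\uc)=\sum a_{k}c_{k}\geq\sum a_{k}j^{k}_{1}$. Now assume for contradiction that $\Psi_{\um}(p;\uc)\neq\emptyset$ with $m_{0},\dotsc,m_{s}<\l$. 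Then all the indices $i^{0}_{m_{0}+1},\dotsc,i^{s}_{m_{s}+1}$ are finite (they are $\leq i^{k}_{\l}<\infty$), and by the monotonicity $i^{k}_{m_{k}+1}\leq i^{k}_{\l}$. Combining this with the overlap inequality above and with the lower bound on $A(\uc)$:
\[
p\;\leq\; i^{0}_{m_{0}+1}+i^{1}_{m_{1}+1}+\dotsb+i^{s}_{m_{s}+1}-A(\uc)-1\;\leq\; i^{0}_{\l}+\dotsb+i^{s}_{\l}-\bigl(a_{1}j^{1}_{1}+\dotsb+a_{r}j^{r}_{1}\bigr)-1,
\]
which contradicts the hypothesis $p\geq i^{0}_{\l}+\dotsb+i^{s}_{\l}-a_{1}j^{1}_{1}-\dotsb-a_{r}j^{r}_{1}-1$. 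Hence $\Psi_{\um}(p;\uc)=\emptyset$.

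**Where the difficulty lies.** The routine part is the translation of the two matrix inequalities of (\ref{System i's}) into the telescoped scalar inequality on $d_{1}+\dotsb+d_{s}$; this is formally identical to what is done inside Lemma \ref{Lemma:cota s}. The only genuinely new step — and the one I would write out most carefully — is the uniform lower bound $A(\uc)\geq a_{1}j^{1}_{1}+\dotsb+a_{r}j^{r}_{1}$ valid for every $\uc\in\Psi_{\un}(q)$, since it is what makes the bound on $p$ independent of the particular $\uc$ (and independent of $\un$, as the statement requires). One should double-check the edge cases where some $a_{k}=0$ (then that summand simply drops out, consistent with the ordering $0\leq a_{1}\leq\dotsb\leq a_{r}$) and the fact that $\Psi_{\un}(q)$ may already be empty, in which case the conclusion is vacuous. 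No strict/non-strict inequality is lost along the way because the window for $d_{1}+\dotsb+d_{s}$ coming from the first row of (\ref{System i's}) is half-open exactly as in the cited lemma, so the "$-1$" in the hypothesis on $p$ is the correct sharp shift.
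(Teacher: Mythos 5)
Your overall strategy is the same as the paper's: translate the rows of (\ref{System i's}), telescope to compare $d_{1}+\dotsb+d_{s}$ with $i^{1}_{m_{1}+1}+\dotsb+i^{s}_{m_{s}+1}$, use $m_{k}<\l$ to get $i^{k}_{m_{k}+1}\leq i^{k}_{\l}$, and bound $A(\uc)\geq a_{1}j^{1}_{1}+\dotsb+a_{r}j^{r}_{1}$ via $c_{k}\geq j^{k}_{n_{k}}\geq j^{k}_{1}$ --- the latter is exactly the paper's chain $a_{1}j^{1}_{1}+\dotsb+a_{r}j^{r}_{1}\leq a_{1}j^{1}_{n_{1}}+\dotsb+a_{r}j^{r}_{n_{r}}\leq a_{1}c_{1}+\dotsb+a_{r}c_{r}$. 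However, your final step has a genuine off-by-one gap: from the overlap condition $p+A(\uc)-i^{0}_{m_{0}+1}<i^{1}_{m_{1}+1}+\dotsb+i^{s}_{m_{s}+1}$ you deduce $p\leq i^{0}_{\l}+\dotsb+i^{s}_{\l}-(a_{1}j^{1}_{1}+\dotsb+a_{r}j^{r}_{1})-1$ and declare this to contradict the hypothesis $p\geq i^{0}_{\l}+\dotsb+i^{s}_{\l}-a_{1}j^{1}_{1}-\dotsb-a_{r}j^{r}_{1}-1$. It does not: both hold when $p$ equals this common value, which is precisely the boundary case the lemma must cover (the bound is sharp and is used verbatim in Theorem \ref{Theorem:Bound hilbert polynomial}). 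Your closing remark that ``no strict/non-strict inequality is lost'' is where the slip occurs: the integer $d_{1}+\dotsb+d_{s}$ is bounded \emph{strictly} on both sides, but you converted only one of the two strict inequalities into a non-strict one.

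The repair is short and brings you back to the paper's argument. Since $d_{1}+\dotsb+d_{s}$ is an integer lying strictly between the integers $p+A(\uc)-i^{0}_{m_{0}+1}$ and $i^{1}_{m_{1}+1}+\dotsb+i^{s}_{m_{s}+1}$ (finite because $m_{k}<\l$), one in fact has $p+A(\uc)-i^{0}_{m_{0}+1}\leq i^{1}_{m_{1}+1}+\dotsb+i^{s}_{m_{s}+1}-2$, which does contradict the hypothesis. Equivalently, argue as the paper does: the hypothesis together with $i^{k}_{m_{k}+1}\leq i^{k}_{\l}$ and $A(\uc)\geq a_{1}j^{1}_{1}+\dotsb+a_{r}j^{r}_{1}$ gives $p-i^{0}_{m_{0}+1}\geq i^{1}_{m_{1}+1}+\dotsb+i^{s}_{m_{s}+1}-A(\uc)-1>d_{1}+\dotsb+d_{s}-A(\uc)-1$, and integrality upgrades this strict inequality to $p-i^{0}_{m_{0}+1}\geq d_{1}+\dotsb+d_{s}-A(\uc)$, which is incompatible with the first row of (\ref{System i's}), namely $d_{1}+\dotsb+d_{s}-A(\uc)>p-i^{0}_{m_{0}+1}$. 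With that adjustment your proof coincides with the paper's.
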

\begin{proof}
Let $(d_{1},\dotsc,d_{s})\in\ZZ^{s}$ be a solution of 
\[
\left.
\begin{array}{r@{}l@{}}
i_{m_{1}+1}^{1}>d_{1}&\geq i_{m_{1}}^{1}\\
\vdots\\
i_{m_{s}+1}^{s}>d_{s}&\geq i_{m_{s}}^{s}\\
\end{array}
\right\}.
\]
In particular, $i_{m_{1}+1}^{1}+\dotsb+i_{m_{s}+1}^{s}>d_{1}+\dotsb+d_{s}\geq i_{m_{1}}^{1}+\dotsb+i_{m_{s}}^{s}$. On the other hand, the assumption on $p$ implies that $p-i_{\l}^{0}\geq i_{\l}^{1}+\dotsb+i_{\l}^{s}-(a_{1}j_{1}^{1}+\dotsb+a_{r}j_{1}^{r})-1$. Let us fix any solution $(c_{1},\dotsc,c_{r})\in \Psi_{\un}(q)$.
If $m_{0},\dotsc,m_{s}<\l$, then $i_{m_{0}+1}^{0},\dotsc,i_{m_{s}+1}^{s}<\infty$ and 
$p-i_{m_{0}+1}^{0} \geq p - i_{\l} \geq i_{\l}^{1}+\dotsb+i_{\l}^{s}-(a_{1}j_{1}^{1}+\dotsb+a_{r}j_{1}^{r})-1\geq
i_{m_{1}+1}^{1}+\dotsb+i_{m_{s}+1}^{s}-(a_{1}j_{n_{1}}^{1}+\dotsb+a_{r}j_{n_{r}}^{r})-1\geq 
i_{m_{1}+1}^{1}+\dotsb+i_{m_{s}+1}^{s}-(a_{1}c_{1}+\dotsb+a_{r}c_{r})-1$. 
So,
$p-i_{m_{0}+1}^{0}\geq 
i_{m_{1}+1}^{1}+\dotsb+i_{m_{s}+1}^{s}-(a_{1}c_{1}+\dotsb+a_{r}c_{r})-1 > 
d_{1}+\dotsc+d_{s}-(a_{1}c_{1}+\dotsb+a_{r}c_{r})-1$.
Hence, $p-i_{m_{0}+1}^{0}\geq
d_{1}+\dotsc+d_{s}-(a_{1}c_{1}+\dotsb+a_{r}c_{r})$ and $(d_{1},\dotsc,d_{s})$ is not a solution of 
$p-i_{m_{0}}^{0}\geq 
d_{1}+\dotsb+d_{s} - (a_{1} c_{1}+\dotsb+a_{r}c_{r}) > 
p-i_{m_{0}+1}^{0}$.
\end{proof}

\begin{lemma}\label{Lemma:Polinomi i's}
Let us fix $\um\in\{1,\dotsc,\l\}^{s}$, $\un\in\{1,\dotsc,\l\}^{r}$ and $\uc\in\in\Psi_{\un}(q)$ a solution. If $p\geq i_{\l}^{0}+\dotsb+i_{\l}^{s}-a_{1}j_{1}^{1}-\dotsb-a_{r}j_{1}^{r}-1$, then $|\Psi_{\um}(p;\uc)|$ is a polynomial in $\CC[p,c_{1},\dotsc,c_{r}]$.
\end{lemma}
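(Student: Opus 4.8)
The plan is to count the lattice points of $\Psi_{\um}(p;\uc)$ by slicing off a box of fixed size and reducing the remaining count to a stars-and-bars formula, and then to check that the hypothesis on $p$ forces us past the threshold at which that formula is an honest polynomial rather than merely a quasi-polynomial. Write $\um=(m_0,\dots,m_s)$, set $B:=\{k\in\{1,\dots,s\}\mid m_k<\l\}$ and $S:=\{k\in\{1,\dots,s\}\mid m_k=\l\}$, and put $P:=p+A(\uc)$. The case $s=0$ is trivial, so assume $s\ge 1$. First I would rewrite system~(\ref{System i's}) (reading the superscripts on the right-hand columns as $i^1_{m_1},\dots,i^s_{m_s}$, matching the left): a point $(d_1,\dots,d_s)\in\ZZ^s$ lies in $\Psi_{\um}(p;\uc)$ precisely when $i^k_{m_k}\le d_k<i^k_{m_k+1}$ for $1\le k\le s$ and $P-i^0_{m_0+1}<d_1+\dots+d_s\le P-i^0_{m_0}$, recalling the convention $i^k_{\l+1}=\infty$ (so for $k\in S$ the $k$-th constraint reads $d_k\ge i^k_\l$, and when $m_0=\l$ the lower ``slab'' constraint on the sum disappears).

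Next I would bound $P$ from below. Since $\uc\in\Psi_{\un}(q)$, the defining system~(\ref{System jotes}) forces $c_k\ge j^k_{n_k}\ge j^k_1$ for $1\le k\le r$; as $0\le a_1\le\dots\le a_r$ this gives $A(\uc)=\sum_k a_k c_k\ge\sum_k a_k j^k_1$, so together with the hypothesis $p\ge i^0_\l+\dots+i^s_\l-\sum_k a_k j^k_1-1$ we obtain
\[
P\;=\;p+A(\uc)\;\ge\;i^0_\l+\dots+i^s_\l-1.
\]
Now to the count: fixing $(d_k)_{k\in B}$ in the box $\prod_{k\in B}[\,i^k_{m_k},i^k_{m_k+1}\,)$ — whose cardinality depends only on $\um$ and the filtration data of $E$, not on $(p,q,\uc)$ — and substituting $d_k=i^k_\l+e_k$ with $e_k\ge 0$ for $k\in S$, the number of admissible $(e_k)_{k\in S}$ with $\sum_{k\in S}e_k\le M$ equals $\binom{x+|S|}{|S|}$, viewed as a polynomial in $x:=M-\sum_{k\in S}i^k_\l-\sum_{k\in B}d_k$, exactly when $x\ge-|S|$. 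Summing over the box and subtracting for the two ends of the slab, $|\Psi_{\um}(p;\uc)|=C(P-i^0_{m_0})-C(P-i^0_{m_0+1})$, where $C(M):=\sum_{(d_k)_{k\in B}}\binom{\,M-\sum_{k\in S}i^k_\l-\sum_{k\in B}d_k+|S|\,}{|S|}$ and $C(-\infty):=0$.

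The crux — and the only place an inequality is used — is to verify that for the relevant arguments $M$ (namely $M\ge P-i^0_\l$, since $i^0_{m_0},i^0_{m_0+1}\le i^0_\l$) the quantity $x$ stays $\ge-|S|$ for every $(d_k)_{k\in B}$ in the box, so that $C(M)$ genuinely equals the polynomial displayed above. Using $d_k\le i^k_{m_k+1}-1\le i^k_\l-1$ for $k\in B$ one gets $x\ge P-\sum_{k=0}^s i^k_\l+|B|\ge|B|-1\ge-|S|$, by the displayed bound on $P$ and $|B|+|S|=s\ge1$. Hence $C$ agrees with a polynomial in $M$ on the needed range, so $|\Psi_{\um}(p;\uc)|$ is a polynomial in $P$; substituting $P=p+a_1c_1+\dots+a_rc_r$ exhibits it as an element of $\CC[p,c_1,\dots,c_r]$. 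I expect the only friction to be the bookkeeping of the degenerate configurations ($B=\emptyset$, collapsed steps $i^k_{m_k}=i^k_{m_k+1}$, or $m_0=\l$), all of which collapse $|\Psi_{\um}(p;\uc)|$ either to a constant or to the single term $C(P-i^0_\l)$; the threshold estimate above is the real content.
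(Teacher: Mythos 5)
Your argument is correct and is essentially the paper's own: both proofs count $\Psi_{\um}(p;\uc)$ by separating the coordinates into bounded boxes (those $k$ with $m_k<\l$) and unbounded directions (those with $m_k=\l$), reduce the latter to a stars-and-bars binomial coefficient, and use the hypothesis $p\ge i^0_\l+\dotsb+i^s_\l-a_1j^1_1-\dotsb-a_rj^r_1-1$ together with $c_k\ge j^k_{n_k}\ge j^k_1$ (hence $A(\uc)\ge a_1j^1_1+\dotsb+a_rj^r_1$) to place the binomial argument in the range where the lattice-point count agrees with a polynomial. The differences are only organizational: you replace the paper's two-case analysis on $m_0$ by the uniform inclusion--exclusion $C(P-i^0_{m_0})-C(P-i^0_{m_0+1})$ over the slab and you track the exact agreement threshold $x\ge-|S|$ (a harmless slip: the constraint after substituting $d_k=i^k_\l+e_k$ is $\sum_{k\in S}e_k\le x$, not $\le M$).
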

\begin{proof}
By Lemma \ref{Lemma:cota t}, we assume that $m_{k}=\l$ for at least one integer $0\leq k\leq s$.  Let us first assume that $m_{0}=m_{k_{1}}=\dotsc=m_{k_{t}}=\l$ for $0\leq t\leq s$. Up to permutation of indices we can assume that $k_{1}=1,\dotsc, k_{t}=t$. 
Then $|\Psi_{\um}(p;\uc)|$ is the number of integer solutions of the system (\ref{System i's}), which is equivalent to
\[
\left.
\begin{array}{r@{}l@{\hspace{-1mm}}}
p-i_{\l}^{0}\geq d_{1}+\dotsb+d_{s}- A(\uc)&> -\infty\\
\infty >d_{1}&\geq i_{\l}^{1}\\
\vdots\\
\infty > d_{t}&\geq i_{\l}^{t}\\
i_{m_{t+1}+1}^{t+1} > d_{t+1} &\geq i_{m_{t+1}}^{t+1}\\
\vdots\\
i_{m_{s}+1}^{s}>d_{s}&\geq i_{m_{s}}^{s}\\
\end{array}
\right\}.
\]
After the change of variables $e_{k}=d_{k}-i_{\l}^{k}$ for $1\leq k\leq t$, it is immediate to see that
\[
|\Psi_{\um}(p;\uc)|=
\hspace{-7mm}
\sum_{d_{t+1} =i_{m_{t+1}}^{t+1}}^{i_{m_{t+1}+1}^{t+1}-1}
\hspace{-4mm}
\dotsb
\hspace{-2mm}
\sum_{d_{s}=i_{m_{s}}^{s}}^{i_{m_{s}+1}^{s}-1}
\hspace{-3mm}
\binom{t+p-i_{\l}^{0}-i_{\l}^{1}-\dotsb-i_{\l}^{t}+\hspace{-1mm}A(\uc)-d_{t+1}-\dotsb-d_{s}}
{t}
\]
which is a polynomial. Indeed, fix any integer $(s-t)-$uple $(d_{t+1},\dotsc,d_{s})$ such that $i_{m_{t+1}+1}^{t+1} > d_{t+1} \geq i_{m_{t+1}}^{t+1},\dotsc,
i_{m_{s}+1}^{s}>d_{s}\geq i_{m_{s}}^{s}$. The bound on $p$ in the hypothesis, implies that $p-i_{\l}^{0}-i_{\l}^{1}-\dotsb-i_{\l}^{t}+A(\uc)-d_{t+1}-\dotsb-d_{s}\geq0$, which implies that
\[
\binom{t+p-i_{\l}^{0}-i_{\l}^{1}-\dotsb-i_{\l}^{t}+A(\uc)-d_{t+1}-\dotsb-d_{s}}
{t}
\]
is a polynomial in $\CC[p,c_{1},\dotsc,c_{r}]$, and the claim follows.

To finish the proof of the Lemma, we need to consider the case $1\leq m_{0}<\l$. Up to permutation of indices we assume that $m_{1}=\dotsc=m_{t}=\l$, but in this case $1\leq t \leq s$. After the change of variables $e_{1}:=d_{1}+\dotsb+d_{s}$, the system (\ref{System i's}) is equivalent to
\[
\left.
\begin{array}{@{}r@{}l@{}}
p-i_{m_{0}}^{0}\geq e_{1}- A(\uc)&>p-i_{m_{0}+1}^{0}\\
e_{1}-i_{\l}^{1}-d_{t+1}-\dotsb-d_{s} \geq d_{2}+\dotsb+d_{t}&>-\infty\\
\infty > d_{2}&\geq i_{\l}^{2}\\
\vdots\\
\infty > d_{t}&\geq i_{\l}^{t}\\
i_{m_{t+1}+1}^{t+1} > d_{t+1} &\geq i_{m_{t+1}}^{t+1}\\
\vdots\\
i_{m_{s}+1}^{s}>d_{s}&\geq i_{m_{s}}^{s}\\
\end{array}
\right\}.
\]
Thus, 
\[
\begin{array}{l>{\displaystyle}l}
|\Psi_{\um}(p;\uc)|\hspace{2mm}=&\hspace{-6mm}
\sum_{e_{1}=p-i_{m_{0}}^{0}+A(\uc)+1}^{p-i_{m_{0}^{0}+1}+A(\uc)}
\hspace{1mm}
\sum_{d_{t+1}=i_{m_{t+1}}^{t+1}}^{i_{m_{t+1}+1}^{t+1}-1}
\hspace{-4mm}
\dotsb\\
&\dotsb\sum_{d_{s}=i_{m_{s}}^{s}}^{i_{m_{s}+1}^{s}-1}
\binom{t-1+e_{1}-i_{\l}^{1}-\dotsb-i_{\l}^{t}-d_{t+1}-\dotsb-d_{s}}
{t-1}.
\end{array}
\]
Since $e_{1}-i_{\l}^{1}-\dotsb-i_{\l}^{t}-d_{t+1}-\dotsb-d_{s}>p-i_{m_{0}}^{0}-i_{\l}^{1}-\dotsb-i_{\l}^{t}-d_{t+1}-\dotsb-d_{s}+A(\uc)\geq p - i_{\l}^{0}-\dotsb-i_{\l}^{s}+a_{1}j_{1}^{1}+\dotsb+a_{r}j_{1}^{r}\geq0$, then
\[
P(e_{1})\hspace{2mm}=\hspace{-4mm}
\sum_{d_{t+1}= i_{m_{t+1}}^{t+1}}^{i_{m_{t+1}+1}^{t+1}-1}
\hspace{-4mm}
\dotsb
\sum_{d_{s}=i_{m_{s}}^{s}}^{i_{m_{s}+1}^{s}-1}
\binom{t-1+e_{1}-i_{\l}^{1}-\dotsb-i_{\l}^{t}-d_{t+1}-\dotsb-d_{s}}
{t-1}.
\]
is a polynomial in $\CC[e_{1}]$. Therefore, 
\[
|\Psi_{p}(\um,\uc)|\hspace{2mm}=\hspace{-4mm}
\sum_{e_{1}=p-i_{m_{0}}^{0}+A(\uc)+1}^{p-i_{m_{0}+1}^{0}+A(\uc)}\hspace{-8mm}P(e_{1})\hspace{1mm}=\hspace{-2mm}
\sum_{k=0}^{i_{m_{0}}^{0}-i_{m_{0}+1}^{0}}
\hspace{-4mm}P(k+p-i_{m_{0}}^{0}+A(\uc)),
\]
which is a finite sum of polynomials in $\CC[p,c_{1},\dotsc,c_{r}]$.
\end{proof}

\begin{definition}
The $n$th Bernoulli polynomial is defined recursively as
\[
B_n(x) = \sum_{k=0}^n \binom{n}{k} B_{n-k} x^k,
\]
where $B_{0}:=1$. In particular Bernoulli polynomials satisfy the well known Faulhaber identity:
\[
\sum_{k=0}^{q}k^{t}=\frac{B_{t+1}(q+1)-B_{t+1}(1)}{t+1}.
\]
\end{definition}

\begin{lemma}\label{Lemma:integracio sumatori}
If $P(q,e_{1},\dotsc,e_{k})\in\CC[q,e_{1},\dotsc,e_{k}]$ is a polynomial, then 
$\displaystyle{
\sum_{\substack{
e_{1}+\dotsc+e_{k}\leq q\\
e_{i}\geq0
}}
P(q,e_{1},\dotsc,e_{k})
}$ 
is a polynomial in $\CC[q]$.
\end{lemma}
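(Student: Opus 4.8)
The plan is to prove this by induction on the number $k$ of summation variables. First I would treat the base case $k=1$: the sum $\sum_{e_{1}=0}^{q} P(q,e_{1})$ is, for each fixed power $e_{1}^{t}$ appearing in $P$, handled by the Faulhaber identity recorded just above, which expresses $\sum_{e_{1}=0}^{q} e_{1}^{t}$ as $\bigl(B_{t+1}(q+1)-B_{t+1}(1)\bigr)/(t+1)$, a polynomial in $q$. Since $P(q,e_{1})$ is a $\CC$-linear combination of monomials $q^{a}e_{1}^{t}$, linearity of the sum gives that $\sum_{e_{1}=0}^{q}P(q,e_{1})\in\CC[q]$.

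For the inductive step, suppose the statement holds for $k-1$ variables. Given $P(q,e_{1},\dotsc,e_{k})$, I would isolate the innermost variable $e_{k}$ and write
\[
\sum_{\substack{e_{1}+\dotsb+e_{k}\leq q\\ e_{i}\geq 0}}P(q,e_{1},\dotsc,e_{k})
=\sum_{\substack{e_{1}+\dotsb+e_{k-1}\leq q\\ e_{i}\geq 0}}\;\sum_{e_{k}=0}^{q-e_{1}-\dotsb-e_{k-1}}P(q,e_{1},\dotsc,e_{k}).
\]
The inner sum $\sum_{e_{k}=0}^{N}P(q,e_{1},\dotsc,e_{k})$ with upper limit $N=q-e_{1}-\dotsb-e_{k-1}$ is, again by Faulhaber applied termwise to the powers of $e_{k}$, a polynomial $Q$ in $q$, $e_{1},\dotsc,e_{k-1}$ and $N$; substituting $N=q-e_{1}-\dotsb-e_{k-1}$ shows the inner sum equals a polynomial $\widetilde{Q}(q,e_{1},\dotsc,e_{k-1})\in\CC[q,e_{1},\dotsc,e_{k-1}]$. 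The outer sum is then $\sum_{e_{1}+\dotsb+e_{k-1}\leq q,\,e_{i}\geq0}\widetilde{Q}(q,e_{1},\dotsc,e_{k-1})$, which lies in $\CC[q]$ by the inductive hypothesis.

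The only delicate point is bookkeeping: one must make sure that after substituting the variable upper limit $N=q-e_1-\dots-e_{k-1}$ into the Faulhaber expression, the result is genuinely a polynomial with no remaining dependence that is merely piecewise-polynomial. This is fine because $B_{t+1}(x)$ is a true polynomial in $x$, so $B_{t+1}(N+1)$ is a polynomial in $N$, and the composition with the affine substitution $N\mapsto q-e_1-\dots-e_{k-1}$ stays polynomial; there is no case distinction to track since the summation range $0\le e_k\le N$ is automatically consistent with the constraint region. Thus the induction goes through and $\sum_{e_1+\dots+e_k\le q,\,e_i\ge0}P(q,e_1,\dots,e_k)\in\CC[q]$, as claimed.
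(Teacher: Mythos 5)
Your proposal is correct and follows essentially the same route as the paper: induction on the number of variables, with the base case and the inner sum over the last variable both handled by the Faulhaber identity, so that the inner sum becomes a polynomial in $q,e_{1},\dotsc,e_{k-1}$ (a combination of Bernoulli polynomials evaluated at the affine upper limit), to which the inductive hypothesis applies. The bookkeeping point you flag (that substituting the variable upper limit stays polynomial) is exactly what the paper uses implicitly.
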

\begin{proof}
We proceed by induction over $k$. If $k=1$, we want to see that the function $f(q):=\sum_{e_{1}=0}^{q}P(q,e_{1})$ is a polynomial. Notice that we can write $P(q,e_{1})=P_{0}(q)+P_{1}(q)e_{1}+\dotsb+P_{d}(q)e_{1}^{d}$ with $P_{i}\in\CC[q]$, where $d=\deg_{e_{1}}P$. Hence,
\[
f(q)=P_{0}(q)\sum_{e_{1}=0}^{q}1+P_{1}(q)\sum_{e_{1}=0}^{q}e_{1}+\dotsb+P_{d}(q)\sum_{e_{1}=0}^{q}e_{1}^{d},
\]
which is a $\CC[q]-$linear combination of the Bernoulli polynomials $B_{1}(q),\dotsc,$ $B_{d+1}(q)$.
Now we assume, by induction, that the result is true for some $k\geq 1$. We want to see that the function
\[
f(q):=\sum_{\substack{
e_{1}+\dotsc+e_{k+1}\leq q\\
e_{i}\geq0
}}
P(q,e_{1},\dotsc,e_{k+1})
\]
is a polynomial. We can write $f(q)$ as follows:
\[
\begin{array}{l>{\displaystyle}l}
f(q):=&
\hspace{-7mm}
\sum_{\substack{
e_{1}+\dotsc+e_{k+1}\leq q\\
e_{i}\geq0
}}
\hspace{-7mm}
P(q,e_{1},\dotsc,e_{k+1})=
\hspace{-6mm}
\sum_{\substack{
e_{1}+\dotsc+e_{k}\leq q\\
e_{i}\geq0
}}\hspace{-4mm}
\sum_{e_{k+1}=0}^{q-(e_{1}+\dotsb+e_{k})}
\hspace{-6mm}
P(q,e_{1},\dotsc,e_{k},e_{k+1})=\\[8mm]
&\hspace{-6mm}
\sum_{\substack{
e_{1}+\dotsc+e_{k}\leq q\\
e_{i}\geq0
}}\hspace{-4mm}
g(q,e_{1},\dotsc,e_{k}).
\end{array}
\]
By induction it is enough to see that $g(p,q,e_{1},\dotsc,e_{k})$ is a polynomial. We write $P(q,e_{1},\dotsc,e_{k},e_{k+1})=P_{0}(q,e_{1},\dotsc,e_{k})+P_{1}(q,e_{1},\dotsc,e_{k})e_{k+1}+\dotsb+
P_{d}(q,e_{1},\dotsc,e_{k})e_{k+1}^{d}$ with $P_{i}\in\CC[q,e_{1},\dotsc,e_{k}]$ and $d=\deg_{e_{k+1}}P$. Therefore,
\[
\begin{array}{l>{\displaystyle}l}
g(q,e_{1},\dotsc,e_{k})=&
P_{0}(q,e_{1},\dotsc,e_{k})\hspace{-7mm}
\sum_{e_{k+1}=0}^{q-(e_{1}+\dotsb+e_{k})}
\hspace{-7mm}
1+
P_{1}(q,e_{1},\dotsc,e_{k})\hspace{-7mm}
\sum_{e_{k+1}=0}^{q-(e_{1}+\dotsb+e_{k})}
\hspace{-7mm}e_{k+1}
+\dotsb\\
&\dotsb+
P_{d}(q,e_{1},\dotsc,e_{k})
\hspace{-7mm}
\sum_{e_{k+1}=0}^{q-(e_{1}+\dotsb+e_{k})}
\hspace{-7mm}e_{k+1}^{d}.
\end{array}
\]
Again, this is a $\CC[q,e_{1},\dotsc,e_{k}]-$linear combination of Bernoulli polynomials $B_{1}(q-(e_{1}+\dotsb+e_{k})+1),\dotsc,B_{d+1}(q-(e_{1}+\dotsb+e_{k})+1)$, and hence $g\in\CC[q,e_{1},\dotsc,e_{k}]$.
\end{proof}

We are now ready to prove the main result of the paper.

\begin{theorem}\label{Theorem:Bound hilbert polynomial}
Let $E$ be a multigraded reflexive module of rank $\l$ with associated filtrations as in Notation \ref{Notation:Reflexives on V(s,a1,...,ar)}. For $p\geq i_{\l}^{0}+\dotsb+i_{\l}^{s}-a_{1}j_{1}^{1}-\dotsb-a_{r}j_{1}^{r}-1$ and $q\geq j_{\l}^{0}+\dotsb+j_{\l}^{r}-1$, the Hilbert function $h_{\Gamma E}(p,q)$ of $\Gamma E$ is a polynomial.
\end{theorem}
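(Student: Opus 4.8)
\emph{The plan.} The plan is to start from Proposition \ref{Proposition:Hilbert from polytope}(iii), which in the present setting (the Cox ring of $V_{s}(a_{1},\dots,a_{r})$, with the $\Sigma$--family indices split according to $\Sigma(1)=\{\rho_{0},\dots,\rho_{s},\eta_{0},\dots,\eta_{r}\}$ as in Section \ref{Section:Applications}) reads
\[
h_{\Gamma E}(p,q)=\sum_{\um,\un}D(\um,\un)\,\bigl|\Psi_{(\um,\un)}(p,q)\bigr|,
\]
a finite sum over $\um=(m_{0},\dots,m_{s})$ and $\un=(n_{0},\dots,n_{r})$ with entries in $\{1,\dots,\l\}$, where $D(\um,\un)=\dim(F^{0}_{m_{0}}\cap\dots\cap F^{s}_{m_{s}}\cap G^{0}_{n_{0}}\cap\dots\cap G^{r}_{n_{r}})$ is independent of $(p,q)$. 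So it is enough to show that each $\bigl|\Psi_{(\um,\un)}(p,q)\bigr|$ is a polynomial in $(p,q)$ on the stated range. First we would use the slicing (\ref{Sliced system solutions}): it makes $\Psi_{(\um,\un)}(p,q)$ empty whenever $\Psi_{\un}(q)=\emptyset$ or $\Psi_{\um}(p;\uc)=\emptyset$ for all $\uc$, so by Lemmas \ref{Lemma:cota s} and \ref{Lemma:cota t} the corresponding term vanishes identically on this range unless $m_{k}=\l$ for some $k$ and $n_{k}=\l$ for some $k$; we then restrict to such $(\um,\un)$.

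\emph{Reducing to a lattice-point count over a moving simplex.} For such a pair, Lemma \ref{Lemma:Polinomi i's} (applicable since $p$ is in range) gives a fixed polynomial $Q_{\um}\in\CC[p,c_{1},\dots,c_{r}]$ with $|\Psi_{\um}(p;\uc)|=Q_{\um}(p,\uc)$ for every $\uc\in\Psi_{\un}(q)$, whence by (\ref{Sliced system solutions})
\[
\bigl|\Psi_{(\um,\un)}(p,q)\bigr|=\sum_{\uc\in\Psi_{\un}(q)}Q_{\um}(p,\uc).
\]
We would then read off from (\ref{System jotes}) that $\uc\in\Psi_{\un}(q)$ precisely when each $c_{i}$ ($1\le i\le r$) lies in $[\,j^{i}_{n_{i}},\,j^{i}_{n_{i}+1}-1\,]$ (a finite interval if $n_{i}<\l$, a half-line if $n_{i}=\l$) and $c_{1}+\dots+c_{r}$ lies in an interval positioned near $q$ of length $j^{0}_{n_{0}+1}-j^{0}_{n_{0}}$ (a finite window if $n_{0}<\l$, the half-line $(\,\cdot\,,\,q-j^{0}_{\l}\,]$ if $n_{0}=\l$). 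Setting $e_{i}:=c_{i}-j^{i}_{n_{i}}\ge 0$, $S:=\{i:n_{i}=\l\}$, $T:=\{i:n_{i}<\l\}$, and $\widetilde{Q}(p,\ue):=Q_{\um}(p,e_{1}+j^{1}_{n_{1}},\dots,e_{r}+j^{r}_{n_{r}})\in\CC[p,e_{1},\dots,e_{r}]$, the sum becomes a finite sum over the boundedly many values of $(e_{i})_{i\in T}$ of inner sums of $\widetilde{Q}$ over $\{(e_{i})_{i\in S}\in\ZZ_{\ge 0}^{|S|}:\sum_{i\in S}e_{i}\le M\}$ when $n_{0}=\l$, or over the same region with $\sum_{i\in S}e_{i}$ confined to a bounded set of values when $n_{0}<\l$; here $M$ is affine in $q$ and in the fixed $(e_{i})_{i\in T}$.

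\emph{Summing over the simplex.} To conclude, when $n_{0}=\l$ we would invoke Lemma \ref{Lemma:integracio sumatori}, whose proof via the Faulhaber identity remains valid with coefficients in any $\mathbb{Q}$--algebra, in particular $\CC[p,(e_{i})_{i\in T}]$: it makes $\sum_{(e_{i})_{i\in S}\ge 0,\,\sum e_{i}\le M}\widetilde{Q}$ a polynomial in $M$, hence in $(p,q)$ after substituting the affine expression for $M$; if $S=\emptyset$ the inner sum is a single term, present for $(p,q)$ in range because the hypotheses force the constraint to hold, so again polynomial. When $n_{0}<\l$ we would write each equality--constrained sum $\sum_{\sum e_{i}=M'}\widetilde{Q}$ as a difference of two $\le$--constrained ones and sum the boundedly many values $M'$ in the window, each affine in $q$ and $(e_{i})_{i\in T}$, again obtaining a polynomial in $(p,q)$. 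Summing over the finitely many $(e_{i})_{i\in T}$, and then over the finitely many $(\um,\un)$, would give that $h_{\Gamma E}(p,q)$ is a polynomial on the stated range.

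\emph{Main obstacle.} The hard part will be the bookkeeping in the last two steps: describing the solution set of (\ref{System jotes}) correctly as a box in the bounded coordinates crossed with a simplex-type region whose size grows with $q$, and organizing the summation so that the genuinely unbounded directions are exactly those governed by Lemma \ref{Lemma:integracio sumatori}, carrying $p$ and the bounded coordinates along as polynomial parameters, while turning the equality constraint that appears when $n_{0}<\l$ into a difference of inequality-constrained sums.
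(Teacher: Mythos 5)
Your proposal is correct and follows essentially the same route as the paper: expand $h_{\Gamma E}(p,q)$ via Proposition \ref{Proposition:Hilbert from polytope} and the slicing (\ref{Sliced system solutions}), discard terms using Lemmas \ref{Lemma:cota s} and \ref{Lemma:cota t}, replace $|\Psi_{\um}(p;\uc)|$ by the uniform polynomial of Lemma \ref{Lemma:Polinomi i's}, and sum over $\Psi_{\un}(q)$ with Lemma \ref{Lemma:integracio sumatori}, treating $p$ and the bounded coordinates as polynomial parameters. The only divergence is cosmetic: in the case $n_{0}<\l$ you difference two $\le$--constrained simplex sums, whereas the paper substitutes $e_{1}=c_{1}+\dotsb+c_{r}$ and sums over the finite window $q-j^{0}_{n_{0}+1}<e_{1}\leq q-j^{0}_{n_{0}}$; the two devices are equivalent.
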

\begin{proof}
For any integers $p,q$, the Hilbert function of $E$ at $(p,q)$ is $h_{E}(p,q)=\dim E_{(p,q)}=\sum_{m\in\ZZ^{s+r}}[\Gamma E_{(p,q)}]_{m}$. By Remark \ref{Remark:HilbertFunction} and (\ref{Sliced system solutions}),
\[
h_{\Gamma E}(p,q)=\sum_{
\substack{
	1\leq m_{0},\dotsc,m_{s}\leq\l\\
	1\leq n_{0},\dotsc,n_{r}\leq\l
}}|\Psi_{(\um,\un)}(p,q)|D(\um,\un)
\]
\[
=\sum_{1\leq n_{0},\dotsb,n_{r}\leq \l}\sum_{\bc\in\Psi_{\un}(q)}
\sum_{1\leq m_{0},\dotsc,m_{s}\leq \l}|\Psi_{\um}(p;\uc)|D(\um,\un)
\]
where $D(\um,\un):=\dim(F_{m_{0}}^{0}\cap \dotsb\cap F_{m_{s}}^{s}\cap G_{n_{0}}^{0}\cap \dotsb\cap G_{n_{r}}^{r})$. Rearranging the sum it yields:
\[
h_{\Gamma E}(p,q)=
\sum_{1\leq m_{0},\dotsc,m_{s}\leq \l}
\sum_{1\leq n_{0},\dotsc,n_{r}\leq \l}
\left[\sum_{\uc\in\Psi_{\un}(q)}
|\Psi_{\um}(p;\uc)|\right]
D(\um,\un).
\]
Hence, the proof reduces to see that if $p\geq i_{\l}^{0}+\dotsb+i_{\l}^{s}-a_{1}j_{1}^{1}-\dotsb-a_{r}j_{1}^{r}-1$ and $q\geq j_{\l}^{0}+\dotsb+j_{\l}^{r}-1$, the sum
\[
f(\um,\un,p,q):=\sum_{\uc\in\Psi_{\un}(q)}
|\Psi_{\um}(p;\uc)|
\]
is a polynomial in $\CC[p,q]$ for any $\um,\un$. Since $q\geq j_{\l}^{0}+\dotsb+j_{\l}^{r}-1$, Lemma \ref{Lemma:cota t} applies. In particular, we can assume that there is an integer $0\leq k\leq r$ such that $n_{k}=\l$, otherwise $f(\um,\un,p,q)=0$. Let us first suppose that $n_{0}=\l$ and reordering if necessary, $n_{1}=\dotsb=n_{t}=\l$ for $0\leq t\leq r$. Then the system (\ref{System jotes}) is equivalent to
\[
\left.
\begin{array}{rl@{}}
q-j_{\l}^{0}\geq c_{1}+\dotsb+c_{r}&> -\infty\\
\infty>c_{1}&\geq j_{\l}^{1}\\
\vdots\\
\infty>c_{t}&\geq j_{\l}^{t}\\
j_{n_{t+1}+1}^{t+1}>c_{t+1}&\geq j_{n_{t+1}}^{t+1}\\
\vdots\\
j_{n_{r}+1}^{r}>c_{r}&\geq j_{n_{r}}^{r}
\end{array}
\right\}.
\]
Applying the change of variables $e_{i}=c_{i}-j_{\l}^{i}$ for $1\leq i\leq t$ to this system we obtain
\[
f(\um,\un,p,q)=
\hspace{-3mm}
\sum_{c_{r}=j_{n_{r}}^{r}}^{j_{n_{r}+1}^{r}-1}
\hspace{-2mm}
\dotsb
\hspace{-2mm}
\sum_{c_{r}=j_{n_{t+1}}^{t+1}}^{j_{n_{t+1}+1}^{t+1}-1}
\hspace{-20mm}
\sum_{
\substack{
e_{1}+\dotsb+e_{t}=0\\
e_{i}\geq0
}
}^{\hspace{20mm}q - j_{\l}^{0}-\dotsb-j_{\l}^{t}-c_{t+1}-\dotsb-c_{r}}
\hspace{-24mm}
|\Psi_{p}(\um;e_{1}+j_{\l}^{1},\dotsc,e_{t}+j_{\l}^{t},c_{t+1},\dotsc,c_{r})|.
\]
By Lemma \ref{Lemma:Polinomi i's}, since $p\geq i_{\l}^{0}+\dotsb+i_{\l}^{s}-a_{1}j_{1}^{1}-\dotsb-a_{r}j_{1}^{r}-1$, then $|\Psi_{p}(\um;e_{1}+j_{\l}^{1},\dotsc,e_{t}+j_{\l}^{t},c_{t+1},\dotsc,c_{r})|$ is a polynomial in $\CC[p,e_{1},\dotsc,e_{t},$ $c_{t+1},\dotsc,c_{r}]$. Therefore, applying Lemma \ref{Lemma:integracio sumatori},
\[
P(p,q,c_{t+1},\dotsc,c_{r}):=
\hspace{-13mm}
\sum_{
\substack{
e_{1}+\dotsb+e_{t}\leq q - j_{\l}^{0}-\dotsb-j_{\l}^{t}-c_{t+1}-\dotsb-c_{r}\\
e_{i}\geq0
}
}
\hspace{-13mm}
|\Psi_{p}(\um;e_{1}+j_{\l}^{1},\dotsc,e_{t}+j_{\l}^{t},c_{t+1},\dotsc,c_{r})|
\]
is a polynomial in $\CC[p,q,c_{t+1},\dotsc,c_{r}]$, which implies that $f(\um,\un,p,q)\in\CC[p,q]$. To finish the proof we need to consider the case $1\leq n_{0}<\l$. Without loss of generality we can assume that $n_{1}=\dotsb=n_{t}=\l$ but now with $1\leq t\leq r$. Thus, after the change of variables $e_{1}=c_{1}+\dotsb+c_{r}$, the system (\ref{System jotes}) is equivalent to
\[
\left.
\begin{array}{r@{}l@{}}
q-j_{n_{0}}^{0}\geq e_{1} &> q-j_{n_{0}+1}^{0}\\
e_{1}-c_{t+1}-\dotsb-c_{r}-j_{\l}^{1}\geq c_{2}+\dotsb+c_{t}+&> -\infty \\
\infty > c_{2}&\geq j_{\l}^{2}\\
\vdots\\
\infty>c_{t}&\geq j_{\l}^{t}\\
j_{n_{t+1}+1}^{t+1}>c_{t+1}&\geq j_{n_{t+1}}^{t+1}\\
\vdots\\
j_{n_{r}+1}^{r}>c_{r}&\geq j_{n_{r}}^{r}
\end{array}
\right\}.
\]
Defining $e_{i}:=c_{i}-j_{\l}^{i}$ for $2\leq i\leq t$ we obtain $f(\um,\un,p,q)=$
\[
\begin{array}{@{}>{\displaystyle}l@{\hspace{-7mm}}>{\displaystyle}l@{}}
\hspace{-3mm}
\sum_{c_{r}=j_{n_{r}}^{r}}^{j_{n_{r}+1}^{r}-1}
\hspace{-2mm}
\dotsb
\hspace{-3mm}
\sum_{c_{r}=j_{n_{t+1}}^{t+1}}^{j_{n_{t+1}+1}^{t+1}-1}
\hspace{-4mm}
\sum_{\hspace{3mm}e_{1}=q-j_{n_{0}+1}^{0}+1}^{q-j_{n_{0}}^{0}}
\hspace{-7mm}&
\sum_{
\substack{
e_{2}+\dotsb+e_{t}=0\\
e_{i}\geq0
}
}^{ e_{1} - j_{\l}^{1}-\dotsb-j_{\l}^{t}-c_{t+1}-\dotsb-c_{r}}
\hspace{-14mm}
|\Psi_{p}(\um;e_{1}-e_{2}-j_{\l}^{2}-\dotsb\\
&\hspace{14mm}\dotsb-e_{t}-j_{\l}^{t},e_{2}+j_{\l}^{2},\dotsc,e_{t}+j_{\l}^{t},c_{t+1},\dotsc,c_{r})|.
\end{array}
\]
Analogously as in the previous case, this last equality proves that since $p\geq i_{\l}^{0}+\dotsb+i_{\l}^{s}-a_{1}j_{1}^{1}-\dotsb-a_{r}j_{1}^{r}-1$, then $f(\um,\un,p,q)\in\CC[p,q]$.
\end{proof}

The following corollary bounds the multigraded regularity index of a saturated multigraded module.

\begin{corollary}
Let $E$ be a multigraded module of rank $\l$ with associated filtrations as in Notation \ref{Notation:Reflexives on V(s,a1,...,ar)}. Then
\[
\omega_{\Gamma E}:=\left\{(p,q)\in\ZZ^{2}\mid 
\begin{array}{@{}l@{}l@{}}
p&\geq i_{\l}^{0}+\dotsb+i_{\l}^{s}-a_{1}j_{1}^{1}-\dotsb-a_{r}j_{1}^{r}-1\\
q&\geq j_{\l}^{0}+\dotsb+j_{\l}^{r}-1
\end{array}
\right\}
\]
is an upper bound of the multigraded regularity index $r.i.(\Gamma E)$ of $\Gamma E$.
\end{corollary}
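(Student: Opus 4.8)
The plan is to deduce this from Theorem~\ref{Theorem:Bound hilbert polynomial} together with the description of the multigraded Hilbert polynomial in Proposition~\ref{Proposition:Hilbert regularity}; the only genuine work is to identify the polynomial produced by the theorem with $P_{\Gamma E}$. Recall that ``$\omega_{\Gamma E}$ is an upper bound of $\ri(\Gamma E)$'' means $\omega_{\Gamma E}\subseteq\ri(\Gamma E)$, and that by definition $\ri(\Gamma E)=\{(p,q)\mid h_{\Gamma E}(p,q)=P_{\Gamma E}(p,q)\}$.

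First I would note that $\omega_{\Gamma E}$ is, by its very definition, a translate of the quadrant $\NN^2$: it consists of the $(p,q)$ with $p\geq i_\l^0+\dots+i_\l^s-a_1j_1^1-\dots-a_rj_1^r-1$ and $q\geq j_\l^0+\dots+j_\l^r-1$, so $\omega_{\Gamma E}=(p_0,q_0)+\NN^2$ for suitable $p_0,q_0\in\ZZ$; in particular it is Zariski dense in $\CC^2$. One also checks that $\cK=\bigcap_{\sigma}\NN\cA_{\hat\sigma}=\NN^2$ in the present situation, since each maximal cone $\sigma_{ij}$ contributes $\NN\{(1,0),(-a_j,1)\}\supseteq\NN^2$, with equality for $j=0$; thus $\omega_{\Gamma E}$ is $\cK$-stable, consistent with the general shape of regularity-index regions.

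By Theorem~\ref{Theorem:Bound hilbert polynomial} there is a polynomial $Q\in\CC[p,q]$ with $h_{\Gamma E}(p,q)=Q(p,q)$ for every $(p,q)\in\omega_{\Gamma E}$. To pin down $Q$ I would fix a $\Cl(X)$-graded minimal free resolution $0\to\bigoplus_iR(-\alpha_{i,t})\to\dots\to\bigoplus_iR(-\alpha_{i,0})\to E\to0$ of the finitely generated module $E$ and apply Proposition~\ref{Proposition:Hilbert regularity}(ii): on the region $D:=\bigcap_{i,j}(\alpha_{i,j}+\cK)$, again a translate of $\NN^2$, all local cohomology $H^\bullet_B(E)$ vanishes. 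Then Proposition~\ref{Proposition:Hilbert regularity}(iii) gives $h_E=P_E$ on $D$, while the vanishing of $H^0_B(E)$ and $H^1_B(E)$ on $D$ together with the four-term exact sequence of Proposition~\ref{Proposition:quasi-coherent sheaves}(iv) gives $E_{(p,q)}\cong(\Gamma E)_{(p,q)}$ there; combined with the equality $P_E=P_{\Gamma E}$ noted right after Proposition~\ref{Proposition:Hilbert regularity}, this yields $h_{\Gamma E}=P_{\Gamma E}$ on $D$. Now $D\cap\omega_{\Gamma E}$ is a nonempty translate of $\NN^2$, hence Zariski dense, and on it $Q=h_{\Gamma E}=P_{\Gamma E}$; two polynomials in two variables agreeing on a Zariski-dense set coincide, so $Q=P_{\Gamma E}$ in $\CC[p,q]$. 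Therefore $h_{\Gamma E}(p,q)=Q(p,q)=P_{\Gamma E}(p,q)$ for all $(p,q)\in\omega_{\Gamma E}$, i.e. $\omega_{\Gamma E}\subseteq\ri(\Gamma E)$.

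I expect the only delicate point to be the identification $Q=P_{\Gamma E}$, i.e. verifying that the locus on which $h_{\Gamma E}$ and $P_{\Gamma E}$ are \emph{a priori} known to agree really meets $\omega_{\Gamma E}$ in a Zariski-dense set; here this is painless because both loci are translates of $\NN^2$, whose intersection is again such a translate. Everything else is routine bookkeeping with the explicit region $\omega_{\Gamma E}$ and the exact sequences collected in Section~\ref{Section:Preliminaries}.
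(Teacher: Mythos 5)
Your proof is correct and takes essentially the same route as the paper: the corollary is deduced directly from Theorem \ref{Theorem:Bound hilbert polynomial}, whose polynomial must coincide with $P_{E}=P_{\Gamma E}$. The paper's proof is a one-line citation of that theorem treating this identification as immediate, while you spell it out via Proposition \ref{Proposition:Hilbert regularity} and Zariski density of the overlap region --- a harmless elaboration of the same argument.
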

\begin{proof}
It follows strightforward from Theorem \ref{Theorem:Bound hilbert polynomial} since if $(p,q)\in\omega_{\Gamma E}$, then $h_{\Gamma E}(p,q)=P_{E}(p,q)$.
\end{proof}

The following example shows the sharpness of this bound.
\begin{example}
Let $R=\CC[x_{0},x_{1},y_{0},y_{1}]$ be the Cox ring of the Hirzebruch surface $\cH_{3}$ as in Example \ref{Example:Hirzebruch}. Let us fix $\{e_{1},e_{2},e_{3}\}$ a basis of $\CC^{3}$ and we consider the following subspaces:
\[
\begin{array}{lll}
F^{0}_{1}=\langle 3e_{1}+3e_{2}+e_{3}\rangle &
F^{1}_{1}=\langle9e_{1}+4e_{2}+8e_{3}\rangle\\[1mm]
F^{0}_{2}=F^{0}_{1}+\langle4e_{1}+2e_{3}\rangle &
F^{1}_{2}=F^{1}_{1}+\langle2e_{1}+8e_{2}+8e_{2}\rangle\\[1mm]
G^{0}_{1}=\langle6e_{2}+3e_{3}\rangle&
G^{1}_{1}=\langle4e_{1}+4e_{3}\rangle\\[1mm]
G^{0}_{2}=G^{0}_{1}+\langle7e_{1}+e_{2}+3e_{3}\rangle&
G^{1}_{2}=G^{1}_{1}+\langle9e_{1}+8e_{2}\rangle.
\end{array}
\]
 Let $E$ be a (normalized) rank $3$ reflexive module with set of filtrations
\[
\begin{array}{ll}
E^{\rho_{0}}(-3,-1,0;F^{0}_{1},F^{0}_{2},\CC^{3})&E^{\rho_{1}}(-9,-3,0;F^{1}_{1},F^{1}_{2},\CC^{3})\\[1mm]
E^{\eta_{0}}(-4,-1,0;G^{0}_{1},G^{0}_{2},\CC^{3})&E^{\eta_{1}}(-2,-1,0;G^{1}_{1},G^{1}_{2},\CC^{3}).
\end{array}
\]
Using the package ToricVectorBundles of Macaulay2 \cite{M2}, we obtain the following values for $H^{1}(\cH_{3},\widetilde{E}(p,q))$ for $-4\leq q\leq 4$ and $2\leq q\leq 10$:
\[
\left(
\begin{array}{@{}ccccccccc@{}}
 3 & 2 & 1 & \tikzmark{left}{0} & 0 & 0 & 0 & 0 & 0 \\
 3 & 2 & 1 & 0 & 0 & 0 & 0 & 0 & 0 \\
 3 & 2 & 1 & 0 & 0 & 0 & 0 & 0 & 0 \\
 3 & 2 & 1 & 0 & 0 & 0 & 0 & 0 & 0 \\
 3 & 2 & 1 & 0 & 0 & 0 & 0 & 0 & 0 \\
 3 & 2 & 1 & 0 & 0 & 0 & 0 & 0 & \tikzmark{right}{0} \\
 11 & 10 & 9 & 8 & 8 & 8 & 8 & 8 & 8 \\
 24 & 24 & 24 & 24 & 24 & 24 & 24 & 24 & 24 \\
 31 & 33 & 35 & 37 & 39 & 41 & 43 & 45 & 47 \\
\end{array}
\right).
  \Highlight[first]
\]
The region highlighted in the figure corresponds to the bound in Theorem \ref{Theorem:Bound hilbert polynomial} for this case: $p\geq 5$ and $q\geq -1$.

Notice that the filtration corresponding to $E$ is {\em general}, that is a filtration such that any intersection of the form $\bigcap_{k=0}^{r}E^{k}_{n_{k}}$ has minimal dimension. Using Macaulay2, we have checked in many cases that the bound of Theorem \ref{Theorem:Bound hilbert polynomial} is sharp for modules with general filtrations.
\end{example}

\begin{corollary}
Let $E$ be a multigraded reflexive module of rank $\l$ and $\delta_{E}\in\ZZ^{2}$, such that $E(\delta_{E})$ is the normalized module as in Notation \ref{Notation:Reflexives on V(s,a1,...,ar)}. Then, for $p\geq -a_{1}(j_{1}^{1}-j_{\l}^{1})-\dotsb-a_{r}(j_{1}^{r}-j_{\l}^{r})-1$ and $q\geq -1$, the Hilbert function of $\Gamma E(\delta_{E})$, $h_{\Gamma E(\delta_{E})}$ coincides with the Hilbert polynomial $P_{E}$.
\end{corollary}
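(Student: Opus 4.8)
The plan is to apply Theorem~\ref{Theorem:Bound hilbert polynomial} (equivalently, the Corollary just above defining $\omega_{\Gamma E}$) not to $E$ but to the normalized module $E(\delta_{E})$, and then to unwind what the bounds in that statement become after normalization. So the entire argument is the bookkeeping of how twisting by $\delta_{E}$ affects the filtration indices.

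First I would record the $\Sigma$-family of $E(\delta_{E})$. Writing $\delta_{E}=[D]$ for the divisor $D=\sum_{\tau}k^{\tau}_{\l}D_{\tau}$ and applying Proposition~\ref{Proposition:twist} with this representative, the filtration of $E(\delta_{E})$ on a ray $\tau$ is $E^{\tau}(k^{\tau}_{1}-k^{\tau}_{\l},\dotsc,k^{\tau}_{\l-1}-k^{\tau}_{\l},0;E^{\tau}_{1},\dotsc,E^{\tau}_{\l})$, exactly as in Notation~\ref{Notation:Reflexives on V(s,a1,...,ar)}. In the renamed indices this says that, for $E(\delta_{E})$, one has $i^{t}_{\l}=0$ for $0\leq t\leq s$ and $j^{u}_{\l}=0$ for $0\leq u\leq r$, while every bottom step of $E(\delta_{E})$ equals the corresponding step of $E$ shifted by the same integer; in particular the $\eta_{u}$-bottom step of $E(\delta_{E})$ is $j^{u}_{1}-j^{u}_{\l}$. (Here the quantities $j^{u}_{1}-j^{u}_{\l}$ are the ones of the original $E$, and the key point is simply that they are invariant under twisting, since a twist shifts all steps of a given ray's filtration by a common integer.)

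Now I would invoke Theorem~\ref{Theorem:Bound hilbert polynomial} for $E(\delta_{E})$. Substituting $i^{t}_{\l}=0$, $j^{u}_{\l}=0$, and replacing each $j^{u}_{1}$ by $j^{u}_{1}-j^{u}_{\l}$, the hypothesis $p\geq i^{0}_{\l}+\dotsb+i^{s}_{\l}-a_{1}j^{1}_{1}-\dotsb-a_{r}j^{r}_{1}-1$ becomes $p\geq -a_{1}(j^{1}_{1}-j^{1}_{\l})-\dotsb-a_{r}(j^{r}_{1}-j^{r}_{\l})-1$, and $q\geq j^{0}_{\l}+\dotsb+j^{r}_{\l}-1$ becomes $q\geq -1$. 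On this region Theorem~\ref{Theorem:Bound hilbert polynomial} gives that $h_{\Gamma E(\delta_{E})}(p,q)$ agrees with a polynomial; since $\Gamma E(\delta_{E})$ is $B$-saturated one has $H^{0}_{B}(\Gamma E(\delta_{E}))=H^{1}_{B}(\Gamma E(\delta_{E}))=0$, and combining Proposition~\ref{Proposition:Hilbert regularity}(iii) with the vanishing of the higher $H^{i}_{B}$ on a translated positive cone identifies that polynomial with the multigraded Hilbert polynomial (the one the statement writes $P_{E}$, up to the twist by $\delta_{E}$ already encoded in the normalization). Equivalently, the set just obtained is precisely $\omega_{\Gamma E(\delta_{E})}$, so the claim is exactly the preceding Corollary applied to $E(\delta_{E})$.

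The statement is a direct specialization, so there is no real obstacle; the only points demanding care are the translation-invariance of $j^{u}_{1}-j^{u}_{\l}$ (so that the stated bound is well defined in terms of the data of $E$) and the matching of the eventual polynomial value of $h_{\Gamma E(\delta_{E})}$ with $P_{E}$, i.e.\ checking that the normalization region coincides with $\omega_{\Gamma E(\delta_{E})}$.
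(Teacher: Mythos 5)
Your proposal is correct and follows exactly the paper's route: the paper's proof is a one-line appeal to Theorem \ref{Theorem:Bound hilbert polynomial} together with the normalized filtration of $E(\delta_{E})$ from Notation \ref{Notation:Reflexives on V(s,a1,...,ar)}, which is precisely the substitution $i^{t}_{\l}=0$, $j^{u}_{\l}=0$, $j^{u}_{1}\mapsto j^{u}_{1}-j^{u}_{\l}$ that you carry out. Your extra remarks (translation-invariance of $j^{u}_{1}-j^{u}_{\l}$ and the identification of the limiting polynomial with $P_{E}$ via saturation) just make explicit what the paper leaves implicit.
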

\begin{proof}
It is strightforward from Theorem \ref{Theorem:Bound hilbert polynomial} and the filtration of $E(\delta_{E})$ presented in Notation \ref{Notation:Reflexives on V(s,a1,...,ar)}.
\end{proof}

\begin{corollary}
Let $E$ be a multigraded reflexive module of rank $\l$ presented as a quotient
\[
\bigoplus_{i=1}^{b} R(\un_{i},\um_{i})\xrightarrow{\phi} \bigoplus_{j=1}^{c}R(\unu_j,\umu_j)\rightarrow \!E\rightarrow\! 0,\; 
\left\{
\begin{array}{@{\!}l@{}l}
(\un_{i},\um_{i})\!&=\!(n_{i}^{0}\!,\dotsc\!,n_{i}^{s},m_{i}^{0}\!,\dotsc\!,m_{i}^{r})\\
(\unu_{i},\umu_{i})\!&=\!(\nu_{i}^{0}\!,\dotsc\!,\nu_{i}^{s},\mu_{i}^{0}\!,\dotsc\!,\mu_{i}^{r}).
\end{array}
\right.
\]
If $p\geq -\min_{i}\{n_{i}^{0}\}-\dotsb-\min_{i}\{n_{i}^{s}\}+a_{1}\max_{j}\{\mu_{j}^{1}\}+\dotsb+a_{r}\max_{j}\{\mu_{j}^{r}\}-1$ and $q\geq -\min_{i}\{m_{i}^{0}\}-\dotsb-\min_{i}\{m_{i}^{r}\}-1$, then the Hilbert function of $E$ $h_{E}(p,q)$ is a polynomial.
\end{corollary}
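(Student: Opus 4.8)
The plan is to reduce the statement to Theorem~\ref{Theorem:Bound hilbert polynomial} by reading the relevant extremal jumping numbers of the Klyachko filtrations of $E$ off the given presentation via Lemma~\ref{Lemma:Bound reflexive filtration}. As a preliminary step I would replace $h_{E}$ by $h_{\Gamma E}$: since $R$ is a normal domain and $V(B)\subset\Spec R$ has codimension at least $2$ ($X$ being smooth and complete), a finitely generated reflexive $R$-module is torsion-free and satisfies Serre's condition $S_{2}$, so $\operatorname{depth}_{B}E\ge 2$; hence $H^{0}_{B}(E)=H^{1}_{B}(E)=0$ and $E\cong\Gamma E$ by Proposition~\ref{Proposition:quasi-coherent sheaves}(iv), in particular $h_{E}=h_{\Gamma E}$. (Alternatively, one reads this off the fact that the $\Sigma$-family attached to a reflexive module in Notation~\ref{Notation:Reflexives on V(s,a1,...,ar)} encodes exactly $\Gamma E$.) So it suffices to prove that $h_{\Gamma E}(p,q)$ agrees with a polynomial on the region described in the statement.

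Next I would name the filtration data of $E$ as in Notation~\ref{Notation:Reflexives on V(s,a1,...,ar)}: jumping numbers $i^{t}_{1}\le\cdots\le i^{t}_{\l}$ along the rays $\rho_{t}$ for $0\le t\le s$, and $j^{u}_{1}\le\cdots\le j^{u}_{\l}$ along the rays $\eta_{u}$ for $0\le u\le r$. Since $R$ is $\ZZ^{s+r+2}$-graded with at most one-dimensional graded components, the presentation map $\phi$ is given by a matrix of monomials, so Lemma~\ref{Lemma:Bound reflexive filtration} applies; matching the first $s+1$ coordinates of the multidegrees appearing in the presentation with the rays $\rho_{0},\dots,\rho_{s}$ and the last $r+1$ coordinates with $\eta_{0},\dots,\eta_{r}$, it yields
\[
i^{t}_{\l}\le -\min_{i}\{n^{t}_{i}\}\ \ (0\le t\le s),\qquad j^{u}_{\l}\le -\min_{i}\{m^{u}_{i}\}\ \ (0\le u\le r),\qquad j^{u}_{1}\ge -\max_{j}\{\mu^{u}_{j}\}\ \ (1\le u\le r).
\]

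Finally I would feed these inequalities into the thresholds of Theorem~\ref{Theorem:Bound hilbert polynomial}. Because $0\le a_{1}\le\cdots\le a_{r}$, the last batch of inequalities gives $-a_{u}j^{u}_{1}\le a_{u}\max_{j}\{\mu^{u}_{j}\}$, so summing everything up,
\[
\sum_{t=0}^{s}i^{t}_{\l}-\sum_{u=1}^{r}a_{u}j^{u}_{1}\le -\sum_{t=0}^{s}\min_{i}\{n^{t}_{i}\}+\sum_{u=1}^{r}a_{u}\max_{j}\{\mu^{u}_{j}\},\qquad \sum_{u=0}^{r}j^{u}_{\l}\le -\sum_{u=0}^{r}\min_{i}\{m^{u}_{i}\}.
\]
Consequently every $(p,q)$ satisfying the two inequalities of the statement also satisfies $p\ge i^{0}_{\l}+\cdots+i^{s}_{\l}-a_{1}j^{1}_{1}-\cdots-a_{r}j^{r}_{1}-1$ and $q\ge j^{0}_{\l}+\cdots+j^{r}_{\l}-1$, whence Theorem~\ref{Theorem:Bound hilbert polynomial} shows $h_{E}(p,q)=h_{\Gamma E}(p,q)$ agrees with $P_{E}(p,q)$ there. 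I expect the only delicate point to be this coordinate bookkeeping inside Lemma~\ref{Lemma:Bound reflexive filtration}: one must assign to each ray the correct component of the presentation multidegrees and check that the signs ($a_{u}\ge 0$) make the substitution into the thresholds of Theorem~\ref{Theorem:Bound hilbert polynomial} go in the right direction; once that is set up, the corollary is an immediate consequence of Theorem~\ref{Theorem:Bound hilbert polynomial} together with Lemma~\ref{Lemma:Bound reflexive filtration}.
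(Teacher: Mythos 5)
Your proof is correct and follows essentially the same route as the paper, whose argument is exactly to feed the bounds of Lemma \ref{Lemma:Bound reflexive filtration} for $i_{\l}^{k}$, $j_{\l}^{k}$ and $j_{1}^{k}$ (with the same ray-to-coordinate matching and the sign check $a_{u}\geq 0$) into the thresholds of Theorem \ref{Theorem:Bound hilbert polynomial}. Your preliminary identification $h_{E}=h_{\Gamma E}$ via saturation of reflexive modules is a detail the paper leaves implicit, and it is handled correctly.
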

\begin{proof}
We apply Theorem \ref{Theorem:Bound hilbert polynomial} with the bounds of Lemma \ref{Lemma:Bound reflexive filtration} for $i_{\l}^{k}$, $j_{\l}^{k}$ and $j_{1}^{k}$.
\end{proof}


\end{document}